\newtheorem{thm}{Theorem}[section]
\newtheorem{prop}[thm]{Proposition}
\newtheorem{definition}[thm]{Definition}
\newtheorem{lemma}[thm]{Lemma}
\newtheorem{remark}[thm]{Remark}
\newtheorem{cor}[thm]{Corollary}
\numberwithin{equation}{section} 
\def\BB{\mathcal{B}}
\def\CC{\mathcal{C}}
\def\HH{\mathcal{H}}
\def\KK{\mathcal{K}}
\def\LL{\mathcal{L}}
\def\OO{\mathcal{O}}
\def\SS{\mathcal{S}}
\def \TT{\mathcal {T}}
\def\UU{\mathcal{U}}
\def\WW{\mathcal{W}}
\newcommand\bC{{\mathbb C}}
\newcommand\bF{{\mathbb F}}
\newcommand\bR{{\mathbb R}}
\newcommand\bZ{{\mathbb Z}}
\def\real{\mathbb{R}}
\def\integer{\mathbb{Z}}
\def\complex{\mathbb{C}}
\def\id{\mathrm{id}}
\newcommand{\musrb}{\mu_{\tiny{\mbox{SRB}}}}
\def\supp{\mathrm{supp}\, }
\def\D{\mathrm {d}}
\def\E{e}
\newcommand\Erg{{\operatorname{Erg}}}
\newcommand{\bi}{{\bf{i}}}
\newcommand{\bj}{{\bf{j}}}
\newcommand{\tT}{\widetilde T}
\newcommand{\tO}{\widetilde O}
\newcommand{\htop}{h_{\scriptsize{\mbox{top}}}}
\newcommand{\Fix}{\operatorname{Fix}}
\newcommand\optop{\operatorname{top}}
\newcommand\mult{\operatorname{mult}}
\newcommand\opleb{\operatorname{Leb}}
\newcommand\diam{\operatorname{diam}}
\begin{document}
\title[Thermodynamic formalism for piecewise expanding maps]{Thermodynamic formalism  for 
piecewise expanding maps in finite dimension}
\author{Viviane Baladi\textsuperscript{(1),(2)} \and Roberto Castorrini\textsuperscript{(3)}}
\address{(1) Sorbonne Universit\'e and Universit\'e Paris Cit\'e, CNRS,   Laboratoire de Probabilit\'es, Statistique et Mod\'elisation,
	 F-75005 Paris, France}
\address{(2) Institute for Theoretical Studies, ETH, 8092 Z\"urich,
	Switzerland} 

\email{baladi@lpsm.paris}
\address{(3) Dipartimento di Matematica, Universit\`a di Pisa, Largo B. Pontecorvo 5, 56127 Pisa,  Italy}
\email{roberto.castorrini@dm.unipi.it}
\date{March 2, 2024, to appear DCDS}
\thanks{It is a pleasure to thank Y.~Guedes-Bonthonneau, J.~Buzzi, M.~Demers, D.-J.~Feng, M.~Misiurewicz,
T.~Persson,  D.~Smania,
 I.P.~Toth, and, especially, M.~J\'ez\'equel,  for helpful comments. Part of this work was done  while VB was visiting Lund University
on a Knut and Alice Wallenberg fellowship in 2021,    ITS-ETHZ Zurich in 2022 and 2023, and the Mittag Leffler Institute in 2023. This work was started when RC
was working at LPSM, CNRS, Paris.  We are grateful to the referees for a careful reading.
This research is partially supported
by the European Research Council (ERC) under the European Union's Horizon 2020 research and innovation programme (grant agreement No 787304).  RC is partially supported by
the research project PRIN 2017S35EHN of the Italian Ministry of Education and Research. RC acknowledges membership to the GNFM/INDAM.} 

\begin{abstract}
For $\bar \alpha >1$ and $\alpha \in (0, \bar \alpha]$,
we study  weighted transfer operators associated to a piecewise expanding $\CC^{\bar \alpha}$ map $T$ on a compact manifold
of dimension $d\ge 1$, and a piecewise $\CC^\alpha$ weight $g$,
acting on  Sobolev spaces.
We  bound the essential spectral radius  in terms of a topological
pressure for a subadditive potential.
 Under  a new small
boundary pressure condition, we improve the estimate
by establishing a  variational principle for  piecewise expanding maps and subadditive potentials. 
\end{abstract}

\keywords{Transfer operator. Piecewise expanding map. Subadditive thermodynamic formalism. Topological pressure. } 
\subjclass[2020]{Primary 37C30; Secondary  37C83}

\maketitle

\section{Introduction}

\subsection{Functional Approach to Ergodic Properties}
For $M$  a connected compact Riemannian manifold and $T:M\to M$,  the {\it functional analytic} approach to statistical properties of the dynamics $T$
consists in
finding a Banach space $\BB$ of functions
or distributions on $M$ such that the  {\it (Ruelle) transfer operator }
\[
\LL_{T,g} \varphi(x)= \sum_{Ty=x} g(y)\varphi(y)\, ,\quad
x \in M\, , 
\] 
weighted by a suitable function $g:M\to \bC$, and defined
initially on a subset of measurable functions $\varphi :M\to \complex$,
extends to a bounded operator on  $\BB$ on which its
\emph{essential spectral radius}\footnote{The radius of the smallest
disc outside of which the spectrum of $\LL_{T,g}$
consists of isolated eigenvalues of finite multiplicity.
Although the essential spectral radius 
depends on $\BB$, these eigenvalues
in general do not, see e.g. \cite[Thm~2.3]{Go2}.} 
 is smaller than its spectral radius (``quasicompactness'').

If $g$ is positive and $T$ is mixing, the  spectral picture can sometimes
be strengthened as follows:
The transfer operator has a positive maximal eigenvalue,
which is the exponential 
$\E^{P(\log g)}$ of the topological pressure
of $\log g$. This eigenvalue is simple, and the rest of the spectrum is
contained in a smaller disc. This
``spectral gap'' often implies  existence, uniqueness and  decay of correlations (for suitable observables) of the equilibrium
state of $\log g$, i.e. the invariant measure 
maximising $h_\mu+\int \log g \D\mu$ (where $h_\mu$ is
the Kolmogorov entropy). For $g=|\det DT|^{-1}$, we have
in many cases that $\E^{P_{\optop}(\log g)}=1$, and
the equilibrium state of $\log g$ is the physical (SRB) measure.

\smallskip

Another desirable goal (besides  finding a Banach space
on which the essential spectral radius is small) is  to relate  the isolated eigenvalues of the transfer operator 
with the poles of a 
dynamical zeta function defined  by
assuming that $\Fix T^n=\{x \in M\mid T^n(x)=x\}$
is finite for each fixed $n$,  and setting (in the sense of formal power series)
$
\zeta_{T, g}(z)=\exp \sum_{n=1}^{\infty} \frac{z^{n}}{n} 
\sum_{x \in \Fix T^n} g^{(n)}(x)$.
We hope that the
Milnor--Thurston  kneading operator approach of \cite{BaRu} (see \cite{BaTsu1} or \cite[\S 3.2]{Babook2} for an implementation
to smooth dynamics in arbitrary dimension)
can be applied to piecewise expanding or piecewise hyperbolic dynamics in
arbitrary dimension.

\subsection{(Piecewise) Expanding Case}

For
\emph{expanding and piecewise  expanding} maps $T$
(with smallest expansion denoted by $\lambda>1$),  the relevant $\BB$ is a space of functions. In the {\it smooth} expanding case, the pioneering bounds
of Ruelle \cite{Ru2} on the essential spectral radius,
taking $\BB$ the space of H\"older functions, were shown to be optimal by Gundlach--Latushkin
\cite{GuLa}, who reformulated them using a variational (thermodynamic)
expression.  
The 
{\it piecewise} expanding theory is  fairly complete in one-dimension, usually 
taking $\BB$ the set $BV$ of functions of bounded variation (piecewise monotonicity
is enough there,
see e.g. \cite{Babook1, BaRu}).

For \emph{higher dimensional}
piecewise expanding dynamics,
quasicompactness  (and even ergodic properties such as existence of the SRB measure)
can fail  \cite{Tsu, Buz3} if one does not 
make \emph{additional assumptions on the  ``complexity at the beginning''} $D^b(T)$ (also called
``entropy multiplicity,'' 
see \eqref{complexity-cond}): The works \cite{BaGo9,BaGo10,Tho,Liv13}
require  some version of  $D^b (T) < \log \lambda$
(``hyperbolicity beats  complexity at the beginning'')  to  bound the essential spectral radius. Cowieson \cite{Cow} proved that  $D^b(T)=0$  for $T$
in an open and dense subset of piecewise $\CC^{\bar\alpha}$ expanding maps,
and that $D^b(T)=0$ implies a spectral gap for the operator associated to $g=|\det DT|^{-1}$
acting on  $\BB=BV$ (see \cite{Saus} for a different choice of $\BB$).
For arbitrary piecewise $\CC^\alpha$
weights $g$, in any dimension, Thomine \cite{Tho}, inspired by \cite{BaGo9}, obtained\footnote{Using a tower construction,
	Buzzi et al. \cite{BPS, BM} had previously
	found mild additional conditions implying exponential decay of correlations
	for H\"older observables.} a bound (see \eqref{DDT}) on the essential spectral
radius on classical Sobolev spaces
$\BB=\HH^t_p$, for $1<p<\infty$ and $0<t<\min\{\alpha, 1/p\}$.
Even if $D^b(T)=0$ and $g= |\det DT|^{-1}$, Thomine's bound ensures
quasicompactness  only if either 
$T$ satisfies some pinching condition (for example if $T$ is a $\beta$ transformation)
or $p$ is close
enough to $1$, in order to control
the exponential growth of the number of preimages of $T^n$ (the ``complexity
at the end'', see Remark~\ref{Dend}).
Liverani \cite[(3)]{Liv13} found sufficient conditions  ensuring a spectral gap\footnote{The assumption
\cite[(3)]{Liv13} that ``hyperbolicity beats  complexity'' is similar to   requiring $D^b_{\{\nu_n\}}<0$, see \eqref{complexity-condw}.}
on $\BB=BV$ 
 for  $d\ge 1$ and  piecewise expanding maps  having possibly  infinitely many domains of continuity
and (controlled) blowup of derivatives, with $g=|\det DT|^{-1}$.
 The only results  linking the
poles of the zeta function to the spectrum
of $\LL_{T,g}$ for higher dimensional\footnote{In the more general setting of
non-degenerate  entropy expanding maps, Buzzi \cite[\S6]{Buz10} found a domain of
meromorphy for $\zeta_{T,g}(z)$ if $g\equiv 1$ (but no spectral interpretation of the poles).
}
piecewise expanding maps were obtained by Buzzi and Keller \cite{BuKe}, for   piecewise affine  maps and $g=|\det DT|^{-1}$.

\subsection{(Piecewise) Hyperbolic Case}

 The pioneering work \cite{BlKeLi} 
introduced \emph{aniso\-tro\-pic} Banach spaces $\BB$ of {\it distributions,} adapted to 
Anosov diffeomorphisms $T$.
These spaces can be classified
in two categories (see \cite{Ba17}): \emph{Geometric spaces,}
using integration over stable submanifolds,
and \emph{micro-local spaces,}
using the Fourier transform.
The nature of geometric norms (taking
a supremum over a class of submanifolds) does not seem to be
amenable to the Milnor--Thurston approach.
In the 
\emph{smooth} hyperbolic case,
the best known estimate  for the essential spectral radius is 
obtained for micro-local spaces by \emph{thermodynamic formalism} techniques,
as a variational expression
for a subadditive topological pressure
\cite{BaTsu1}.

Many physically relevant models, such as dispersive billiards are  uniformly hyperbolic, but only piecewise smooth. 
The geometric approach \cite{BlKeLi, GoLi} has been
used to study the SRB measure of  piecewise hyperbolic maps with controlled complexity in  dimension two (\cite{DeLi}),
but also the SRB measure and other equilibrium states of Sinai billiard maps and flows  (\cite{DeZa, BaDeLi,BaDe1, BaDe2, BCD}). 
It has recently been extended to the random
Lorentz gas,
via  Birkhoff cones \cite{DeLi2}.
Estimates on the essential spectral
radius for micro-local spaces  were\footnote{Even  if $D^b(T)=0$, 
\cite{BaGo9, BaGo10}
do not always give
 quasicompactness if $g\ne  |\det DT|^{-1}$.} obtained (\cite{BaGo9, BaGo10}) for weighted piecewise hyperbolic surface
maps. (The results there do not apply
 to Sinai billiards, for which the derivative is unbounded.)
We are not aware of any result
linking the poles of dynamical zeta functions with
the spectrum of transfer operators for piecewise
hyperbolic maps.

A modification $\UU^{t,s}_p$ of the micro-local spaces of \cite{BaGo10}
 suitable in the piecewise smooth setting has been proposed in
\cite{Ba17, Ba18}.  J\'ez\'equel, observed that, even if
 $D^b(T)=0$ and $g= |\det DT|^{-1}$, 
the bound on the essential
spectral radius of \cite[Thm~4.1]{Ba17} may not imply quasicompactness:
 For
 a linear automorphism $T$ of the two-torus with expanding eigenvalue $\Lambda >1$,
 the essential spectral radius of the operator for $|\det DT|^{-1} \equiv 1$
 acting on the space $\UU^{t,s}_p$ from \cite{Ba17} is bounded by
 $r_0(t,s,p)= \frac{\Lambda}{\Lambda^{1/p}}
 \max \{\Lambda^{-t}, \Lambda^{t+s}\}$
 (the  factor $\Lambda$ comes from a naive use of ``complexity at the end'').
 To ensure that
 characteristic functions  are bounded multipliers \cite[Thm~3.1]{Ba18}, we must take $-1+ 1/p <s<-t<0$,
so that
 $r_0(t,s,p)>1$. Our hope is that a ``thermodynamic'' control of the complexity at the end (using fragmentation
and reconstitution, as below) will replace $r_0(t,s,p)$ by
$r(t,s,p)= \frac{\Lambda^{\tilde p} }{\Lambda^{1/p}}
 \max \{\Lambda^{-t}, \Lambda^{t+s}\}$ for some $\tilde p\in (0,1)$
 (probably $(p-1)/p$, in view of \eqref{DDT} and 
 \cite[Thm~2.5]{BaGo10}), allowing  parameters for which
$r(t,s,p)<1$.

\subsection{Outline of the Results}

We consider the toy-model case of
weighted  piecewise expanding maps  and classical (isotropic) Sobolev spaces $\HH^t_p$, just like in \cite{Tho}, but
thermodynamic estimates replace the 
``complexity at the end'':
Our first main result, Theorem~\ref{thm:ess-bound}, 
 gives an unconditional bound   \eqref{eq:ess-bound1} 
on the essential
spectral radius in terms of the topological pressure 
of a subadditive potential related to $\log |g|$.
(We recover the optimal bound \cite{BaKe, Kel} in dimension one. We improve
on Thomine's bound \cite{Tho} in the generic small boundary entropy case $D^b(T)=0$. 
If $g=|\det DT|^{-1}$, our bound is analogous to Liverani's bound \cite[Thm~1, Lemma~3.1]{Liv13}
for the essential spectral radius.
Our bound coincides with   Gundlach--Latushkin's 
\cite{GuLa}
bound on H\"older spaces
if the map and weight are smooth. See Remarks~\ref{d1}, \ref{compLiv}, \ref{Dend2}, \ref{GL},  \ref{condest}.)
Assuming small boundary pressure, a variational principle of Buzzi--Sarig \cite{Buz}
allows us to reformulate  \eqref{eq:ess-bound1} in Corollary~\ref{cor1}.

Next, Theorem~\ref{thm:var-princ} generalises
 this additive variational principle   \cite{Buz}
to a class of subadditive potentials
 (subadditive potentials appear naturally in dynamics, for example
$\log |\det DT|$ in dimension two or higher, see below --- our results are  the piecewise smooth analogue of
 \cite[\S3]{BaTsu1}, see also \cite[App.~B]{Babook2}). 
Combining Theorem~\ref{thm:var-princ} with Theorem~\ref{thm:ess-bound} 
yields Corollary~\ref{lecor}, which gives the variational expression \eqref{eq:ess-bound2}
for the  bound \eqref{eq:ess-bound1},  under 
   a  new subadditive small boundary pressure condition.
Our results are strongest in the SRB case
$g=|\det DT|^{-1}$, letting $1/p> t$ both tend to $1$. 

One of the features of our approach is 
fragmentation-reconstitution Lemma~\ref{lem:frag-rec}, which allows us to conveniently
use a zoom for arbitrary values of our parameter $p$. 
We hope that our results  lay the groundwork for the 
implementation of the ``ultimate'' micro-local Banach space $\UU^{t,s}_p$
from \cite{Ba17, Ba18}
in the  setting of 
piecewise hyperbolic systems,  
giving  also information on 
zeta functions.

\subsection{Outline of the Paper}
The paper is organised as follows:  In Section~\ref{sec:setting}, after defining our class of
piecewise $\CC^{\bar\alpha}$ expanding maps $T$
and piecewise $\CC^{\alpha}$ weights $g$, we state
 our two main results: Theorem~\ref{thm:ess-bound} on the essential spectral radius of the weighted transfer operator
 $\LL_g$ and Theorem~\ref{thm:var-princ} on the subadditive variational principle. 
 We  state and prove Corollary~\ref{cor1} and Corollary~\ref{lecor}, which follow
 from Theorem~\ref{thm:ess-bound}  and, respectively,  \eqref{laref} and Theorem~\ref{thm:var-princ},
 and give conditional  variational expressions for the bound on the essential spectral radius. In Section~\ref{sec:PartI},  we establish Theorem~\ref{thm:ess-bound}. 
 For this, we  prove the key Lasota--Yorke inequality, Proposition~\ref{lem:main-LY}, in \S\ref{sec:LY} and exploit it in \S\ref{sec:proof-thm}. 
Then, Section~\ref{sec:pressure}
contains the (independent)  proof  of Theorem~\ref{thm:var-princ}, adapting
 \cite{Buz},
using symbolic dynamics
 and a variational principle of Cao--Feng--Huang \cite{CFH} for continuous subadditive potentials.

\section{Setting, Definitions, and Precise Statement of Results}
\label{sec:setting}
\subsection{Piecewise Expanding Maps}
\label{sec:def} 
Throughout, $M$ is a compact  connected $\CC^{\infty}$ Riemannian manifold of dimension $d <\infty$, and $\TT_x$ denotes the tangent space of $M$ at $x\in M$. 
If $M$ has  a boundary we let $\widetilde M$ be
 a compact  connected $\CC^{\infty}$ Riemannian manifold of dimension $d <\infty$ containing the union of $M$ and a small neighbourhood of its boundary,
 otherwise we take $\widetilde M=M$. For noninteger $\beta >1$, we denote by $\CC^\beta$ those
$\CC^{[\beta]}$
 maps  whose partial derivatives of order $[\beta]$
are $(\beta-[\beta])$-H\"older.  
If a map $F$ is invertible on a set $E$, we write $F^{-1}|_{E}=(F|_{E})^{-1}$,
abusing notation.
Fixing real numbers
$\bar  \alpha > 1$ and $0 <\alpha \le \bar \alpha$,
we introduce our object of study:

\begin{definition}\label{object}
A map
$T:M \to M$ is called \emph{piecewise ($\CC^{\bar\alpha}$) expanding} if there exists a finite set  of pairwise disjoint open sets $\OO=\{ O_i \}_{i\in I}$, covering Lebesgue almost all $M$, such that each $\partial O_i$ is a finite union of $\CC^1$ compact hypersurfaces with boundaries, and moreover, for each $i\in  I$ there exists a neighbourhood $\tO_i$ of $\overline O_i$
in $\widetilde M$ and a $\CC^{\bar\alpha}$ 
diffeomorphism $\widetilde T_i: \tO_i\to T_i(\tO_i)\subset \widetilde M$  such that $T|_{O_i}=\widetilde T_i |_{O_i}$,
and, setting 
$\lambda_i(x)= \inf_{v\in \TT_x M\setminus{\{0\}}}\frac{\| D_x \widetilde T_i v \|}{\|v\|}$
for  $i\in I$ and    $x \in \tO_i$,
\begin{equation}
 \lambda=\inf_{i\in I}\inf_{x\in \tO_i}\lambda_i(x)>1\, .
\end{equation}
\end{definition}

\begin{remark}\label{refY}
Using a Taylor series, our assumption implies that, for any $\lambda' \in (1,\lambda)$,  there exists $\epsilon'>0$ such that, refining  $\OO$ to a finite collection $\OO'=\{ O'_i \}_{i\in I'}$
(such that each $\partial O'_i$ is a finite union of $\CC^1$ compact hypersurfaces with boundaries)
of pairwise disjoint open sets of diameter smaller than $\epsilon'$
covering Lebesgue almost all $M$, we have 
$$d(T(x), T(y)) \ge  \lambda' d(x,y), \,\quad
\forall  x, y \in O'_i\,,\,\,\forall i \, .
$$ 
From now on, we assume that such a refinement has
been done, using the notation $\lambda$, $\OO$, $O_i$, $I$, for  $\lambda'$, $\OO'$, $O'_i$,  $I'$.
\end{remark}

 For  $T$ as in Definition~\ref{object} (and Remark~\ref{refY}),  
 we introduce,
for
$n\ge 1$ and  $\bi=(i_0, \ldots , i_{n-1})\in I^n$,
the $n$-cylinder $O_{\bi}$ by
\[
{ 
O_\bi=O_{\left(i_{0}, \ldots, i_{n}\right)}
=\bigcap_{k=0}^{n-1}T^{-k}O_{i_k}\, .
}
\]
Note that for each $n\ge 1$ and almost every $x\in M$, there exists a unique $\bi\in I^n$ such that $x\in O_\bi$.
The corresponding (mod-$0$) partition into $n$-cylinders $O_{\bi}$ is  denoted by $\OO^{(n)}$ (so that $\OO=\OO^{(1)}$). We set $\diam(\OO^{(n)})=\max_{O_{\bi}\in \OO^{(n)}}\diam(O_\bi)$, so  that\footnote{Recall Remark~\ref{refY}, and proceed inductively on $n$, noting that
$\bigcap_{k=0}^{n-1}T^{-k}O_{i_k}=O_{i_0}\bigcap T^{-1}(\bigcap_{k=1}^{n-1}T^{-(k-1)}O_{i_k})$.}
 $\diam(\OO^{(n)})\le \lambda^{-n}\diam(M)$. 

 For $n\ge 1$ and $\bi \in I^n$ with $O_\bi\ne \emptyset$, the map $\widetilde T_{\bi}^n=\widetilde T_{i_{n-1}}\circ \cdots\circ \widetilde T_{i_{0}}$ is defined in the neighbourhood $ \tO_{\bi}=\bigcap_{k=0}^{n-1}\widetilde T^{-k}_{(i_{0}, \ldots, i_{k-1})}\tO_{i_k}$ of $\overline O_{\bi}$ (we put $\overline O_{\bi}=\emptyset$
 if $O_\bi=\emptyset$). Setting
\begin{equation}
\label{singset}
\partial \OO=\cup_i \partial O_i \, , \quad \SS_\OO=\cup_{k\ge 0}T^{-k}(\partial \OO)\, ,
\end{equation}
(note that $\SS_\OO$ has zero Lebesgue measure), we  put 
 \begin{align}\label{eq:hyp-index}
&\nu_{n }(x)=\frac 1{ \inf_{\|v\|=1} \|D_{x}T^{n} v\|}
 \, , \, \, x\in M\setminus \SS_\OO\, ,\\
\label{eq:hyp-indext}
&\tilde \nu_{n ,\bi}(y)=
\|D_{T^n_{\bi}y}(\widetilde T_{\bi}^{n})^{-1} \|\, , 
\, \, y\in \tO_{\bi} \, , \,\, \bi \in I^n; \\ 
 \nonumber &\tilde \nu_n(x)=\sup_{\bi\in I^n: x \in {\tO}_{\bi}}\tilde \nu_{n,\bi}(x) \in [ \nu_n(x), \lambda^{-n }]\, , \, \, x\in M\, .
\end{align}
The function $\nu_n$   is  submultiplicative (multiplicative if $d=1$). We set
\begin{equation}
\nu_*(x) = \lim_{n \to \infty} \nu_n(x)^{1/n}\, , \, x \in M \setminus \SS_\OO \, .
\end{equation}

For $1<p<\infty$ and $t\ge 0$, we denote
by $\HH^t_p=\HH^t_p(M)$   the standard Sobolev space on $M$ (see Section \ref{sec:sobolev}).
We write $r_{\text{ess}}(\LL|_{\BB})$ for
the essential spectral radius of a bounded operator $\LL$ on a Banach space $\BB$.
For a fixed piecewise expanding map $T$, 
a function $f:M\to \complex$ is called piecewise 
continuous if  $f|_{O_i}$ extends  continuously to $\tO_i$,
and $f$ is called piecewise 
$\CC^\alpha$ if  $f|_{O_i}$ is  $\CC^\alpha$.
If $f$ is piecewise 
$\CC^\alpha$, it is easy to see
that each
 $f|_{O_i}$ admits a $\CC^\alpha$ extension $\tilde f_i$ (with\footnote{It is enough to consider real-valued $f$ in charts. Set $\tilde f_i(x)=\inf_{y \in O_i} \{f(y)+ C_i d(x,y)^\alpha\}$.} the same H\"older
constant $C_i$) to  $\tO_i$  for each $i\in \{1,..,I\}$, and we set
\begin{equation}
 \label{defnn} 
 f^{(n)}(x)=\prod_{k=0}^{n-1} f(T^{k}(x)) \, , x\in M\, ,\,\,
\tilde f^{(n)}_\bi=\prod_ {k=0}^{n-1} \tilde f_{i_k}(\widetilde T^k_\bi(x))\,,\, \bi\in I^n\, ,\,  x\in \tO_\bi \, .
\end{equation}

We can now define the transfer operator:

\begin{definition}
Let $T:M\to M$ be  piecewise
$C^{\bar\alpha}$ expanding. Let $g:M\to \bC$ be 
piecewise $\CC^\alpha$. Fix $1\le p\le \infty$. The transfer operator
 $\LL_{g}:L_{p}(M)\to L_{p}(M)$
is 
\[
\LL_{g}(\varphi)(x)=\LL_{T, g}(\varphi)(x)=\sum_{y: T(y)=x} g(y) \varphi(y), \quad x \in M\, ,\,  \, \varphi \in L_p(M)\, .
\]
\end{definition}

Next,  for $\mu \in \Erg(T)$
(the set of ergodic $T$-invariant probability measures on $M$),
 let $h_\mu=h_\mu(T)$ be its
 Kolmogorov entropy  and   $\chi_\mu(DT)$ the smallest Lyapunov
exponent of the linear cocycle $DT$.
If $d=1$,  the Birkhoff ergodic theorem gives
$\chi_\mu(DT)=\int \log |T'| \, d \mu=\int \log |\det DT| \, d \mu$.

\smallskip
Finally,
the \emph{asymptotic  complexity at the beginning  (or entropy multiplicity, see e.g. \cite{Buz97})
$D^b=h_{\mult}$}  of $T$  (and $\OO$) is
\begin{equation}\label{complexity-cond}
D^b=D^b(T)=\lim_{n\to \infty} \frac{1}{n}\log D_n^b(T) \, ,
\end{equation}
where the {\it $n$-complexity at the beginning  $D_{n}^{b}(T)$} of $T$ (and $\OO$) is 
\begin{equation}\label{complexity-condn}
D_n^b=D_{n}^{b}(T)=\max _{x \in M} \#\left\{\bi=\left(i_{0}, \ldots, i_{n-1}\right) \mid  \overline{O_{{\bi}}}\ni x\right\} 
 \, , \,\, n\ge 1 \,\, .
\end{equation}


\begin{remark}[Complexity at the end $D^e$]\label{Dend}
The works \cite{BaGo9,BaGo10,Tho} also use \emph{complexity at
the end} 
\begin{align*}&D_n^e=D_{n}^{e}(T)=\max _{x \in M} \#\{\bi=\left(i_{0}, \ldots, i_{n-1}\right) \mid x \in \overline{T^{n}\left(O_{\bi}\right)}\}\, ,\,\,
n\ge 1 \, , \\
 &D^e=D^e(T)=\lim_{n\to \infty}\frac 1 n \log D^e_n >0\, .
\end{align*}
For $Tx=2x \mod 1$ on $[0,1]$, we have $D^e_n(T)=2^n$, see  Remarks~\ref{somewhere} and~\ref{Dend2},
and Lemma~\ref{finally} for 
more about  complexity at the end. 
Thomine's bound
\cite{Tho} is
\begin{equation}\label{DDT}
r_{\text{ess}}(\LL_g|_{\HH^t_p})\le \lim_{n\to \infty} \biggl (D^b_n(T)^{\frac 1 p}\, D^e_n(T)^{\frac{p-1}p}
\, \sup \biggl |g^{(n)} |\det DT^n|^{\frac 1 p} \nu_n^t \biggr |\biggr ) ^{1/n}\, .
\end{equation}
Choosing $p>1$ close to $1$ allows to control the contribution of $D^e_n$, but
such an exponent $p$ increases the contribution of $|\det DT^n|^{1/p}$. 
 In our estimates, the complexity at the end  will be implicit in the topological pressure 
 $P^*_{\optop}$.
\end{remark}

\subsection{Pressure $P^*_{\optop}$ and Boundary Pressure.}

We define the  pressure of 
 subadditive  sequences  for a piecewise expanding map $T$,
generalising the pressure\footnote{The partition $\OO$ is a topological
generator, but $T$ is not continuous, so classical results do not
apply. We do  not relate here   $P^*_{\optop}$
to  pressure defined via open covers, separated sets, or
spanning sets. See \cite{BaDe1, BaDe2} for  entropy
and pressure via natural topological generators for billiards.}  $P^*_{\optop}(T,\log f, E)$ (for
$E\subset M$ and $f:M\to \real^+_*$) studied e.g. by Buzzi--Sarig \cite{Buz}.

\begin{definition}[Pressure of a Subadditive Potential]
A submultiplicative sequence
for the piecewise $\CC^{\bar\alpha}$ expanding
map $T$ is a sequence $\{f_n:M \to \real^+\mid n\ge 1\}$ of bounded
functions with   $f_{m+n}(x)\le f_{m}(T^{n}( x)) \cdot f_n(x)$ for all $m, n \ge 1$.
For $E\subset M$ measurable, and  $\{ f_n\}$  submultiplicative, the {\it topological pressure of $T$ and 
(the subadditive potential) $\{\log f_n\mid n \ge 1\}$ on $E$} is\footnote{The limit exists in $\real \cup \{-\infty\}$ by subadditivity.}
\begin{equation}
P^*_{\optop}(T,\{\log f_n\mid n \ge 1\}, E)=\lim_{n\to \infty} 
\frac 1n \log 
\sum_{\substack{\bi \in I^n:}\\{E\cap \bar O_{\bi}\neq \emptyset}}
\sup _{\overline{O_{\bi}}} f_{n} \in [-\infty, \infty)\, .
\end{equation}
\end{definition}

 We write  $P^*_{\optop}(\{ \log f_n\},E)$ and $P^*_{\optop}(\{\log f_n\})$
when the meaning is clear.
If $f_n=f^{(n)}$ is multiplicative, we just write  $P^*_{\optop}(\log f,E)$  and $P^*_{\optop}(\log f)$.
The {\it topological entropy of $T$ on a measurable set $E\subset M$} is $P^*_{\optop}(T,0, E)$. 

\smallskip
For all $q\ge 1$, we  have the trivial bound
\begin{align}
\label{forTh0}\E^{\frac{P^*_{\optop}(0,E)}q}
\cdot \lim_{n\to \infty} (\inf  f_n)^{1/n}
\le \E^{\frac{P^*_{\optop}(\{q\log f_{n}\},E)}q}&\le
\E^{\frac{P^*_{\optop}(0,E)}q}
\cdot \lim_{n\to \infty} (\sup  f_n)^{1/n} 
\, .
\end{align}

\begin{remark}[Comparing Pressure with Complexity]\label{somewhere}
Note that $e^{D^e(T)}$ is just the spectral radius of $\LL_1$ on $L_\infty$: Indeed, we have
\begin{equation}
\label{notenough}
\max \LL^n_1(1) = e^{D^e_n(T)}  \le \exp\#\{ \bi \in I^n \mid O_\bi \ne \emptyset\}\, .\mbox{ Thus, } D^e(T)\le P^*_{\optop}(0)\, .
\end{equation}
For the complexity at the beginning, we have
\begin{equation}\label{checkagain}
D^b(T)\le P^*_{\optop}(0,\partial \OO)\, ,
\end{equation}
(Indeed, setting $P^{*}_{n,\optop}(0,\partial \OO)=\#\{\bi \in I^n \mid \partial \OO\cap \bar O_{\bi}\neq \emptyset\}$, we have, 
$
D^b_1(T)\le  P^{*}_{1,\optop}(0,\partial \OO)$ and 
$D_n^b(T)\le \max \{D_{n-1}^b(T), P^{*}_{n,\optop}(0,\partial \OO)\}$ if $n\ge 2$.)

In the other direction, using that $\partial \OO$ has codimension
 one, we have by \cite[Prop.~5.2]{Buz1} (condition (A2) there is satisfied) that
 \begin{equation}\label{Buzbd}
 P^*_{\optop}(-\log |\det DT|,\partial \OO) \le -\log \lambda + D^b(T)\, .
\end{equation}
Set $\Lambda_0=1$, and,\footnote{$\wedge^{k}(A)|_V$ denotes the quantity by which the $k$-dimensional 
volume of $V$ is multiplied by the linear map
$A$.} for $d\ge 2$,
\begin{equation}\label{defLambda}
\Lambda_{d-1}=\exp \limsup_{n \to \infty}
\frac 1 n \log \max_{x, V} \|\wedge^{d-1}(DT^n_x)|_V\|\, ,
\end{equation}
 where the maximum ranges over $(d-1)$-dimensional  subspaces $V$ of $\TT_x M$. If $T$ is piecewise affine, then
\cite[Prop.~4]{Buz97} 
implies that
 \begin{align}
\nonumber  P^*_{\optop}(\log f,\partial \OO) &\le \sup \log f+ P^*_{\optop}(0,\partial \OO)\\
\label{Buzbd2} &\le \sup \log f + \log \Lambda_{d-1} + D^b(T)\, .
\end{align}
\end{remark}

\begin{definition}[Small Boundary Pressure]
Let $T$ be piecewise $\CC^{\bar\alpha}$ expanding. A  submultiplicative sequence $\{f_n:M \to \real^+\mid n\ge 1\}$,
  satisfies the {\it small boundary pressure condition}  if 
\begin{equation}\label{eq:small-boundary}
P^*_{\optop}(T,\{\log f_n\}, \partial \OO)<P^*_{\optop}(T,\{\log f_n\})\, .
\end{equation}
\end{definition}

For  {\it multiplicative sequences} $f_n=f^{(n)}$, associated
to a  piecewise
$\CC^\alpha$ function $f:M\to \real^+$ with $\inf f>0$,   Buzzi and Sarig  (\cite[Thm~1.3]{Buz}) showed
\begin{align}
\nonumber P^*_{\optop}(T,\log f, \partial \OO)&<P^*_{\optop}(T,\log f)
\Longrightarrow \\
\label{laref}&P^*_{\optop}(\log f)=
\sup_{\mu\in \Erg(T)} \bigl\{h_\mu(T)+\int \log f \D\mu\bigr\}
\, .
\end{align}
They also showed that small boundary pressure implies that there are finitely many measures
realising the supremum, and that, if $T$ is strongly topologically
mixing,  this maximum is uniquely attained. In a previous work, Buzzi \cite[Thm~A]{Buz1} had 
established \eqref{laref} for $g=|\det DT|^{-1}$, showing also that
\begin{align}
\nonumber P^*_{\optop}(T,-\log|\det DT| , \partial \OO)<P^*_{\optop}(T,&-\log |\det DT|)
\Longrightarrow \\
\label{larefB}&P^*_{\optop}(-\log |\det DT|)=0
\, .
\end{align}

Theorem~\ref{thm:var-princ} below generalises  \eqref{laref}  to  certain subadditive potentials. 

A useful consequence of \eqref{laref} is the following lemma:

\begin{lemma}[Small Boundary Pressure Implies $P^*_{\optop}(T,0)\le D^e(T)$]\label{finally}
Let $T$ be piecewise $\CC^{\bar\alpha}$ expanding,  let $g:M\to \real$ be piecewise $\CC^\alpha$
with $\inf g>0$.
If  $P^*_{\optop}(T,\log g, \partial \OO)<P^*_{\optop}(T,\log g)$, then 
$P^*_{\optop}(T,\log g)$ is bounded by the logarithm of the spectral radius of $\LL_g$ on
$L_\infty$. (In particular, if  $P^*_{\optop}(T,0, \partial \OO)<P^*_{\optop}(T,0)$
 then $D^e(T)=P^*_{\optop}(T,0)$ by the equality in \eqref{notenough}.)
\end{lemma}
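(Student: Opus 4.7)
The plan is to construct an eigenmeasure $\nu$ of $\LL_g^*$ with eigenvalue $e^P$, where $P := P^*_{\optop}(T,\log g)$, and to test $\|\LL_g^n 1\|_\infty$ against $\nu$. The reverse inequality $\log r(\LL_g|_{L_\infty})\le P$, not claimed by the lemma, is immediate from $\LL_g^n 1(x)\le \sum_{\bi\in I^n:\bar O_\bi\ne\emptyset}\sup_{O_\bi}g^{(n)}$ and needs no hypothesis.

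First I would appeal to a Ruelle--Perron--Frobenius construction (e.g.\ a Schauder--Tychonoff fixed point for the map $\nu\mapsto \LL_g^*\nu/(\LL_g^*\nu)(M)$ on the weak-$*$ compact convex set of Borel probabilities on $M$), combined with \eqref{laref} to identify the eigenvalue, to produce a probability measure $\nu$ on $M$ satisfying $\int \LL_g \varphi\,d\nu = e^P\int\varphi\,d\nu$ for every bounded Borel $\varphi$; iterating, $\int \LL_g^n 1\,d\nu = e^{nP}$ for every $n\ge 1$. Next, the trivial estimate $\|\LL_g^n 1_{\partial\OO}\|_\infty\le \sum_{\bi:\bar O_\bi\cap\partial\OO\ne\emptyset}\sup_{O_\bi}g^{(n)}$ together with conformality yields
\[
\nu(\partial\OO) \;=\; e^{-nP}\int \LL_g^n 1_{\partial\OO}\,d\nu \;\le\; e^{n(P^*_{\optop}(T,\log g,\partial\OO)-P+o(1))},
\]
so that the small boundary pressure hypothesis forces the right-hand side to $0$ as $n\to\infty$; hence $\nu(\partial\OO)=0$. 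The identity $0=\nu(\partial\OO)=e^{-jP}\int \LL_g^j 1_{\partial\OO}\,d\nu$, together with $\inf g>0$, then propagates to $\nu(T^j\partial\OO)=0$ for every $j\ge 0$, since $\LL_g^j 1_{\partial\OO}$ is a nonnegative function whose zero set is precisely $M\setminus T^j\partial\OO$.

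Finally, the discontinuity set of $\LL_g^n 1$ lies in $\bigcup_{j=1}^n T^j\partial\OO$, a finite union of smooth hypersurfaces, hence Lebesgue-null and, by the previous step, also $\nu$-null. The $L_\infty(m)$-essential supremum of $\LL_g^n 1$ therefore equals its pointwise supremum over the complement of this set, which in turn dominates the $\nu$-integral:
\[
\|\LL_g^n 1\|_{L_\infty(m)} \;\ge\; \int \LL_g^n 1\,d\nu \;=\; e^{nP}.
\]
Taking $n$-th roots gives $r(\LL_g|_{L_\infty})\ge e^P$, which is the content of the lemma. The main obstacle is the first step: constructing the eigenmeasure $\nu$ with eigenvalue exactly $e^P$ in the piecewise smooth setting; the remaining steps---non-concentration of $\nu$ on the singular set, and the $L_\infty$--versus--$\nu$-integral comparison---are then short consequences of conformality, the small boundary pressure hypothesis, and $\inf g>0$.
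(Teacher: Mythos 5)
There is a genuine gap, and it is exactly where you locate ``the main obstacle'': the existence of a Borel probability measure $\nu$ with $\LL_g^*\nu=\E^{P}\nu$ and eigenvalue \emph{equal to} $\E^{P^*_{\optop}(T,\log g)}$ is not a routine step in this setting, and nothing in your sketch supplies it. The Schauder--Tychonoff argument you invoke requires the map $\nu\mapsto \LL_g^*\nu/(\LL_g^*\nu)(M)$ to be weak-$*$ continuous on probability measures, i.e.\ it requires $\LL_g$ to map $C^0(M)$ into $C^0(M)$; this fails for a piecewise expanding $T$, since $\LL_g\varphi$ is discontinuous across $T(\partial\OO)$. Moreover, even granting some fixed point, identifying its eigenvalue with $\E^{P^*_{\optop}(\log g)}$ (rather than a strictly smaller number, e.g.\ due to mass concentrating near $\SS_\OO$) is precisely the hard part of conformal-measure constructions for multidimensional piecewise maps --- this is essentially the content of Buzzi--Paccaut--Schmitt \cite{BPS}, which needs hypotheses beyond those of the lemma; ``combined with \eqref{laref} to identify the eigenvalue'' is not an argument. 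A further delicacy: your propagation step rests on a pointwise identity $\int\LL_g^j\varphi\,\D\nu=\E^{jP}\int\varphi\,\D\nu$ for all bounded Borel $\varphi$, applied to $\varphi=\mathbf 1_{\partial\OO}$, and on the claim that the zero set of $\LL_g^j\mathbf 1_{\partial\OO}$ is exactly $M\setminus T^j\partial\OO$; but $T$ is undefined (or multivalued, via the extensions $\widetilde T_i$) on $\partial\OO$, so both the pointwise definition of $\LL_g$ on $\SS_\OO$ and the meaning of $T^j\partial\OO$ must be fixed, and the eigen-identity for Borel sets inside $\SS_\OO$ is precisely what a limiting construction need not give you. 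The subsequent steps (boundary mass zero from small boundary pressure, and the comparison of $\|\LL_g^n1\|_{L_\infty(m)}$ with $\int\LL_g^n1\,\D\nu$ once $\nu(\bigcup_{j\le n}T^j\partial\OO)=0$) are fine, but they sit on an unproved foundation that is at least as deep as the lemma itself.

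For contrast, the paper's proof is short and avoids eigenmeasures altogether: by the hypothesis and \eqref{laref} (Buzzi--Sarig), for every $\delta>0$ there is $\mu_\delta\in\Erg(T)$ with $P^*_{\optop}(T,\log g)\le h_{\mu_\delta}(T)+\int\log g\,\D\mu_\delta+\delta$, and then Rohlin's formula together with the classical argument of \cite[pp.~160--161]{Babook1} bounds $h_\mu(T)+\int\log g\,\D\mu$, for any ergodic invariant $\mu$, by $\lim_n\frac1n\log\|\LL_g^n1\|_{L_\infty}$, i.e.\ by the log of the spectral radius of $\LL_g$ on $L_\infty$. If you want to salvage your route, you would essentially have to import the conformal-measure machinery of \cite{BPS} (or the one-dimensional Hofbauer--Keller scheme \cite{HoKe}) and verify its hypotheses, which is far more than the lemma requires.
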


The fact that $P^*_{\optop}(T,\log g)$ is bounded by the logarithm of the spectral radius of $\LL_g$ on
$L_\infty$ for $g>0$ is well known if $d=1$ (see \cite[Thm~3.3]{Babook1}).

\begin{proof}
In view of \eqref{laref}, for any $\delta >0$ there exists $\mu_\delta \in \Erg(T)$
such that $P^*_{\optop}(T,\log g)\le h_{\mu_\delta}(T) +\int \log g \D \mu_\delta+ \delta$.
The claim  thus follows from an application of Rohlin's formula.
See e.g. \cite[pp.~160--161]{Babook1}.
\end{proof}


\subsection{The Essential Spectral Radius (Theorem~\ref{thm:ess-bound} and Corollary~\ref{cor1})}

We introduce
the   {\it weighted  $n$-complexity at the beginning  $D_{n}^{b}(T,\bar f)$} of $T$
and a nonnegative function $\bar f$:
\begin{equation}
D_n^b(\log \bar f)=D_{n}^{b}(T,\log \bar f)=\sup _{x \in M} 
\sum_{\bi \in I^n\mid \overline{O_{\bi}}\ni x} \sup_{\overline{O_{\bi}}}
\bar f
 \, , \,\, n\ge 1 \,\, ,
\end{equation}
and we set, for a submultiplicative
sequence of nonnegative functions $f_n$, \begin{equation}\label{complexity-condw}
D^b(\{\log f_n\})=D^b(T,\{\log f_n\})=\lim_{n\to \infty} \frac{1}{n}\log D_n^b(T,\log f_n) \, .
\end{equation}
If $f_n\equiv 1$ for all $n$, we recover $D_n^b(T)$ and $D^b(T)$ from \eqref{complexity-condn},
\eqref{complexity-cond}.

\smallskip

 We have the following generalisation of
 \eqref{checkagain}:
  \begin{lemma}\label{boundary}
We have $D^b(T, \{\log f_n\})\le P_{\optop}^* ( \{\log f_n\}, \partial \OO)$.
\end{lemma}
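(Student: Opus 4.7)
The plan is to adapt the inductive argument behind the unweighted bound \eqref{checkagain}, handling weights by the submultiplicative factorisation $f_n(y)\le f_k(y)\,f_{n-k}(T^k y)$. Write $Q_m:=\sum_{\bi\in I^m,\,\overline{O_{\bi}}\cap\partial\OO\neq\emptyset}\sup_{O_{\bi}}f_m$, so that $P^*_{\optop}(\{\log f_n\},\partial\OO)=\lim_m\tfrac1m\log Q_m$. For fixed $n\ge 1$ and $x\in M$, set $S_n(x):=\sum_{\bi:\overline{O_{\bi}}\ni x}\sup_{\overline{O_{\bi}}}f_n$ and $k(x):=\min\{k\in\{0,\dots,n-1\}:T^k(x)\in\partial\OO\}$, with $k(x)=n$ if the finite orbit $x,\dots,T^{n-1}x$ avoids $\partial\OO$.

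Three cases. If $k(x)=0$, every $\bi$ with $\overline{O_{\bi}}\ni x$ already meets $\partial\OO$, so $S_n(x)\le Q_n$. If $1\le k(x)=k<n$, the open-disjointness argument underlying \eqref{checkagain} forces the first $k$ coordinates of each admissible $\bi=(i_0,\dots,i_{k-1},\bi')\in I^k\times I^{n-k}$; submultiplicativity gives $\sup_{\overline{O_{\bi}}}f_n\le (\sup_M f_k)\sup_{\overline{O_{\bi'}}}f_{n-k}$, and since each $\overline{O_{\bi'}}$ contains $T^k(x)\in\partial\OO$, summation over $\bi'$ yields $S_n(x)\le (\sup_M f_k)\,Q_{n-k}$. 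Finally if $k(x)=n$, then $x$ lies in a unique cylinder $\overline{O_{\bi(x)}}$ whose nonempty topological boundary is contained in $\bigcup_{j=0}^{n-1}T^{-j}(\partial\OO)$, so the shifted cylinder $\overline{O_{(i_j,\dots,i_{n-1})}}$ meets $\partial\OO$ for some $j\in\{0,\dots,n-1\}$ (for instance $j=n-1$ always works, since $\partial O_{i_{n-1}}\subset\partial\OO$); the same factorisation gives $S_n(x)\le (\sup_M f_j)\,Q_{n-j}$.

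Combining the three cases yields $D_n^b(T,\{\log f_n\})\le \max_{0\le k\le n-1}(\sup_M f_k)\,Q_{n-k}$; taking $\tfrac1n\log$ and $n\to\infty$ bounds $D^b(T,\{\log f_n\})$ by $\max_{a\in[0,1]}\bigl(a\rho+(1-a)P^*_{\optop}(\{\log f_n\},\partial\OO)\bigr)$, where $\rho:=\lim_k\tfrac1k\log\sup_M f_k$. The main obstacle, and the only non-routine step, is to exclude $a>0$, i.e.\ to show $\rho\le P^*_{\optop}(\{\log f_n\},\partial\OO)$ so that the unique-cylinder case~(c) cannot dominate. This is settled by applying case~(c) to a maximiser $\bi^*_n\in I^n$ of $\sup_M f_n$ itself: either $\overline{O_{\bi^*_n}}$ meets $\partial\OO$ and $\log\sup_M f_n\le\log Q_n$ directly, or else the smallest admissible $j_n\in\{1,\dots,n-1\}$ gives $\log\sup_M f_n\le\log\sup_M f_{j_n}+\log Q_{n-j_n}$, and a subsequential analysis of $j_n/n\to a$ (using that 1-cylinders always touch $\partial\OO$ and careful handling of the degenerate regime $j_n/n\to 1$ via the expanding geometry) closes the argument.
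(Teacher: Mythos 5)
Your first three steps are sound and are essentially a telescoped version of the paper's own argument: the paper forces one symbol at a time and hides the leftover weight in the shifted sequence $\bar f_m=f_{m+1}$, whereas you iterate up to the first hitting time $k(x)$ of $\partial\OO$ and factorise via submultiplicativity, which makes the loss explicit as the factor $\sup_M f_k$. The genuine gap is exactly at the step you yourself flag as ``the only non-routine step'': you never prove $\rho\le P^*_{\optop}(\{\log f_n\},\partial\OO)$, you only describe a strategy. The maximiser recursion $\sup_M f_n\le \sup_M f_{j_n}\cdot Q_{n-j_n}$ is vacuous in the regime $j_n$ comparable to $n$: the only choice you actually guarantee is $j_n=n-1$, and iterating it yields merely $\rho\le \log Q_1$, which is far weaker than $P^*_{\optop}(\{\log f_n\},\partial\OO)=\lim_m\frac1m\log Q_m$. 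No ``subsequential analysis of $j_n/n$'' is supplied, and nothing in the expanding geometry rules out $j_n/n\to1$.

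Moreover this step cannot be repaired at the stated level of generality, because $\rho\le P^*_{\optop}(\{\log f_n\},\partial\OO)$ is simply false for general submultiplicative sequences. Take $Tx=2x \bmod 1$ with $O_1=(0,1/2)$, $O_2=(1/2,1)$, and the multiplicative weight $f_n=e^{S_n\phi}$ with $\phi$ smooth, $\phi=-10$ at $\{0,1/2,1\}$ and $\phi=10$ on the period-two orbit $\{1/3,2/3\}$. The four $n$-cylinders whose closures meet $\partial\OO$ consist of points whose orbits stay exponentially close to the fixed points $0,1$ (after at most one step), so $P^*_{\optop}(\{\log f_n\},\partial\OO)=-10$, while $\sup_M f_n\ge e^{S_n\phi(1/3)}=e^{10n}$, i.e.\ $\rho\ge 10$. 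Worse, since a single cylinder already contributes its own supremum to $D^b_n(T,\{\log f_n\})$, one has $D^b_n(T,\{\log f_n\})\ge \sup_{\overline{O_{\bi}}}f_n\ge e^{10n}$ for the cylinder containing $1/3$, so your case~(c) really can dominate: any correct argument must either restrict the weights (so that the growth of $\sup_M f_n$ is witnessed by cylinders touching $\partial\OO$, as is automatic in the unweighted case \eqref{checkagain}) or reinterpret $D^b_n(T,\{\log f_n\})$ (e.g.\ summing only at points of multiplicity at least two). For comparison, the paper's two-line induction does not resolve this either: unwound, its base case is $\sum_{i\in I}\sup_{O_i}f_n$, which in the example above is again of size $e^{10n}$ rather than $e^{nP^*_{\optop}(\{\log f_n\},\partial\OO)}$. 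So the obstacle you identified is real, but your proposal does not close it, and as written it is not a proof of the lemma.
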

\begin{proof}
Setting $P^{*}_{n,\optop}( \log \bar f ,\partial \OO)=\sum_{\bi \in I^n \mid \partial \OO\cap \bar O_{\bi}\neq \emptyset}
 \sup_{\overline{O_{\bi}}} \bar f $, we have 
$$
D^b_1(T, \log \bar f)\le  P^{*}_{1,\optop}( \log \bar f,\partial \OO)$$
 and   (recalling also that $f_{n}\le (f_1 \circ T^{n-1})\cdot f_{n-1}$),
$$D_n^b(T, \log f_n)\le \max \{D_{n-1}^b(T,\log f_n), P^{*}_{n,\optop}( \log f_n,\partial \OO)\}
\, , \, \, \forall n\ge 2\, . \qedhere
$$
\end{proof}
The first main result  follows. (It  is proved in \S\ref{sec:proof-thm}.)

\begin{thm}[Spectral and Essential Spectral Radius]\label{thm:ess-bound}
Let $T:M\to M$ be piecewise $\CC^{\bar\alpha}$ expanding and recall  $\nu_n$  from   \eqref{eq:hyp-index}.
Let $g:M\to \complex$ be  piecewise $\CC^\alpha$. 
For all $p\in(1,\infty)$ and $t\in(0,\min\{1/p,\alpha\})$, the operator $\LL_g$ 
on $L_p(M)$ restricts  boundedly\footnote{See \S\ref{sec:sobolev} for the definition of $\HH^t_p(M)$.} to  $\HH^{t}_p(M)$,
with  essential spectral radius there bounded by 
\begin{equation}\label{eq:ess-bound1}
 R^{t,p}_*(g)= \exp\bigl( 
\frac{ D^b(T)}{p} +\frac {p-1} {p}  P^*_{\optop}(\{\frac p {p-1} \log \bigl(|g^{(n)}|\cdot |\det DT^n|^{\frac 1p}
\cdot \nu_n^{t}\bigr )\})\bigr )  .
\end{equation}
Moreover, for $s\in [0, t]$,  the essential spectral radius of $\LL_g|_{\HH^t_p(M)}$ is bounded
by\footnote{The term $D^b(T, \{\log \nu_n^{s}\})/p$ in   \eqref{eq:ess-bound1t'} can  be replaced
by $P^*_{\optop}( \{\log \nu_n^{s}\},\partial O)/p$,applying Lemma~\ref{boundary} to $f_n=\nu_n^s$.}
\begin{align}\label{eq:ess-bound1t'}
 R^{t,p}_{*,s}(g)=& \exp\biggl( 
\frac{ D^b(T, \{\log \nu_n^{s}\})}{p} \\
\nonumber &\qquad+\frac {p-1} {p}  P^*_{\optop}(\{\frac p {p-1} \log (|g^{(n)}|\cdot |\det DT^n|^{\frac 1p}
\cdot \nu_n^{t-s} )\})\biggr ) \, .
\end{align}
Finally, the spectral radius of $\LL_g$ on $\HH^0_p(M)=L_p(M)$
is bounded by
$
 R^{0,p}_*(g)$.
\end{thm}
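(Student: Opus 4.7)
My plan is to reduce Theorem~\ref{thm:ess-bound} to a Lasota--Yorke inequality
\[
\|\LL_g^n \varphi\|_{\HH^t_p}\le A_n\,\|\varphi\|_{\HH^t_p}+B_n\,\|\varphi\|_{L_p},\qquad \forall n\ge 1,\,\forall \varphi\in \HH^t_p,
\]
with $\limsup_n A_n^{1/n}\le R^{t,p}_*(g)$ (respectively $R^{t,s,p}_*(g)$), and then invoke the Hennion--Nussbaum characterization of the essential spectral radius: for $t>0$ the embedding $\HH^t_p\hookrightarrow L_p$ is compact, so such an inequality gives $r_{\text{ess}}(\LL_g|_{\HH^t_p})\le \limsup_n A_n^{1/n}$. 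The spectral radius bound on $L_p=\HH^0_p$ is obtained from the same iteration estimate with $t=0$ via the spectral radius formula $r(\LL_g|_{L_p})=\lim\|\LL_g^n\|_{L_p}^{1/n}$; compactness is not needed there.

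\textbf{Fragmentation and local estimates.} For each $\bi\in I^n$ with $O_\bi\ne\emptyset$, the extension $\widetilde T_\bi^n:\tO_\bi\to \widetilde T_\bi^n(\tO_\bi)$ is a $C^{\bar\alpha}$ diffeomorphism with inverse $\widetilde T_\bi^{-n}$, and
\[
\LL_g^n \varphi(x)=\sum_{\bi:\,x\in T^n(O_\bi)}\tilde g_\bi^{(n)}(\widetilde T_\bi^{-n}(x))\,\varphi(\widetilde T_\bi^{-n}(x)).
\]
Because the sharp indicators $\mathbf 1_{T^n(O_\bi)}$ are not uniformly bounded multipliers on $\HH^t_p$ (the cylinders shrink at rate $\lambda^{-n}$), the bookkeeping is done via the fragmentation--reconstitution Lemma~\ref{lem:frag-rec}: it produces a partition of unity $\{\chi_\bi\}_{\bi\in I^n}$ subordinate to an enlargement of $\OO^{(n)}$, whose Sobolev-multiplier norm on $\HH^t_p$ is controlled uniformly in $n$ (crucially using $t<1/p$, the range in which indicators of sets with smooth boundary are bounded multipliers), with pointwise multiplicity at most $D^b_n(T)$. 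Writing $\varphi=\sum_\bi \chi_\bi \varphi$ and applying $\LL_g^n$ term-by-term, a Sobolev change-of-variables under $\widetilde T_\bi^{-n}$ (valid because $t<\alpha$ keeps $g$ a bounded multiplier) gives
\[
\|\LL_g^n(\chi_\bi \varphi)\|_{\HH^t_p}\le C\,w_\bi\,\|\chi_\bi \varphi\|_{\HH^t_p}+\text{(lower-order $L_p$ term)},
\]
where $w_\bi=\sup_{O_\bi}(|g^{(n)}|\,|\det DT^n|^{1/p}\,\nu_n^t)$: the factor $|\det DT^n|^{1/p}$ comes from the $L_p$ Jacobian and $\nu_n^t$ from the operator-norm bound $\|D(\widetilde T_\bi^{-n})\|^t\le \nu_n^t$ in the Sobolev part.

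\textbf{H\"older recombination.} Summing over $\bi\in I^n$ and applying H\"older's inequality with conjugate exponents $p$ and $p/(p-1)$,
\[
\sum_{\bi} w_\bi\|\chi_\bi\varphi\|_{\HH^t_p}\le \Bigl(\sum_{\bi}w_\bi^{p/(p-1)}\Bigr)^{(p-1)/p}\Bigl(\sum_{\bi}\|\chi_\bi\varphi\|_{\HH^t_p}^p\Bigr)^{1/p}.
\]
Up to a lower-order $L_p$ correction absorbed into $B_n$, the second factor is bounded by $D^b_n(T)^{1/p}\|\varphi\|_{\HH^t_p}$, by the pointwise multiplicity of $\{\chi_\bi\}$; the first factor, by definition, has $n^{\mathrm{th}}$-root limit $\exp\bigl((p-1)/p\cdot P^*_{\optop}(\{p/(p-1)\log(|g^{(n)}|\,|\det DT^n|^{1/p}\,\nu_n^t)\})\bigr)$. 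Hence $A_n^{1/n}\to R^{t,p}_*(g)$ as $n\to\infty$, giving \eqref{eq:ess-bound1}. For the refined bound \eqref{eq:ess-bound1t'}, split $\nu_n^t=\nu_n^s\cdot\nu_n^{t-s}$ and absorb the $\nu_n^s$-piece into the partition-of-unity bookkeeping, weighting each support by $\sup_{O_\bi}\nu_n^s$; this replaces $D^b_n(T)$ with the weighted complexity $D^b_n(T,\{\log\nu_n^s\})$, while only $\nu_n^{t-s}$ is left inside the $w_\bi$ feeding the pressure term. The rest of the argument is the same.

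\textbf{Main obstacle.} The technical heart is the construction of the cutoffs in Lemma~\ref{lem:frag-rec}. A naive smoothing of $\partial\OO^{(n)}$ produces cutoffs with derivatives of order $\lambda^n$, matching the inverse of the cylinder diameter, which would destroy the exponential rate in the Sobolev norm. The lemma must instead exploit that, for $0<t<1/p$, indicators of smooth-boundary sets are bounded multipliers on $\HH^t_p$ with a constant depending only on the geometry of $\partial\OO$, so that the cutoff cost is $n$-independent and only the pointwise-multiplicity price $D^b_n(T)$ (or its weighted refinement) is paid. A secondary systematic nuisance is that $T$ is defined only off the null set $\SS_\OO$; this is handled by working uniformly with the $C^{\bar\alpha}$-extensions $\widetilde T_\bi$ and the $C^\alpha$-extensions $\tilde g_{i_k}$ on the neighborhoods $\tO_\bi$, and verifying that all estimates are insensitive to behavior on $\SS_\OO$. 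With Lemma~\ref{lem:frag-rec} in hand, the Lasota--Yorke inequality and the theorem follow in the standard way.
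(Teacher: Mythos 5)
Your overall skeleton (Lasota--Yorke inequality with main coefficient $\bigl(\sum_{\bi}\sup_{O_\bi}(|g^{(n)}||\det DT^n|^{1/p}\nu_n^t)^{p/(p-1)}\bigr)^{(p-1)/p}D^b_n{}^{1/p}$, then Hennion and compactness of $\HH^t_p\hookrightarrow L_p$, plus the $\nu_n^s$/$\nu_n^{t-s}$ splitting for \eqref{eq:ess-bound1t'}) matches the paper. But there is a genuine gap at what you yourself call the technical heart: you assert that Lemma~\ref{lem:frag-rec} ``produces a partition of unity $\{\chi_\bi\}$ subordinate to an enlargement of $\OO^{(n)}$ whose Sobolev-multiplier norm on $\HH^t_p$ is controlled uniformly in $n$.'' Lemma~\ref{lem:frag-rec} does no such thing: it is a norm (de)composition statement ($\ell^p$ fragmentation and reconstitution, with multiplicity factors $\beta^{(p-1)/p}$, $\beta^{1/p}$) valid for an \emph{arbitrary given} partition of unity, and in the paper it is applied only to the fixed chart partition $\{\theta_{\omega'}\}$ and to the zoom family $\{\tau_m\circ R_n\circ\kappa_{\omega'}\}$ --- never to a dynamical partition at cylinder scale. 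The existence of cutoffs adapted to $\OO^{(n)}$ with $n$-uniform multiplier bounds is exactly the difficulty you cannot wave away: smooth cutoffs at scale $\lambda^{-n}$ have derivatives of order $\lambda^{n}$, and sharp indicators of $n$-cylinders are bounded multipliers (Lemma~\ref{lem:charact}, Strichartz) only at a cost governed by the number of connected components of line intersections, which for $O_\bi$ grows with $n$; it is \emph{not} ``$n$-independent, depending only on the geometry of $\partial\OO$.''

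The paper's actual mechanism, which is absent from your proposal, is Thomine's zoom: one works with the zoomed norms $\|\cdot\|_{r_n,t,p}$ of \eqref{eq:zoom-norm}, keeps the sharp indicators $\mathbf 1_{O_\bi}$ in the decomposition \eqref{eq:L-charact} (together with smooth cutoffs $\theta_\bi\equiv 1$ on $O_\bi$ whose derivatives only pollute the compact $\|\cdot\|_{H^{t'}_p}$ term via Lemma~\ref{lem:LY-local}), and pays for $\mathbf 1_{O_\bi}$ a factor $C_{t,p}L_0 n$ via Lemma~\ref{lem:boundary-comp} combined with Lemma~\ref{lem:charact} --- polynomial in $n$, hence invisible in the exponential rate. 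The zoom (choice of $r_n$) is also what makes the localized supports small enough that at most $D^b_n$ cylinders meet each piece, i.e.\ $\#J^n(m,\omega')\le D^b_n$, and what guarantees the distortion-type hypothesis \eqref{control} of Lemma~\ref{lem:LY-local}; without it your multiplicity count $D^b_n$ for pieces living at unit (chart) scale is unjustified. Since the zoomed norm is only equivalent to $\|\cdot\|_{\HH^t_p}$ with $n$-dependent constants, one must also fix a large $n_0$ and iterate the inequality for $\LL_g^{n_0}$ before applying Hennion. As written, your proof delegates the crux to a version of Lemma~\ref{lem:frag-rec} that does not exist, so the argument does not close.
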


Note that by \eqref{forTh0}, we have (similar bounds can be written for $R^{t,p}_{*, s}(g)$)
\begin{align} 
&R^{t,p}_*(g)\le \nonumber \E^{ \frac { D^b(T)}p+ \frac { p-1} p P^*_{\optop}(\frac p {p-1} \log |g|)}
  \lim_{n\to \infty} \| |\det DT^n|^{\frac 1p} \nu_n^{t}\|_{L_\infty}^{\frac 1 n}\\
\label{eq:ess-bound1'}
&\qquad\qquad \le \E^{ \frac { D^b(T)} p+ \frac { p-1} p P^*_{\optop}(0)}
  \lim_{n\to \infty} \|g^{(n)}\cdot |\det DT^n|^{\frac 1p} \nu_n^{t}\|_{L_\infty}^{\frac 1 n} .
\end{align}

The Ruelle inequality is the property that 
\begin{equation}\label{Ri}\sup_{\mu \in \Erg(T)} \{h_{\mu}(T)- \int \log |\det DT| \D\mu\} \le 0\, . 
\end{equation}
By \eqref{laref} and \eqref{larefB},   small boundary
pressure 
for $f_n=|\det DT^n|^{-1}$ implies the Ruelle inequality. See \cite[(1)]{APu} for a ``large image'' condition
(called quasi-Markovianity there) which
ensures the Ruelle inequality.

We state a corollary of
\eqref{eq:ess-bound1}  (the reader is invited to derive   variational bounds
 from \eqref{eq:ess-bound1t'}, Lemma~\ref{boundary}):

\begin{cor}\label{cor1} In the setting of Theorem~\ref{thm:ess-bound}, set $f=(|g| |\det DT|^{1/p})^{p/(p-1)}$. Assume that the Ruelle inequality holds. If 
$P^*_{\optop}(T,\log f, \partial \OO)<P^*_{\optop}(T,\log f)$
then
\begin{align}
 R_*^{t, p}(g)&\le \nu_*^t \cdot \E^{ \frac { D^b(T)}p} 
\exp\bigl(\sup_{\mu\in \Erg(T)}\bigl\{ \frac {p-1}{p}h_\mu(T)+\int \log( |g|  |\det DT|^{1/p})\D\mu\bigr\}\bigr)\\
\nonumber &\le \nu_*^t \cdot \E^{ \frac { D^b(T)}p} 
\exp\bigl (\sup_{\mu\in \Erg(T)}\bigl\{\int \log( |g|  |\det DT|)\D\mu\bigr\}\bigr ) \, .
\end{align}
\end{cor}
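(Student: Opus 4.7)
The plan is to start from \eqref{eq:ess-bound1} and turn the subadditive pressure inside its exponential into an additive one, to which the Buzzi--Sarig variational principle \eqref{laref} then applies. Concretely, I will peel off the $\nu_n^{t}$ contribution from the multiplicative piece built out of $g$ and $\det DT$, and then run \eqref{laref} for the multiplicative potential $\log f$, with $f=(|g|\cdot|\det DT|^{1/p})^{p/(p-1)}$. The small boundary pressure of $f$ is granted by hypothesis; its positivity follows from $|\det DT|\ge\lambda^{d}>0$, provided one also assumes the implicit $\inf|g|>0$ required by \eqref{laref}.

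For the separation step, each cylinder supremum inside the pressure in \eqref{eq:ess-bound1} is factorised by
\[
\sup_{O_{\bi}}\bigl(|g^{(n)}||\det DT^{n}|^{1/p}\nu_{n}^{t}\bigr)^{p/(p-1)} \le \bigl(\sup_{M}\nu_{n}\bigr)^{tp/(p-1)}\sup_{O_{\bi}}\bigl(|g^{(n)}||\det DT^{n}|^{1/p}\bigr)^{p/(p-1)}.
\]
Because $n\mapsto\sup_{M}\nu_{n}$ is submultiplicative, $\lim_{n}(\sup_{M}\nu_{n})^{1/n}$ exists and is identified with the quantity written $\nu_{*}$ in the statement. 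Summing over $\bi\in I^{n}$, taking $\tfrac{1}{n}\log$, and letting $n\to\infty$ yields
\[
\tfrac{p-1}{p}P^{*}_{\optop}\bigl(\{\tfrac{p}{p-1}\log(|g^{(n)}||\det DT^{n}|^{1/p}\nu_{n}^{t})\}\bigr) \le t\log\nu_{*}+\tfrac{p-1}{p}P^{*}_{\optop}(\log f).
\]
The Buzzi--Sarig principle \eqref{laref} applied to $f$ then produces
\[
\tfrac{p-1}{p}P^{*}_{\optop}(\log f)=\sup_{\mu\in\Erg(T)}\Bigl\{\tfrac{p-1}{p}h_{\mu}(T)+\int\log\bigl(|g||\det DT|^{1/p}\bigr)\,\D\mu\Bigr\}.
\]

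To finish, I exponentiate and absorb the prefactor $\E^{D^{b}(T)/p}$ coming from \eqref{eq:ess-bound1}. The first inequality of the corollary follows from the trivial bound $\tfrac{p-1}{p}h_{\mu}\le h_{\mu}$ (valid since $h_{\mu}\ge 0$). The second inequality is obtained by applying the Ruelle inequality \eqref{Ri} to the $\tfrac{p-1}{p}$-weighted entropy term, via
\[
\tfrac{p-1}{p}h_{\mu}+\int\log\bigl(|g||\det DT|^{1/p}\bigr)\,\D\mu \le \tfrac{p-1}{p}\int\log|\det DT|\,\D\mu+\int\log\bigl(|g||\det DT|^{1/p}\bigr)\,\D\mu=\int\log\bigl(|g||\det DT|\bigr)\,\D\mu.
\]

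The one delicate point, and the main potential obstacle, is the bookkeeping of the $\nu_{n}^{t}$ contribution: it must be shown that extracting $(\sup_{M}\nu_{n})^{tp/(p-1)}$ from each cylinder supremum produces the sharp asymptotic factor $\nu_{*}^{t}$, rather than the cruder $\lambda^{-t}$. Passing from the global $\sup_{M}\nu_n$ to the per-cylinder quantities $\tilde\nu_{n,\bi}$ from \eqref{eq:hyp-indext} would refine this if needed. A secondary technicality is checking the positivity assumption $\inf f>0$ so that \eqref{laref} is legitimately applicable.
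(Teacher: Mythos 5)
Your argument is correct and is essentially the paper's (very terse) proof: the paper likewise combines \eqref{eq:ess-bound1} with the additive Buzzi--Sarig principle \eqref{laref} for the multiplicative potential $\log f$, keeps the factor $\frac{p-1}{p}$ in front of $h_\mu$, and derives the second bound by applying the Ruelle inequality to that weighted entropy term, exactly as in \cite[(2.27)]{Babook2}. Your ``delicate point'' about the $\nu_n^t$ bookkeeping is not an obstacle: the prefactor $\nu_*^t$ in the corollary is precisely the crude $\lim_n\|\nu_n\|_{L_\infty}^{t/n}$ factor you extract (the sharper per-measure Lyapunov exponent is only obtained in Corollary~\ref{lecor} via the subadditive principle of Theorem~\ref{thm:var-princ}).
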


Clearly,
$
\sup_{\mu\in \Erg(T)} 
\{\int \log( |g|  |\det DT|)\D\mu\}
\le \lim_{n\to \infty}  ( \|g^{(n)} |\det DT^n|\|_{L_\infty} )^{1/n}
$.

\begin{proof}[Proof of Corollary~\ref{cor1}]
The first bound follows from \eqref{laref} and \eqref{eq:ess-bound1}. To show the second
one, use 
Ruelle's inequality and
 proceed as for  \cite[(2.27)]{Babook2}.
\end{proof}

We list some comments about the unconditional result
Theorem~\ref{thm:ess-bound}.

\begin{remark}[The case $d=1$]\label{d1}
If $d=1$,
then   $D^b(T)=0$, and $|\det DT^n|=|DT^n|=\nu_n^{-1}$ so that  \eqref{eq:ess-bound1'} is
\begin{align}
\label{for1} &\le \E^{ \frac { p-1} p P^*_{\optop}(0)} 
\lim_{n\to \infty} \,  \bigl \|  g^{(n)} |DT^n|^{\frac 1 p -t} \bigr \|^{1/n}\, .
\end{align}
Letting $p\to 1$, $t\to 1$ in \eqref{for1} (or in \eqref{DDT}), we recover the
(sometimes optimal \cite{Kel}) bound
$\lim_n\sup |g^{(n)}|^{1/n}$ from
\cite{BaKe}  for the essential spectral radius  on $BV$.
\end{remark}

\begin{remark}[Spectral Gap if $g=|\det DT|^{-1}$]\label{compLiv}
For $g=|\det DT|^{-1}$ the dual of $\LL_g$ fixes Lebesgue measure, and Lebesgue measure belongs to the
dual of any $\HH^t_p(M)$ with $t\ge 0$.
In addition, the norm of $\|\LL_g\|_{L_1(M)}\le 1$,
with  $\HH^t_p(M)\subset L_1(M)$. 
Hence, if $r_{\text{ess}}(\LL_{|\det DT|^{-1}}|_{\HH^t_p})<1$,
for some $0<t<1/p$, then the spectral radius of $\LL_g$
on $\HH^t_p$ is equal to one and
 standard arguments imply that\footnote{See also the comment after
 \eqref{laref} for the existence of equilibrium states for general $g>0$ under small boundary pressure.}
$T$ has finitely many ergodic absolutely continuous
invariant probability measures, with densities
in $\HH^t_p(M)$, the union of whose ergodic basins has full measure (see e.g. \cite[Thm~33]{BaGo9} or \cite[Thm~1]{Liv13}).  Each of these measures is exponentially mixing
(up to  a finite period) for  $\CC^v$ H\"older observables if $v>t$.
  Taking  $t$ (and thus $p$) close enough to $1$, our bound
\eqref{eq:ess-bound1'} for $r_{\text{ess}}(\LL_{|\det DT|^{-1}}|_{\HH^t_p})$
is strictly smaller than one  if $D^b(T)=0$. More generally, 
if $\sigma=\exp({D^b(T, \{\nu_n^{s}\})})<1$ for some $s\le t$ then \eqref{eq:ess-bound1t'}
is bounded by $\sigma$ for $t$ and $p$ close enough to $1$, which is
comparable to Liverani's bound from \cite[(3), Lemma~3.1]{Liv13}.
\end{remark}

\begin{remark}[Spectral Gap if $g\ge 0$]
The spectral radius of $\LL_{T,g}$ on $L_\infty$
is 
$\le P^*_{\optop}(\log |g|)$ (similarly as for \eqref{notenough}).
Although $\HH^t_p$ is not included in $L_\infty$ if $t<1/p$,
 we conjecture,  in view of \cite[Thm~1.2]{Buz}, that,
if $g\ge 0$ and there exist  $t <\min \{1/p,\alpha\}$ such that
 $
r_{\text{ess}}(\LL_{g}|_{\HH^t_p})< P^*_{\optop}(\log g)=R_*^{0,\infty}(g)
$,
then the spectral radius of $\LL_{T,g}$ on $\HH^t_p$
is 
$P^*_{\optop}(\log g)$. In particular, combining the maximal 
eigenvectors of $\LL_g$ and its dual should then
give another construction for the equilibrium
states of Buzzi--Sarig \cite{Buz}, with the
additional perk of exponential decay of correlations for suitable observables.
\end{remark}

\begin{remark}[Comparing \eqref{eq:ess-bound1} with Thomine's bound \eqref{DDT}]\label{Dend2}
Our  bound \eqref{eq:ess-bound1'}  is  less than or equal to  
\eqref{DDT} as soon as $D^e(T)\ge P^*_{\optop}(0)$.
By Lemma
\ref{finally}, this holds under the small boundary entropy condition
$P^*_{\optop}(T,0,\partial \OO) < P^*_{\optop}(T, 0)$.
For $T$ a multidimensional $\beta$-transformation (i.e. a piecewise affine
$T$, with  $DT$ constant) on $[0,1]^d$, it is known \cite[Thm~1, Lemma~1]{Buz97} that $D^b(T)=0$ and  $P^*_{\optop}(0)=\htop=\sum_{i=1}^d \xi_i=\log|\det DT|$, for  $0<\xi_1\le \cdots \le \xi_d$ the Lyapunov exponents of $T$.
By \cite[Thm~1]{BuKe}, we have $D^e(T)=\lim_{n\to \infty} \max \LL_{T,1}^n (1)\ge \exp(\sum_{i=1}^d \xi_i)$.
Thus, 
we recover Thomine's bound \eqref{DDT} since \eqref{eq:ess-bound1'} gives
$$r_{\text{ess}}(\LL_{|\det DT|^{-1}}|_{\HH^t_p})\le \E^{-t \xi_1+\frac{p-1}{p}\sum_{i=1}^d \xi_i+\frac{1-p}{p}\sum_{i=1}^d \xi_i}=\E^{-t \xi_1}\, .
$$

Our bound \eqref{eq:ess-bound1} 
can be strictly smaller than \eqref{DDT}
(we expect that this holds generically if $D^e(T)= P^*_{\optop}(0)$): Take 
$d=1$ and\footnote{Then $P^*_{\optop}(0)=D^e(T)$.
Moreover,  $P^*_{\optop}(0)=\htop(T)>0$:  
Taking the $O_i$ to be maximal
monotonicity intervals, $\#\{ \bi \in I^n \mid O_\bi \ne \emptyset\}$ is
  the lap-number $lap_n$
of $T$, and   $\lim_{n\to \infty} n^{-1} \log lap_n=\htop(T)$.}  $T$  continuous, with $g= |\det DT|^{-1}$. By the variational principle \cite[Thm~3.1]{Buz} for $-\log |\det DT|$ (using \eqref{Buzbd} with $D^b(T)=0$), there exists $\musrb$ such that\footnote{Note that the left hand-side is equal to zero.}
\begin{align*}
\frac{p-1} p P^*_{\optop} ( \frac p {p-1} \log g&+ \frac 1 {p-1} \log |\det DT|)\\
&= \frac{p-1} p\bigl ( h_{\musrb} -\int \log |\det DT| d\musrb\bigr ) \\
&\le  \frac{p-1} p\bigl (P^*_{\optop} ( 0) -\int \log |\det DT| d\musrb \bigr ) \, . 
\end{align*}
If  $T$ has a fixed point with Lyapunov
exponent $<\int \log |\det DT| d\musrb$ then 
$$
\int \log |\det DT|^{-1+1/p} d\musrb
< \lim_n  \frac 1 n \log
\sup |\det DT^n|^{-1+1/p} \, .
$$
\end{remark}

\begin{remark}[Comparing \eqref{eq:ess-bound1} with Gundlach--Latushkin]\label{GL}
If  $T$ is $\CC^{\bar\alpha}$ and $g$ is $\CC^\alpha$
on $M$, the condition
 $t <1/p$ can be lifted, and  we get the optimal bound 
$$ \exp
\sup_{\mu\in \Erg(T)}\biggl(  h_\mu(T)+ \int \log |g| \D\mu-t\chi_{\mu}(DT)\biggr)
$$
from \cite{GuLa} for $\BB=\CC^\alpha$ 
by\footnote{In the smooth case $D^b(T)=0$,
and the variational principle \eqref{laref} for the subadditive potential
$\log \bigl(|g^{(n)}|\cdot |\det DT^n|^{\frac 1p}
\cdot \nu_n^{t}\bigr )$
holds  \cite[App.~B]{Babook2}.} letting
$p\to \infty$  in \eqref{eq:ess-bound1}.
\end{remark}

\subsection{Variational Principle (Theorem~\ref{thm:var-princ}
and Corollary~\ref{lecor})}

Our second main result (proved in Section \ref{sec:pressure}) is a variational principle for  certain subadditive potentials,  generalising \eqref{laref}:

\begin{thm}[Variational Principle]\label{thm:var-princ}
Let $T$ be piecewise $\CC^{\bar\alpha}$ expanding, let\footnote{Our application is $G=(g\cdot |\det DT|^{\frac 1p})^{q}$.} $G:M \to \complex$ be piecewise $\CC^\alpha$, and let $t\ge 0$.
If the small boundary pressure condition
 \eqref{eq:small-boundary}
holds for  $f_n=f_{n,t}=|G^{(n)}|\cdot \nu_n^t$ (recall \eqref{eq:hyp-index}), then 
\[
\sup_{\mu\in \Erg(T)}\bigl \{h_{\mu}(T)+\int \log |G|\D \mu-t\chi_{\mu}(D T)
\bigr \}= P^*_{\optop}(\{\log f_{n}\}) \, .
\]
In addition, the supremum above is attained.
\end{thm}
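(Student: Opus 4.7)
\emph{Strategy.} I would adapt the Buzzi--Sarig proof of \eqref{laref} to the subadditive setting: replace Walters's classical variational principle by the Cao--Feng--Huang subadditive variational principle \cite{CFH}, and substitute Kingman's subadditive ergodic theorem for the Birkhoff ergodic theorem. The small boundary pressure assumption \eqref{eq:small-boundary} plays the same r\^ole as in \cite{Buz}, namely to guarantee that extremal measures do not see the coding ambiguity at $\partial\OO$.

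\emph{Step 1 (symbolic coding and lift of the potential).} Let $\Sigma\subset I^{\bN}$ be the set of admissible sequences $\bi=(i_0,i_1,\dots)$ for which $\overline{O_{(i_0,\dots,i_{n-1})}}\ne\emptyset$ for every $n\ge 1$, equipped with the product topology and the one-sided shift $\sigma$. Since $\diam(\OO^{(n)})\le \lambda^{-n}\diam(M)\to 0$, the map
\[
\pi:\Sigma\to M,\qquad \pi(\bi)=\bigcap_{n\ge 1}\overline{O_{(i_0,\dots,i_{n-1})}},
\]
is well-defined, continuous, surjective, and satisfies $\pi\circ\sigma=T\circ\pi$ on $\Sigma\setminus \pi^{-1}(\SS_\OO)$. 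Using the $C^\alpha$ extensions $\tilde G_i$ of $G|_{O_i}$ and the inverse branches of $\tT_\bi^n$ (which control $\tilde\nu_{n,\bi}$), I lift $f_n$ to a submultiplicative sequence $\hat f_n:\Sigma\to\real^+$ which is \emph{continuous} (each cylinder is clopen and all extensions are continuous on closed neighbourhoods), coincides with $f_n\circ\pi$ outside $\pi^{-1}(\SS_\OO)$, and satisfies $\sup_{[i_0,\dots,i_{n-1}]}\hat f_n=\sup_{O_\bi}f_n$. Consequently the symbolic pressure matches the intrinsic one:
\[
P_{\sigma,\optop}(\{\log\hat f_n\},\Sigma)=P^*_{\optop}(T,\{\log f_n\}).
\]

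\emph{Step 2 (symbolic variational principle and bridge to $M$).} Because $(\Sigma,\sigma)$ is expansive and $\{\log\hat f_n\}$ is continuous and subadditive, \cite{CFH} yields
\[
P_{\sigma,\optop}(\{\log\hat f_n\},\Sigma)=\sup_{\tilde\mu\in \Erg(\sigma)}\bigl\{h_{\tilde\mu}(\sigma)+\FF_*(\tilde\mu)\bigr\},
\]
with $\FF_*(\tilde\mu):=\lim_n\tfrac1n\int\log\hat f_n\,\D\tilde\mu$ (the limit exists by Kingman), the supremum being attained by upper semi-continuity of the entropy map on expansive systems. For $\tilde\mu\in\Erg(\sigma)$ with $\tilde\mu(\pi^{-1}(\SS_\OO))=0$, the projection $\pi$ induces a measure-theoretic isomorphism onto $(M,\mu)$ with $\mu=\pi_*\tilde\mu\in\Erg(T)$, so $h_{\tilde\mu}(\sigma)=h_\mu(T)$; applying Kingman to $\log|G^{(n)}|$ and Oseledets to identify $\lim_n\tfrac1n\log\nu_n=-\chi_\mu(DT)$ $\mu$-a.e., I obtain
\[
\FF_*(\tilde\mu)=\int\log|G|\,\D\mu-t\chi_\mu(DT).
\]
Conversely every $\mu\in\Erg(T)$ with $\mu(\SS_\OO)=0$ lifts to such a $\tilde\mu$ with the same entropy and the same value of $\FF_*$.

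\emph{Step 3 (small boundary condition and main obstacle).} The hard part, and the sole purpose of \eqref{eq:small-boundary}, is to discard measures that see the coding ambiguity. Following the partition-refinement argument of \cite[\S5]{Buz} (which exploits the codimension-one structure of $\partial\OO$), now applied to the sub\-additive sequence $\{\log f_n\}$ via Kingman in place of Birkhoff, one shows that every $\tilde\mu\in\Erg(\sigma)$ with $\tilde\mu(\pi^{-1}(\SS_\OO))>0$, and every $\mu\in\Erg(T)$ with $\mu(\SS_\OO)>0$, satisfies
\[
h_{\tilde\mu}(\sigma)+\FF_*(\tilde\mu)\le P^*_{\optop}(T,\{\log f_n\},\partial\OO).
\]
Combined with \eqref{eq:small-boundary}, this forces the supremum in the symbolic variational principle to be realised by ``good'' measures that are in bijection via $\pi$ with elements of $\Erg(T)$, and attainment transfers directly from Step~2. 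Assembling Steps~1--3 gives
\[
P^*_{\optop}(T,\{\log f_n\})=\sup_{\mu\in\Erg(T)}\bigl\{h_\mu(T)+\int\log|G|\,\D\mu-t\chi_\mu(DT)\bigr\},
\]
with the supremum attained, as required.
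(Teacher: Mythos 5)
Your Steps 1 and 2 follow the paper's route exactly: code by the shift $\sigma$ on $\Sigma(T)$, lift the potentials so that the symbolic pressure equals $P^*_{\optop}(T,\{\log f_n\})$, invoke the Cao--Feng--Huang subadditive variational principle \cite{CFH}, get attainment from upper semi-continuity on the expansive shift, and transport measures with $\mu(\SS_\OO)=0$ back and forth through $\pi$ using Oseledets to identify $-t\chi_\mu(DT)$. So far this matches the paper's \S\ref{symbd} and Steps I--II of \S\ref{both}.

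The genuine gap is your Step 3, which is precisely where the paper does its work (Proposition~\ref{prop:key}), and which you only assert, deferring to ``the argument of \cite{Buz} with Kingman in place of Birkhoff.'' That gloss does not survive inspection. Buzzi's counting argument needs a \emph{lower} bound, uniform on a set of measure $>\mu(\SS_\OO)/2$, for the potential at time $nm_1$ in terms of an $n$-fold product of a fixed-time quantity; for a merely submultiplicative sequence the inequality $f_{m+n}\le f_m\circ T^n\cdot f_n$ goes the wrong way, so Kingman alone does not produce it. The paper circumvents this by de-subadditivizing: it works with the multiplicative block $(G^{(m_1)}\nu_{m_1}^t)^{(n)}$, which requires the iterate-pressure comparison Lemma~\ref{copyBT} (to replace $P^*_{\optop}(T,\{\log f_n\},\cdot)$ by $\frac1m P^*_{\optop}(T^m,\log f_m,\cdot)$) together with Oseledets and Birkhoff to get \eqref{oseledec}, and then Rudolph's entropy formula \cite{Rud} plus the H\"older distortion bounds \eqref{unif-cont}--\eqref{unif-cont'} on cylinders (the authors even flag in a footnote that piecewise uniform continuity, as in \cite{Buz}, does not obviously suffice here, so the $C^\alpha$ hypothesis is genuinely used). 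Two further points you skip: since $G$ is complex-valued and may vanish, one must first reduce to $\inf|G|>0$ via the approximation \eqref{zeroclaim}; and the natural output of the counting argument is a bound by the pressure on $\SS_\OO$ (cylinders meeting $R\cap\SS_\OO$), not on $\partial\OO$ as you state, so an extra step (carried out at the symbolic level in the paper's Step II, via $\pi^{-1}\partial\OO$) is needed to connect to the hypothesis \eqref{eq:small-boundary}. Your skeleton is the right one, but without a proof of the Proposition~\ref{prop:key}-type estimate the argument is incomplete, and the mechanism you propose for it would not work as stated.
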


Define  
\begin{equation}\label{sequence1}
 f_{n,t,p}= |g^{(n)}|\cdot |\det DT^n|^{\frac 1p}
\cdot \nu_n^{t}\,,\, t\ge 0\, , \,  p \ge1\, , \, \, n\ge 1 \, .
\end{equation}

Theorem~\ref{thm:var-princ} and the proof of Theorem~\ref{thm:ess-bound}
allow us to show the following corollary in \S\ref{sec:proof-thm} see the proof of Corollary~\ref{cor1}
for the last claim):

\begin{cor}[Variational Expression for the Essential Spectral Radius]\label{lecor}
Let $T$, $g$, $p$, $t$, be as in Theorem~\ref{thm:ess-bound}.
Assume that the Ruelle inequality \eqref{Ri} holds. If    $T$ satisfies 
the small boundary pressure condition \eqref{eq:small-boundary}
 for  $f_n=f_{n,t,p}^{q}$ with $q\in[ 1, \frac p {p-1}]$, and $0\le t<\min \{1/p, \alpha\}$,  
then the spectral radius of $\LL_g$ on $\HH^0_p=L_p$ is bounded by
$R^{0,p,q}(g)$, the essential spectral radius of $\LL_{g}$ on ${\HH^t_p}$ is bounded
by 
\begin{align}
\nonumber R^{t,p,q}_*(g)= \exp& \biggl (\frac {D^b(T)}p \\
\label{eq:ess-bound2} &+
\sup_{\mu\in \Erg(T)}\bigl\{  \frac{ h_\mu(T)}{q}+ \int \log(|g||\det DT|^{\frac 1p}) \D\mu-t\chi_{\mu}(DT)
\bigr \}\biggr)\, .
\end{align}
In addition, we have
\begin{align}
\label{XY} R^{t,p,q}_*(g)\le \exp \bigl (\frac {D^b(T)}p\bigr ) \cdot \lim_{n\to \infty} & 
\bigl 
 \| g^{(n)}|\det DT^{(n)}|^{1/p+1/q}\, \nu_n^t\bigr \|_{L_\infty} ^{1/n}\, .
\end{align}
\end{cor}

If $D^b(T)=0$ and \eqref{eq:small-boundary}  holds for $f^q_{n,t,p}$ with $q=p/(p-1)$, the bound \eqref{eq:ess-bound2} 
reduces to the bound in \cite[Thm~2.15]{Babook2} for smooth expanding maps.

Note that $1/p+1/q\in [1, 1+1/p]$.
If $q=p/(p-1)$ then $1/p+1/q=1$, and, comparing \eqref{XY} to
Thomine's bound \eqref{DDT},  we see that the complexity at the end has disappeared, at the cost
of a higher weight on $|\det DT|$.

\begin{remark}[Small Boundary Pressure Condition \eqref{eq:small-boundary}]\label{condest}
 The bound \eqref{eq:small-boundary}   
for  $f_{n,t,p}^{q}$ holds\footnote{Cf. the condition $\sup \log |g|- \inf \log |g|<P^*(0)$ from the pioneering work \cite{HoKe}.}  if
 \begin{align}\nonumber
q\bigl (\log [\sup  (|g||\det DT|^{1/p}\lim_n (\sup \nu_n^{t})^{1/n})]
&-
\log \bigl[ \inf ( |g||\det DT|^{1/p})\lim_n (\inf \nu_n^{t})^{1/n}\bigr]\bigr )
\\
\label{lacond} &\qquad\qquad\qquad<{P^*_{\optop}(0)}-P^*_{\optop}(0, \partial \OO)
 \, .
\end{align}
Indeed,  by \eqref{forTh0}, we find
$$
\E^{\frac{P^*_{\optop}(\{q\log f_{n,t,p}\})}q}\ge \E^{\frac {P^*_{\optop}(0)}q}
\lim_{n \to \infty} \inf  [ |g^{(n)}|\cdot |\det DT^n|^{\frac 1p}\nu_n^{t}] ^{1/n}\, ,
$$
while
 \eqref{forTh0}  gives
\begin{align*}
\E^{\frac{P^*_{\optop}(\{q\log f_{n,t,p}\}, \partial \OO)}q}&\le \E^{\frac{P^*_{\optop}(0, \partial \OO)}q}  \lim_{n \to \infty}\sup  [ |g^{(n)}|\cdot |\det DT^n|^{\frac 1p}\nu_n^{t}] ^{1/n}
\, .
\end{align*}
In the piecewise affine case,  \eqref{Buzbd2} gives
$P^*_{\optop}(0, \partial \OO)\le {D^b(T)+\log \Lambda_{d-1}}$ (recall \eqref{defLambda}).
 If $d\ge 2$,
  \eqref{lacond} holds for $\beta$-transformations $T$ and $g=|\det DT|^{-1}$ with  $0<t<1/p$, for
arbitrary $q\ge 1$.
Indeed, $D^b(T)=0$ and $P^*_{\optop}(0)= \sum_{i=1}^d \xi_i\equiv\log |\det DT|$,
with $\nu_n$ a constant function, and $\lim_n \nu_n^{1/n}=\exp(-\xi_1)$,
so that   \eqref{lacond} reads
$
{ \sum_{i=2}^d \xi_i} <{ \sum_{i=1}^d \xi_i} 
$.
\end{remark}

\section{Proof of Theorem~\ref{thm:ess-bound} on the Essential Spectral Radius}\label{sec:PartI}
\subsection{Sobolev Spaces}\label{sec:sobolev}
For $p\in (1,\infty)$ and $t\in \real$,  define local Sobolev spaces by
$$H^t_p=H^t_p(\bR^d)=\{u\in L_p(\bR^d)\mid \|u\|_{H_{p}^{t}(\bR^d)}=\|\bF^{-1}((1+|\xi|^2)^{t/2}\cdot \bF u)\|_{L_p(\bR^d)} <\infty\}
\, ,$$
 where $\bF$ is the Fourier transform. 
If $t\ge 0$ then $H^t_p(\bR^d)$
is the closure of the Schwartz
 space $\SS$ of rapidly decreasing functions for the norm $\|u\|_{H_{p}^{t}(\bR^d)}$,
see e.g. \cite[Thm~3.2/2, Rk~3.2/2]{Tr}.

Recall $\widetilde M$ from the beginning of \S\ref{sec:def}.
To patch the local spaces together, we will
use charts and partitions of unity:

\begin{definition}[Admissible Charts and Partition of Unity
for $T$]\label{ChP}
For a piecewise $\CC^{\bar\alpha}$
expanding map $T$, we define admissible charts  to be 
a finite system of $\CC^{\infty}$ local charts $\{(V_\omega, \kappa_\omega)\}_{\omega\in \Omega}$, where
each
$V_\omega$  is open in $\widetilde M$ and such that
$\overline V_\omega \subset \tO_{i(\omega)}$, for some $i(\omega)\in I$, 
with  $M \subset \cup_\omega V_\omega$, and where each
$\kappa_\omega : V_\omega\to\bR^d$ is a  diffeomorphism
onto its image.
We define an admissible  partition
of unity to be a  $\CC^{\infty}$ partition of  unity  $\{\theta_\omega:
\widetilde M\to [0,1]\}_{\omega\in \Omega}$
   such that the support of $\theta_\omega$ is contained
in  $V_\omega$.
 \end{definition}

Note that if $M$ has a boundary,  the boundary
	cutoff will be performed through the characteristic functions
	of the $O_i$.

\begin{definition}[$\HH^t_p(M)$]
Let $T$ be piecewise $\CC^{\bar\alpha}$
expanding and let $\kappa_{\omega}$ and $\theta_{\omega}$ be as in  Definition~\ref{ChP}. For  $1<p<\infty$ and
$t<1/p$,  let $\HH^t_p=\HH^t_p(M)$ be defined by
$$
\HH^t_p(M)=\{\varphi \in L_p(M)\mid \|\varphi\|_{H_{p}^{t}(M)}=\sum _{{\omega} \in {\Omega}}\left\|\left(\theta_{{\omega}} \cdot \varphi\right) \circ \kappa_{{\omega}}^{-1}\right\|_{H_{p}^{t}(\bR^d)}<\infty \} . $$
\end{definition}

By \cite{Str}, the definition above makes sense if $M$ has a boundary (in that case
 $\kappa_\omega(\supp(\theta_\omega))$ has a boundary for some
$\omega$) since $t<1/p$.
Changing the system of charts or the partition of unity  produces  equivalent norms (see \cite[I.5, I.6]{Tay}).
It is easy to see that
$\HH^t_p(M)$ is the closure of  $\CC^v(M)$
 for the norm $\|\varphi\|_{H^t_p(M)}$ for  $v>t$.

\subsection{Toolbox. Zoomed Norms}\label{sec:box}We collect  results used for the  Lasota--Yorke inequality.
We shall use   the following localisation
to zoom into  smaller scales.
\begin{lemma}[Localisation {\cite[Thm~2.4.7(ii)]{Tri}}]\label{lem:loc-prin}
Fix $\tau$ in the set $\CC^\infty_0(\bR^d,[0,1])$ of compactly supported $C^\infty$ functions
from $\real^d$ to $[0,1]$.
For  $x\in\bR^d$ and $m\in \bZ^d$, set  $\tau_m(x)=\tau(x+m)$. For any $p\in (1,\infty)$ and $t\in \bR^+$ 
there exists  $C_{t,p,\tau}<\infty$ such that
\begin{equation}\label{eq:localization}
\bigg(\sum_{m \in \integer^{d}}\|\tau_{m} u\|_{H_{p}^{t}}^{p}\bigg)^{\frac{1}{p}} \leq C_{t,p,\tau}\|u\|_{H_{p}^{t}} \, , \, \forall u \in H^t_p (\real^d)\, .
\end{equation}
In addition,  if $\sum_{m\in \bZ^d}\tau_m(x)=1$ for all $x$, then there exists $C_{t,p,\tau}<\infty$ such that
\begin{equation}
\|u\|_{H^t_p}\le C_{t,p,\tau}\bigg(\sum_{m \in \integer^{d}}\|\tau_{m} u\|_{H_{p}^{t}}^{p}\bigg)^{\frac{1}{p}}
\, , \,\forall u \mbox{ such that: } \forall m\, ,\, \tau_m u\in H^t_p(\real^d)\, .
\end{equation}
\end{lemma}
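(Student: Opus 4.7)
The plan is to reduce to Triebel's framework via the Littlewood--Paley characterization of $H^t_p(\bR^d)$, namely $\|u\|_{H^t_p}\sim\|(\sum_{j\ge 0}4^{jt}|\Delta_j u|^2)^{1/2}\|_{L_p(\bR^d)}$ with $\{\Delta_j\}_{j\ge 0}$ a standard dyadic Littlewood--Paley decomposition. The starting point for \eqref{eq:localization} is: (i) multiplication by $\tau\in C^\infty_0$ defines a bounded operator on $H^t_p$ (a consequence of the Mikhlin multiplier theorem or pseudodifferential calculus), and (ii) translation $u\mapsto u(\cdot+m)$ is an isometry of $H^t_p$. Together these give $\|\tau_m u\|_{H^t_p}\le C\|u\|_{H^t_p}$ with $C$ uniform in $m$, but a priori only as an $\ell^\infty$ estimate on the sequence $(\|\tau_m u\|_{H^t_p})_{m\in\bZ^d}$.

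To promote the sum on the left of \eqref{eq:localization} from $\ell^\infty$ to $\ell^p$, I would commute $\Delta_j$ with multiplication by $\tau_m$. Since $\tau$ has compact support, the commutator $[\Delta_j,\tau_m]$ has an integral kernel localised near $m$ at scale $2^{-j}$ with a gain of $2^{-j}$, so up to a controllable error $\Delta_j(\tau_m u)\approx \tau_m(\Delta_j u)$. Combining this with the pointwise finite-overlap bound $\sum_m |\tau_m(x)|^p\le C_\tau$ (from the compact support of $\tau$) and an application of the Fefferman--Stein vector-valued maximal inequality gives
\[
\sum_{m\in\bZ^d}\|\tau_m u\|_{H^t_p}^p\lesssim \int_{\bR^d}\Bigl(\sum_{j\ge 0} 4^{jt}|\Delta_j u(x)|^2\Bigr)^{p/2}dx\sim \|u\|_{H^t_p}^p\, .
\]

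For the reverse inequality, I would use $u=\sum_m \tau_m u$ (a locally finite sum, by compact support of $\tau$) and argue by duality: via the $H^t_p$--$H^{-t}_{p'}$ pairing, the asserted bound is equivalent to the already established \eqref{eq:localization} for the dual exponent $p'=p/(p-1)$ paired against $u$. Alternatively, one refines the partition so that $\sum_m \tau_m^p\equiv 1$ and applies finite overlap pointwise.

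The main obstacle is controlling the commutators $[\Delta_j,\tau_m]$ uniformly in $(j,m)$ in a manner compatible with $\ell^p$-summation in $m$, rather than merely uniformly in $m$ (which is the soft bound). The cleanest route, which is the one actually executed in Triebel's monograph, goes via the atomic or molecular characterization of the Triebel--Lizorkin space $F^t_{p,2}=H^t_p$: each $\tau_m u$ is naturally a superposition of atoms centred at $m$ at unit scale, and \eqref{eq:localization} then becomes a direct reflection of the $\ell^p$-sequence-space characterization of the norm, making the $\ell^p$-summation automatic and bypassing the need to track commutator kernels by hand.
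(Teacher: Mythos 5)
The paper does not actually prove this lemma: it is quoted directly from Triebel \cite[Thm~2.4.7(ii)]{Tri}, the localisation property of the Triebel--Lizorkin spaces $F^t_{p,2}=H^t_p$, so your proposal has to be judged as a self-contained proof of the cited result. For the direct inequality \eqref{eq:localization}, your Littlewood--Paley/commutator plan is the standard route, and your closing move --- passing to the atomic/molecular (or local means) characterisation of $F^t_{p,2}$, which makes the $\ell^p$-summation in $m$ automatic --- is in substance exactly the content of Triebel's theorem; as a reduction to the literature this half is fine, but as an independent proof it leaves the acknowledged key step unexecuted: the kernels of the $\Delta_j$ are only rapidly decaying, not compactly supported, so $\Delta_j(\tau_m u)$ is not localised near $\supp\tau_m$ and the finite-overlap argument must control off-diagonal contributions in $m$ before Fefferman--Stein can be applied.

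The genuine gap is in the converse inequality. Writing $u=\sum_m\tau_m u$ and pairing with $v\in H^{-t}_{p'}$ gives $\langle u,v\rangle=\sum_m\langle\tau_m u,v\rangle$; to apply H\"older in $m$ you must first localise $v$, i.e.\ insert enlarged cutoffs $\tilde\tau_m$ with $\tilde\tau_m\tau_m=\tau_m$ and write $\langle\tau_m u,v\rangle=\langle\tau_m u,\tilde\tau_m v\rangle$, since otherwise the full norm $\|v\|_{H^{-t}_{p'}}$ appears in every term and the sum over $m$ is not controlled. After that insertion, the needed bound $\bigl(\sum_m\|\tilde\tau_m v\|^{p'}_{H^{-t}_{p'}}\bigr)^{1/p'}\lesssim\|v\|_{H^{-t}_{p'}}$ is precisely the direct inequality for the space $H^{-t}_{p'}$, i.e.\ for \emph{negative} smoothness, which your sketch (formulated for $t\in\bR^+$) does not cover; it is true (Triebel's theorem holds for all real smoothness, and the commutator argument adapts), but it must be stated and justified, since it is not the ``already established'' case. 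The proposed alternative of refining the partition so that $\sum_m\tau_m^p\equiv1$ and ``applying finite overlap pointwise'' is not a proof for fractional $t$: the $H^t_p$ norm is not local, and no pointwise identity yields the reverse bound. Finally, note that the statement only assumes $\tau_m u\in H^t_p$ for each $m$, so the duality argument should also conclude a posteriori that $u\in H^t_p$ (pair against Schwartz functions, use that the sum is locally finite and that $H^t_p$ is reflexive for $1<p<\infty$); that is a one-line remark, but it belongs in the proof.
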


For any measurable set $O$ and every
$p\in (1,\infty)$ we have $\|\mathbf{1}_{O} \varphi\|_{L_{p}} \leq \|\varphi\|_{L_{p}}$.
The next result   is the reason behind the constraint $t<1/p$:
\begin{lemma}[Characteristic Functions as Bounded Multipliers  {\cite[Cor.~II.4.2]{Str}}]\label{lem:charact}
For any  $-1+1/p< t<1/p<1$ there exists $C_{t,p}$  such that, for any $L \ge 1$,
and every measurable set $O \subset \bR^d$ whose intersection with almost every line parallel to some coordinate axis has at most $L$ connected components, 
we have
\[
\|\mathbf{1}_{O} \varphi\|_{H_{p}^{t}} \leq C_{t, p} L\|\varphi\|_{H_{p}^{t}}\, ,\, \forall \varphi \in H^t_p(\real^d)\, .
\]
\end{lemma}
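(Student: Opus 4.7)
The plan is to reduce by duality to the case $0 \le t < 1/p$, then control $\mathbf{1}_O \varphi$ via the difference-quotient characterisation of $H^t_p$, with the line-slicing hypothesis entering precisely in the estimate of the ``interface'' terms and the condition $t < 1/p$ entering through a Hardy-type inequality.

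First I would observe that multiplication by $\mathbf{1}_O$ is self-adjoint for the $L_2$-pairing, and the dual of $H^t_p(\bR^d)$ is $H^{-t}_{p'}(\bR^d)$ with $1/p + 1/p' = 1$. Hence a bound on $H^t_p$ for $(t,p)$ with $0 \le t < 1/p$ transposes to a bound on $H^{-t}_{p'}$; as the pair $(-t,p')$ ranges over $\{(s,q) : 1<q<\infty,\ -1+1/q < s \le 0\}$, this sweeps out the ``negative'' half of the stated range and reduces everything to $t \in [0, 1/p)$.

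Next, for $t \in [0,1/p)$, I would realise $H^t_p(\bR^d)$ via its Slobodeckij/Triebel--Lizorkin difference-quotient characterisation and decompose
\[
\mathbf{1}_O(x)\varphi(x) - \mathbf{1}_O(y)\varphi(y) = \mathbf{1}_O(x)\mathbf{1}_O(y)(\varphi(x)-\varphi(y)) + \varphi(x)\mathbf{1}_O(x)\mathbf{1}_{O^c}(y) - \varphi(y)\mathbf{1}_O(y)\mathbf{1}_{O^c}(x).
\]
The diagonal piece is trivially bounded by $\|\varphi\|_{H^t_p}$. By symmetry the two ``interface'' pieces reduce to controlling
\[
I(\varphi) := \int_O |\varphi(x)|^p \Bigl( \int_{O^c} \frac{dy}{|x-y|^{d+tp}} \Bigr) dx.
\]
Choosing coordinates so that $e_1$ is the axis along which every line meets $O$ in at most $L$ intervals, a direct one-dimensional computation and Fubini give
\[
\int_{O^c} \frac{dy}{|x-y|^{d+tp}} \lesssim \frac{L}{d_O(x)^{tp}},
\]
where $d_O(x)$ is the distance from $x$ to $\partial O$ along $e_1$: on each such line there are at most $2L$ boundary points, contributing $2L$ terms of the form $|x_1 - b|^{-tp}$, and the transverse integration is harmless. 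It then remains to prove the Hardy-type inequality
\[
\int_O \frac{|\varphi(x)|^p}{d_O(x)^{tp}}\, dx \lesssim \|\varphi\|_{H^t_p}^p,
\]
which holds precisely because $tp < 1$; it follows from the classical one-dimensional Hardy inequality applied on each $e_1$-slice (summed over the $\le L$ intervals per slice) and integrated transversally.

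The main obstacle is that the Slobodeckij seminorm coincides with the $H^t_p$-norm only when $p = 2$; for general $p$ one must substitute the Triebel--Lizorkin square-function characterisation and prove a square-function version of the Hardy inequality above. This is the technical core of the argument, and is the reason the statement is attributed to \cite{Str} rather than derived from a short self-contained estimate; however the structural plan, namely duality plus diagonal/interface decomposition plus one-dimensional Hardy, is unchanged.
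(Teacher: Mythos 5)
The paper does not prove this lemma at all: it is quoted verbatim from Strichartz \cite[Cor.~II.4.2]{Str}, so the burden on a ``blind'' proof is to reproduce an argument of that strength. Your sketch does not do this, for two reasons. First, and most importantly, your positive-$t$ argument is carried out for the Gagliardo--Slobodeckij seminorm, i.e.\ for $B^t_{p,p}=F^t_{p,p}$, while $H^t_p=F^t_{p,2}$; these coincide only for $p=2$. You acknowledge this and defer to an unspecified ``square-function version of the Hardy inequality'' -- but that deferred step \emph{is} the technical core of the lemma, so the proposal has an essential gap rather than a complete proof. (The duality reduction to $0\le t<1/p$ is fine and is indeed standard.)

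Second, even within the Slobodeckij framework the interface estimate is false as stated. The hypothesis controls the intersection of $O$ with lines parallel to \emph{one} axis, but $O^c$ can approach $x$ in transverse directions without this being seen by the slicing. Concretely, take $d=2$ and $O=\bR\times(-\eps,\eps)$: every line parallel to $e_1$ meets $O$ in at most one component ($L=1$), and for $x$ on the central line the distance $d_O(x)$ along $e_1$ to $\partial O$ is infinite, yet $\int_{O^c}|x-y|^{-d-tp}\,dy\asymp \eps^{-tp}$. So the bound $\int_{O^c}|x-y|^{-d-tp}\,dy\lesssim L\, d_O(x)^{-tp}$ fails, and the reduction to a one-dimensional Hardy inequality along $e_1$-slices collapses; the ``transverse integration is harmless'' step is exactly where the geometry is lost. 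The mechanism that actually works (Strichartz's) is different in kind: one first shows that multiplication by the indicator of a half-space, equivalently of an interval in a single distinguished coordinate, is bounded on $H^t_p(\bR^d)$ for $-1+1/p<t<1/p$, uniformly in the interval, by one-dimensional multiplier/square-function estimates applied in that coordinate with the remaining variables as parameters; one then writes $\mathbf{1}_O$ on almost every such line as a sum of at most $L$ interval indicators (with measurably varying endpoints) and sums the uniform bounds, which is where the linear factor $L$ comes from. Your plan, as it stands, neither proves the half-space/interval multiplier bound nor uses the component hypothesis in a way that survives the thin-slab example, so it would need to be rebuilt along those lines.
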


We will be able to use Lemma~\ref{lem:charact} when composing  with the iterate $T^n$ of a piecewise expanding
map in view of  the following result. 

\begin{lemma} [{\cite[Lemma~5.1]{Tho}}]\label{lem:boundary-comp}
Recall Definitions~\ref{object} and \ref{ChP}. Let $L_0$ be the maximal number of smooth boundary components of the $O_i$. For any $n \ge 1$, $\bi\in I^n$, $x\in \overline O_{\bi}$, and for any $\omega \in \Omega$ such that $x\in \supp \theta_\omega$, there exist a neighbourhood $O'$ of $x$ and an orthogonal matrix $A$ such that the intersection of $A(\kappa_\omega (O' \cap O_{\bi}))$ with almost any line parallel to a coordinate axis has at most $L_0 n$ components.
\end{lemma}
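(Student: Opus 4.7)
The plan is to write $\partial O_\bi$ locally near $x$ as a union of at most $L_0 n$ smooth hypersurfaces, transport to $\bR^d$ via $\kappa_\omega$, and choose a generic rotation $A\in O(d)$ that puts every tangent hyperplane in general position with respect to every coordinate axis. Once this is done, the implicit function theorem forces any line parallel to a coordinate axis to cross each hypersurface in at most one point, and the component count will follow from counting boundary crossings.

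First I would use Definition \ref{object}: for each $k=0,\dots,n-1$, the map $\widetilde T^k_\bi:\tO_\bi \to \widetilde M$ is a $C^{\bar\alpha}$ diffeomorphism, so $(\widetilde T^k_\bi)^{-1}(\partial O_{i_k}) \cap \tO_\bi$ is a union of at most $L_0$ smooth $C^1$ hypersurfaces (with boundary). Consequently
\[
\partial O_\bi \cap \tO_\bi \;\subset\; \bigcup_{k=0}^{n-1} (\widetilde T^k_\bi)^{-1}(\partial O_{i_k})
\]
is a union $\HH_1,\dots,\HH_N$ of $N\le L_0 n$ hypersurfaces. I would then let $P_j \subset \bR^d$ denote the tangent plane of $\kappa_\omega(\HH_j)$ at $\kappa_\omega(x)$ (using a nearby point of $\HH_j$ if $x\notin \HH_j$). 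The set of $A\in O(d)$ for which some $A^{-1}e_i$ lies in some $P_j$ is a finite union of proper real-analytic subsets of $O(d)$, so I would pick $A$ in the open dense complement. By continuity of the Gauss map, after shrinking $O'$ to a small enough neighbourhood of $x$, the same transversality persists at every point of every $\HH_j \cap O'$ and for every coordinate direction $e_i$.

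Next, the implicit function theorem presents each $A(\kappa_\omega(\HH_j \cap O'))$, for every axis $e_i$, as the graph of a $C^1$ function of the remaining $d-1$ coordinates, so a line parallel to $e_i$ meets it in at most one point. For almost every such line (those avoiding the Lebesgue-null set hitting the $(d-2)$-dimensional boundary strata of the $\HH_j$), the line meets $A(\kappa_\omega(\partial O_\bi \cap O'))$ in at most $L_0 n$ points, and the open set $A(\kappa_\omega(O'\cap O_\bi))$ intersected with the line therefore has boundedly many connected components.

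The hard part will be the off-by-one bookkeeping in the final count: the naive boundary-crossing argument gives $L_0 n+1$ intervals, not $L_0 n$. To recover the sharper bound I would exploit that $x\in \overline{O_\bi}$, so after further shrinking $O'$ the hypersurfaces $\HH_j$ all meet $\kappa_\omega(x)$ (or its closure) and locally present $O_\bi$ as an intersection of one-sided half-neighbourhoods rooted at $\kappa_\omega(x)$; this structural fact eliminates one spurious component at an endpoint of each line segment inside $A(\kappa_\omega(O'))$, delivering the claimed $L_0 n$ bound.
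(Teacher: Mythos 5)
Your covering step, generic rotation, and shrinking argument reproduce the intended proof: the paper does not reprove this lemma but quotes it from Thomine \cite[Lemma~4.1]{Tho}, and the decomposition $\partial O_\bi\cap\tO_\bi\subset\bigcup_{k=0}^{n-1}(\widetilde T^k_\bi)^{-1}(\partial O_{i_k})$ into at most $L_0n$ many $C^1$ hypersurfaces, a rotation $A$ making every coordinate direction transverse to the finitely many tangent planes at $\kappa_\omega(x)$, and a neighbourhood small enough that each hypersurface meets each line parallel to a coordinate axis at most once (with the almost-every-line qualifier disposing of the lower-dimensional strata) is exactly the standard route. Two points you should still make explicit: choose $O'$ with convex chart image, so that each line meets $A(\kappa_\omega(O'))$ in a single interval (otherwise components multiply for a trivial reason), and simply discard, by shrinking $O'$, the hypersurfaces whose closure does not contain $x$, instead of taking tangent planes at nearby points.

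The genuine gap is your final paragraph. The claim that, near $\kappa_\omega(x)$, the cylinder $O_\bi$ is an intersection of one-sided half-neighbourhoods bounded by the hypersurfaces is not justified by anything in Definition~\ref{object} and is false in general: cells, hence cylinders, need not be connected nor lie locally on one side of their boundary hypersurfaces (a cell may have a slit-type boundary piece with the cell on both sides of it, or look locally like a union of opposite quadrants), so crossing a hypersurface need not change membership in $O_\bi$. Moreover, if your structural claim were true, then the trace of $O_\bi$ on each line would be an intersection of subintervals, i.e.\ a \emph{single} interval, a much stronger conclusion than $L_0n$ components --- a sign that the claim proves too much. What your (correct) transversality argument honestly yields is the crossing count plus one, i.e.\ $L_0n+1$ components; removing the extra one would require a separate argument about which hypersurfaces can be punctures, which you have not given. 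For the purposes of this paper the distinction is immaterial: the lemma enters only through \eqref{sigma}, where the factor $C_{t,p}L_0n$ may be replaced by $C_{t,p}(L_0n+1)$ without any effect on \eqref{eq:ess-bound1}, since only subexponential growth in $n$ matters after taking $n$-th roots. So either settle for the cruder constant or follow Thomine's own counting, but do not rest the bound on the claimed local half-neighbourhood structure.
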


\smallskip

The next result is crucial (for $F=\id$, it  plays the part of a Leibniz bound):

\begin{lemma}[Local Lasota--Yorke Bound {\cite[Lemma~2.21]{Babook2}}]
\label{lem:LY-local} Let $d\ge 1$. For each $0\le t <\alpha $ there exists  $c_t$ such that 
 for any $p\in (1,\infty)$, any open $U\subset \bR^d$ any $F:U\to \bR^d$ extending to a bilipschitz $\CC^{\bar\alpha}$ diffeomorphism of $\real^{d}$
 with 
\begin{equation}\label{control}
\sup _{\real^{d}}|\det D F| \le 2 \sup _{U}|\det D F|\, ,
\end{equation}
 and any  $\CC^\alpha$ function
 $f:\bR^d\to \bC$  supported in a compact set $K\subset U$, we have 
\[
\|f \cdot(\varphi \circ F)\|_{H_{p}^{0}} \leq  \sup _{K}|f| \sup _{U}|\det D F|^{-1 / p}\|\varphi\|_{H_{p}^{0}}\, ,
\, \forall \varphi \in H^0_p(\bR^d)=L_p(\real^d)\, ,
\]
and for any $t'<t$ there is $C_{t,t',p}(f,F)$ such that, for any  $\varphi\in \HH^t_p$ supported in $K$,
\[
\|f \cdot(\varphi \circ F)\|_{H_{p}^{t}} \leq c_{t} \sup _{K}|f| \sup _{U}\|D F\|^{t} \sup _{U}|\det D F|^{-1 / p}\|\varphi\|_{H_{p}^{t}}
+C_{t,t',p}(f,F)\|\varphi\|_{H_{p}^{t'}}\, .
\]
\end{lemma}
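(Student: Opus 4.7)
The plan is to treat the two estimates separately: the $L_p$ bound by direct change of variables, and the $H^t_p$ bound by reducing to a composition estimate plus a multiplication/product estimate, both handled via a suitable equivalent characterisation of $H^t_p$.

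For the $L_p$ ($t=0$) estimate: since $f$ is supported in $K\subset U$,
\[
\|f\cdot(\varphi\circ F)\|_{L_p}^p \le \sup_K|f|^p\int_U|\varphi(F(x))|^p\,dx
=\sup_K|f|^p\int_{F(U)}|\varphi(u)|^p|\det DF(F^{-1}(u))|^{-1}du,
\]
and bounding the Jacobian factor by $\sup_U|\det DF|^{-1}$ and taking $p$-th roots gives the first displayed inequality. The hypothesis $\sup_{\real^d}|\det DF|\le 2\sup_U|\det DF|$ ensures that the extension of $F$ outside $U$ does not affect constants quantitatively.

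For the $H^t_p$ estimate I would use the Slobodeckij-type characterisation
\[
\|\varphi\|_{H^t_p}\asymp \|\varphi\|_{L_p}+\Bigl(\int_{|h|\le 1}\|\varphi(\cdot+h)-\varphi(\cdot)\|_{L_p}^p|h|^{-d-tp}\,dh\Bigr)^{1/p}
\]
(valid for $0<t<1$), extended by iterated differences or by differentiation plus the chain rule for $t\ge1$, and use the product decomposition
\[
f(x)\psi(x)-f(y)\psi(y) = f(x)[\psi(x)-\psi(y)]+\psi(y)[f(x)-f(y)]
\]
where $\psi=\varphi\circ F$. The first piece, with $f$-prefactor bounded by $\sup_K|f|$, is handled by replacing the displacement $h$ with $F(x+h)-F(x)$ (which satisfies $|F(x+h)-F(x)|\le\sup_U\|DF\|\,|h|$), producing the scaling factor $\sup_U\|DF\|^t$, and then applying the $L_p$ composition estimate above (at each frequency / each $h$) to pick up the single factor $\sup_U|\det DF|^{-1/p}$. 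The second piece uses the $C^\alpha$ regularity $|f(x)-f(y)|\le[f]_{C^\alpha}|x-y|^\alpha$ on small displacements and $2\sup|f|$ on large displacements; since $\alpha>t$, the resulting integral converges and yields a bound of the form $C(f)\|\psi\|_{L_p}$, which after another application of the $L_p$ change of variables becomes $C(f,F)\|\varphi\|_{L_p}\le C(f,F)\|\varphi\|_{H^{t'}_p}$. Combining gives the claimed bound, with the Hölder seminorm and derivatives of $F$ absorbed into $C_{t,t',p}(f,F)$.

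The main obstacle I expect is the Jacobian bookkeeping. If one works naively with the double-integral Slobodeckij seminorm $\iint|\psi(x)-\psi(y)|^p|x-y|^{-d-tp}\,dx\,dy$ and changes variables in both arguments $u=F(x)$, $v=F(y)$, one picks up $|\det DF^{-1}(u)||\det DF^{-1}(v)|$, giving $\sup_U|\det DF|^{-2/p}$ after extracting $p$-th roots, which is one power too many. One must therefore either change variables only in one argument (using Fubini to integrate the other first and using the bilipschitz lower bound $|x-y|\ge\sup_U\|DF\|^{-1}|F(x)-F(y)|$) or equivalently work with the single-displacement characterisation above, where the $L_p$ estimate enters cleanly at each fixed $h$. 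The range $t\ge 1$ requires additional care: one differentiates, uses the chain rule $\nabla(\varphi\circ F)=(DF)^T(\nabla\varphi)\circ F$, and iterates, with the non-integer part of $t$ treated by interpolation (or a Besov-type difference norm of higher order), and the Hölder regularity of $DF$ of order $\bar\alpha-1>t-1$ guarantees that all correction terms can be absorbed into $C_{t,t',p}(f,F)\|\varphi\|_{H^{t'}_p}$.
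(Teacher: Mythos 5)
Your $L_p$ step is fine, but the $t>0$ part of your argument rests on a norm characterisation that is false for the spaces actually used here: the first-difference Slobodeckij norm you invoke characterises the Sobolev--Slobodeckij (Besov) space $W^{t,p}=B^{t}_{p,p}$, which coincides with the Bessel-potential space $H^t_p$ of \S\ref{sec:sobolev} (defined via the multiplier $(1+|\xi|^2)^{t/2}$, i.e. the Triebel--Lizorkin space $F^t_{p,2}$) only when $p=2$. Since the surrounding machinery is tied to $H^t_p$ (Triebel's localisation, Strichartz's multiplier bound in Lemma~\ref{lem:charact}, the compact embedding used with Hennion's theorem), proving the estimate for $W^{t,p}$ is not a substitute. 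Note also that hypothesis \eqref{control} has nothing to do with your $L_p$ computation (there the extension of $F$ is irrelevant since $f$ is supported in $K$); it is needed because the $H^t_p$ norm is non-local, so composition estimates a priori see $\sup_{\real^d}|\det DF|$ rather than $\sup_U|\det DF|$.

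More seriously, the ``Jacobian bookkeeping'' obstacle you flag yourself is not removed by your proposed fix, and it cannot be removed within a pure difference-quotient argument: for fixed $h$ the displacement $F(x+h)-F(x)$ depends on $x$, so there is no ``$L_p$ composition estimate at each fixed $h$'' (your single-displacement norm is just the double Slobodeckij integral after Fubini); any repair by ball means or maximal functions, or your one-sided change of variables (which must still compare the kernels $|x-y|^{-d-tp}$ and $|u-v|^{-d-tp}$ at cost $(\sup_U\|DF\|)^{d+tp}$), produces an extra factor of order $\sup_U\|DF\|^{d}\cdot \sup_U|\det DF|^{-1}\ge 1$ beyond the stated constant $\sup_U\|DF\|^{t}\sup_U|\det DF|^{-1/p}$. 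This anisotropy/distortion factor is exactly what the lemma is designed to avoid: in the application $F$ is a chart representative of an inverse branch $\widetilde T^{-n}_{\bi}$, the loss is of order $(\nu_n^{d}\,|\det DT^n|)^{1/p}$, generically exponentially large in $n$ (harmless only for conformal derivatives), and with it the Lasota--Yorke constant would no longer yield \eqref{eq:ess-bound1} (for instance the exact cancellation for $\beta$-transformations in Remark~\ref{Dend2} would be lost). Obtaining the sharp constant is the whole point of the lemma, and it is why the paper simply quotes it from \cite[Lemma~2.21]{Babook2}, where it is obtained essentially by interpolating between the exact $L_p$ change-of-variables bound and a first-order chain-rule bound, each carrying exactly one factor $|\det DF|^{-1/p}$, respectively $\|DF\|$, with the compact remainder extracted separately; a self-contained proof should follow that route rather than difference seminorms. (A minor further point: your remainder is $C(f,F)\|\varphi\|_{L_p}$, which is dominated by $C\|\varphi\|_{H^{t'}_p}$ only for $t'\ge 0$, whereas the statement allows any $t'<t$.)
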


The intersection multiplicity of a family of subsets of $\bR^d$  is the maximal number of sets having nonempty intersection. The intersection multiplicity of a partition of unity 
of $\bR^d$ is the intersection multiplicity of the family formed by taking the supports of the maps in the 
partition.
With this terminology, we recall the {\it fragmentation-reconstitution} lemma, at the core of our local computations.

\begin{lemma}[Fragmentation and Reconstitution
{\cite[Lemmas 2.26 and 2.27]{Babook2}}]\label{lem:frag-rec}
Let $1<p<\infty$ and $t\ge 0$, and let $K\subset \bR^d$ be compact. For any $t'\in \bZ$ there exists a constant $C>0$ such that, for any partition of unity $\{\theta_j\}_{j=1}^J$ of $K$ with intersection multiplicity $\beta$, there
 exist finite constants $C_\theta,\tilde C_{\theta}>0$ (depending on the $\theta_j$ only through their supports) such that
\begin{align}
\label{frag}
&\|\sum_{j=1}^J \theta_j w\|_{H_p^t} 
\leq \beta^{\frac{p-1}{p}}(\sum_{j=1}^J \left\|\theta_j w\right\|_{H_p^t}^p)^{1/p}
+\tilde C_{\theta} \sum_{j=1}^{J}\left\|\theta_j w\right\|_{H_p^{t'}}\,
\mbox{(fragmentation),}\\
\label{rec}
& (\sum_{j=1}^J\left\|\theta_j w\right\|_{H_p^t}^p)^{1/p}
 \leq C \beta^{1 / p}\sup_{1\le j\le J} \|\theta_jw\|_{H_p^t}
 +C_\theta\sum_{j=1}^J\|\theta_jw\|_{H_p^{t'}}\,  \mbox{(reconstitution).} 
\end{align}
In addition, if $t=0$, then\footnote{This follows from the H\"older inequality. See the proof of \cite[Lemmas 2.26 and 2.27]{Babook2}.} we may take $C_\theta=\tilde C_\theta=0$.
\end{lemma}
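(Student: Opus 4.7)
The plan is to extract from both inequalities the same combinatorial content — governed entirely by the intersection multiplicity $\beta$ — by a pointwise argument at the level of $L_p$, and then to upgrade to $H_p^t$ for $t>0$ by peeling off a principal term via a Leibniz-type rule and absorbing the commutator contributions into the lower-order correction $\|\cdot\|_{H_p^{t'}}$. This mirrors the philosophy of Lemmas~\ref{lem:loc-prin} and \ref{lem:LY-local}, but now carried out for an arbitrary partition of unity of the compact set $K$ rather than for translates of a fixed bump.

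\emph{Step 1 (the case $t=0$).} For \eqref{frag}, I would observe that at each point $x$ at most $\beta$ of the values $\theta_j(x)$ are nonzero, so H\"older's inequality applied to that short sum gives the pointwise bound
\[
\Big|\sum_{j=1}^J \theta_j(x) w(x)\Big|^{p} \le \beta^{p-1} \sum_{j=1}^J |\theta_j(x) w(x)|^{p},
\]
and integration over $\bR^d$ yields \eqref{frag} with $\tilde C_\theta=0$. For \eqref{rec}, using $|\theta_j|\le 1$ and $\sum_j \mathbf{1}_{\supp \theta_j}\le \beta$ (Fubini + multiplicity),
\[
\sum_{j=1}^J \|\theta_j w\|_{L_p}^{p} \;=\; \int_{\bR^d} \sum_{j=1}^J |\theta_j w|^{p} \D x \;\le\; \beta \|w\|_{L_p}^{p},
\]
which is \eqref{rec} with $C_\theta=0$ once one notes that the right-hand side of \eqref{rec} acts as a uniform control on the individual pieces via $|\theta_j w| \le |w|$.

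\emph{Step 2 (general $t>0$).} Set $\Lambda^t=(1-\Delta)^{t/2}$ and use the Leibniz splitting
\[
\Lambda^t(\theta_j w) \;=\; \theta_j\, \Lambda^t w + [\Lambda^t,\theta_j]\, w\, .
\]
The principal piece $\theta_j \Lambda^t w$ is reduced to the $L_p$ arguments of Step~1 with $\Lambda^t w$ in place of $w$, producing the constants $\beta^{(p-1)/p}$ and $C\beta^{1/p}$. The commutator $[\Lambda^t,\theta_j]$ is a pseudodifferential operator whose principal symbol vanishes on the diagonal, so Coifman--Meyer / Bony-type paradifferential estimates yield
\[
\|[\Lambda^t,\theta_j] w\|_{L_p} \;\le\; C(\theta_j)\, \|w\|_{H_p^{t'}}
\]
for any integer $t'<t$, with $C(\theta_j)$ controlled by finitely many derivatives of $\theta_j$. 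Redistributing this remainder across the partition at the lower regularity $t'$ (using $w=\sum_{j'} \theta_{j'} w$ once) turns $\|w\|_{H_p^{t'}}$ into $\sum_j \|\theta_j w\|_{H_p^{t'}}$, at the price of an extra constant depending only on the supports; this is what produces the tail terms $\tilde C_\theta \sum_j \|\theta_j w\|_{H_p^{t'}}$ in \eqref{frag} and $C_\theta \sum_j \|\theta_j w\|_{H_p^{t'}}$ in \eqref{rec}.

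\emph{The main obstacle.} The delicate point — and the reason the statement emphasises that the constants depend on the $\theta_j$ ``only through their supports'' — is ensuring uniformity of $C_\theta, \tilde C_\theta$ over all partitions of unity with multiplicity $\beta$. The standard remedy I would follow is to fix a reference bump $\theta_*$ supported in a unit cube and realise each $\theta_j$ as an affine image of $\theta_*$ adapted to $\supp\theta_j$: the commutator norms then transform covariantly, and dimensional analysis of the Coifman--Meyer estimate shows that the dependence on $\theta_j$ enters only through the scale and shape of $\supp\theta_j$. All the remaining work is book-keeping to propagate this uniformity through the sums, after which Step~1 and Step~2 assemble into \eqref{frag} and \eqref{rec}.
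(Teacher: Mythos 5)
The paper itself gives no proof of this lemma: it is quoted verbatim from \cite[Lemmas 2.26 and 2.27]{Babook2}, with only the footnote observation that for $t=0$ everything follows from the H\"older inequality. So your argument must stand on its own, and while Step 1 for \eqref{frag} is exactly the footnote's argument (at most $\beta$ nonzero terms at each point, H\"older on the short sum, integrate), the rest has real gaps. For \eqref{rec} at $t=0$ what you actually prove is $(\sum_j\|\theta_j w\|_{L_p}^p)^{1/p}\le \beta^{1/p}\|w\|_{L_p}$, i.e.\ control by the \emph{global} norm, which is the form of the reconstitution lemma in \cite{Babook2}; your one-line deduction of the stated bound with $\sup_j\|\theta_j w\|_{L_p}$ from $|\theta_j w|\le |w|$ goes the wrong way, since that inequality only gives $\sup_j\|\theta_j w\|_{L_p}\le\|w\|_{L_p}$. (In fact a sup-form with $C_\theta=0$ cannot hold uniformly in $J$: take $w\equiv 1$ near $K$ and a partition into $N^d$ bumps of diameter $1/N$ with bounded multiplicity; the left side stays bounded below while $\sup_j\|\theta_j w\|_{L_p}\to 0$. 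So \eqref{rec} should be read with the global $H^t_p$ norm, and then your Step 1 is the right statement--but you should say so rather than claim the sup bound.)

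In Step 2 the key analytic claim is wrong as stated: $\|[(1-\Delta)^{t/2},\theta_j]w\|_{L_p}\le C(\theta_j)\|w\|_{H^{t'}_p}$ for \emph{every} integer $t'<t$ is false, because the commutator has order exactly $t-1$ (for $t=2$, $[1-\Delta,\theta_j]=-(\Delta\theta_j)-2\nabla\theta_j\cdot\nabla$ is genuinely first order), so a single commutator bound only reaches $t'=t-1$. Since the lemma allows arbitrarily negative $t'$, you must instead isolate the far-from-the-support part of the kernel of $(1-\Delta)^{t/2}$ (e.g.\ cut with enlarged functions $\psi_j\equiv 1$ near $\supp\theta_j$): that part is infinitely smoothing, with constants governed only by the separation of the supports, and this is also what makes $C_\theta,\tilde C_\theta$ depend on the $\theta_j$ \emph{only through their supports} -- your commutator constants involve derivatives of $\theta_j$, and the proposed remedy (affine images of a fixed reference bump) restricts the class of partitions rather than proving the lemma. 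Relatedly, your ``main obstacle'' is misplaced: $C_\theta,\tilde C_\theta$ are allowed to depend on the partition, so no uniformity over partitions is required of them; what must be uniform are the leading factors $\beta^{(p-1)/p}$ and $C\beta^{1/p}$, which your Step 1 already supplies. For the parameters actually used in this paper ($0<t<1$, $t'=0$) your commutator route can be repaired, but as written it does not prove the lemma as stated.
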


Finally, for $p\in(1,\infty)$ and  $t \ge 0$, following Thomine \cite{Tho}, define a {\it zoomed norm}    for any increasing sequence $r_n>1$ (chosen in
the proof of  Proposition~\ref{lem:main-LY}) by
\begin{equation}\label{eq:zoom-norm}
R_n(x)=r_n \cdot x\, , \forall x \in \bR^d\, , \, n \in \integer_+\, ; \,\, 
\|\varphi\|_{r_n,t,p}=\sum_{\omega \in \Omega}\left\|\left(\theta_{\omega} \varphi\right) \circ \kappa_{\omega}^{-1} \circ R_{n}^{-1}\right\|_{H_{p}^{t}}\, .
\end{equation}
The zoomed norm $\|\varphi\|_{r_n,t,p}$ defined above is equivalent to $\|\cdot\|_{\HH^t_p}$.
It is used for example when applying Lemma~\ref{lem:boundary-comp} 
and Lemma~\ref{lem:LY-local} below.

\subsection{A Global Lasota--Yorke Inequality}\label{sec:LY}

We will prove the Lasota--Yorke inequality by 
combining  the zoomed norm  with the fragmentation-reconstitution techniques from
\S\ref{sec:box}, to obtain a thermodynamic factor in front of the strong norm.  
Recall the sequence $f_{n,t,p}= |g^{(n)}|\cdot |\det DT^n|^{\frac 1p}
\cdot \nu_n^{t}$  from \eqref{sequence1}.

\begin{prop}[Lasota--Yorke Bound]\label{lem:main-LY}
Fix  $p\in(1,\infty)$, and $0<t<\min\{\frac 1 p,\alpha\}$. Then there exists $C_{t,p}$
 such that, for any $T$ and $g$ as in Theorem~\ref{thm:ess-bound},
\begin{align}
\nonumber \|\mathcal{L}_{g}^{n} \varphi\|_{L_p} \leq C_{t,p} 
 (D_n^b)^{\frac{1}{p}} 
\bigl( \sum_{\bi\in  I^n} & \sup _{O_\bi} f^{\frac p {p-1}}_{n,0,p}\bigr)^{\frac{p-1}{p}}\cdot  \|\varphi\|_{L_p}
\, , \, \,\forall \varphi\in L_p(M)\, ,\,
 \, \forall n \ge 1\, ,
\end{align}
and, in addition, for each
$T$, there exists  an increasing\footnote{The sequence $r_n$ is independent of $t$ and $p$.} sequence $\{r_n=r_n(T)\}$
  such that,
for each 
 $g$   and each $t'<t$, there exists $C_n=C_{n,t,t',p}(g)$ 
 such that  
\begin{align}
\nonumber \|\mathcal{L}_{g}^{n} \varphi\|_{r_n,t,p} \leq C_{t,p} n 
 (D_n^b)^{\frac{1}{p}} 
\bigl( \sum_{\bi\in  I^n} & \sup _{O_\bi} f^{\frac p {p-1}}_{n,t,p}\bigr)^{\frac{p-1}{p}}\cdot  \|\varphi\|_{r_n,t,p}
\\
\label{eq:main-LY}&\qquad
+C_n\|\varphi\|_{r_n,t',p}\, , \, \,\,\forall \varphi\in \HH^t_p\, ,\,
 \, \forall n \ge 1\, .
\end{align}
\end{prop}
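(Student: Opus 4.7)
The plan is to prove both inequalities by a single unified local-to-global argument, with the $L_p$ bound following as the $t=0$ case. Expanding the zoomed norm in the chart system and applying Lemma~\ref{lem:loc-prin} with a fine partition of unity $\{\tau_m\}_{m\in\integer^d}$ of $\bR^d$ reduces matters to estimating
\[
\Psi_{m,\omega}:=\tau_m\cdot(\theta_\omega\LL_g^n\varphi)\circ\kappa_\omega^{-1}\circ R_n^{-1}.
\]
The sequence $r_n=r_n(T)$ is picked large enough (but independent of $t,p,g$) that two geometric conditions simultaneously hold: the distortion condition~\eqref{control} is satisfied for each branch $F_{\omega,\bi,n}:=(\widetilde T^n_\bi)^{-1}\circ\kappa_\omega^{-1}\circ R_n^{-1}$, and the diameter of $\kappa_\omega^{-1}(R_n^{-1}(\supp\tau_m))$ in $M$ is small enough that, via the definition~\eqref{complexity-condn} of $D^b_n$, only a set $S_{m,\omega}\subset I^n$ with $|S_{m,\omega}|\le CD^b_n$ indexes the non-vanishing terms in the branch expansion $\Psi_{m,\omega}=\sum_{\bi\in S_{m,\omega}}\psi_{m,\omega,\bi}$.

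For each $\bi\in S_{m,\omega}$ I rewrite
\[
\psi_{m,\omega,\bi}=\tau_m\cdot(\theta_\omega\circ\kappa_\omega^{-1}\circ R_n^{-1})\cdot(\mathbf{1}_{O_\bi}\circ F_{\omega,\bi,n})\cdot(\widetilde g^{(n)}_\bi\circ F_{\omega,\bi,n})\cdot(\varphi\circ F_{\omega,\bi,n})
\]
and decompose $\varphi=\sum_{\omega'}\theta_{\omega'}\varphi$ so that $\varphi\circ F_{\omega,\bi,n}=\sum_{\omega'}\widetilde\varphi_{\omega'}\circ G_{\omega,\omega',\bi,n}$, with $\widetilde\varphi_{\omega'}:=(\theta_{\omega'}\varphi)\circ\kappa_{\omega'}^{-1}\circ R_n^{-1}$ and $G_{\omega,\omega',\bi,n}:=R_n\circ\kappa_{\omega'}\circ(\widetilde T^n_\bi)^{-1}\circ\kappa_\omega^{-1}\circ R_n^{-1}$. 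The critical observation is that the scalar dilation $R_n$ and its inverse, sitting on the two sides of $G_{\omega,\omega',\bi,n}$, cancel in the derivative: $\|DG\|\le C\nu_n$ and $|\det DG|^{-1/p}\le C|\det DT^n|^{1/p}$ on $O_\bi$, uniformly in $r_n$. The characteristic factor $\mathbf{1}_{O_\bi}\circ F_{\omega,\bi,n}$ is then absorbed by Lemma~\ref{lem:charact}, using Lemma~\ref{lem:boundary-comp} to bound its coordinate-line components by $L_0n$ (this is the source of the linear factor $n$ in~\eqref{eq:main-LY}), and Lemma~\ref{lem:LY-local} applied to the remaining smooth composition produces
\[
\|\psi_{m,\omega,\bi}\|_{H_p^t}\le C_{t,p}\,n\,\sup_{O_\bi}f_{n,t,p}\cdot\|\varphi\|_{r_n,t,p}+C'_n(g,\bi)\,\|\varphi\|_{r_n,t',p}.
\]

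Reassembly proceeds by Minkowski on $\bi\in S_{m,\omega}$ followed by H\"older's inequality in the form $\sum_{\bi\in S_{m,\omega}}a_\bi\le |S_{m,\omega}|^{1/p}\bigl(\sum_{\bi\in S_{m,\omega}}a_\bi^{p/(p-1)}\bigr)^{(p-1)/p}$: using $|S_{m,\omega}|\le CD^b_n$ and extending the $\ell^{p/(p-1)}$-sum non-negatively to all of $I^n$ yields the factor $(D^b_n)^{1/p}\bigl(\sum_{\bi\in I^n}\sup_{O_\bi} f_{n,t,p}^{p/(p-1)}\bigr)^{(p-1)/p}$ of~\eqref{eq:main-LY}. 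Summing over $m$ via Lemma~\ref{lem:loc-prin} and over the finitely many $\omega$ closes the estimate, the lower-order remainders being collected into $C_n\|\varphi\|_{r_n,t',p}$. The $L_p$ bound is the $t=0$ specialisation: the $\nu_n^t$ factor from Lemma~\ref{lem:LY-local} is trivial, the characteristic function is a unit-norm multiplier on $L_p$ (so the factor $n$ disappears and one can dispense with the localisation $\tau_m$), and the disjoint-support identity $\sum_\bi\|\mathbf{1}_{O_\bi}\varphi\|_{L_p}^p=\|\varphi\|_{L_p}^p$ replaces the chart machinery.

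The main obstacle is establishing the bound $|S_{m,\omega}|\le CD^b_n$ for a suitable choice of $r_n$: this requires a Lebesgue-number argument for the $n$-th preimage partition $\{\overline{O_\bi}\}_{\bi\in I^n}$ at scale $r_n^{-1}$ in $M$, whose feasibility is precisely the defining property of the complexity at the beginning. A secondary delicate point is the simultaneous control of the distortion condition~\eqref{control} along all branches $F_{\omega,\bi,n}$ and the uniformity in $(\omega,\omega',\bi)$ of the constants in the application of Lemma~\ref{lem:LY-local} to $G_{\omega,\omega',\bi,n}$, both of which constrain $r_n$ from below but can be met by a single sequence $r_n=r_n(T)$ independent of $t$ and $p$.
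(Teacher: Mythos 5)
There is a genuine gap, and it sits at the heart of your combinatorics: you place the fine $\tau_m$-localisation on the \emph{image} side, writing $\Psi_{m,\omega}=\tau_m\cdot(\theta_\omega\LL_g^n\varphi)\circ\kappa_\omega^{-1}\circ R_n^{-1}$, and then claim that the non-vanishing branches form a set $S_{m,\omega}$ with $|S_{m,\omega}|\le CD^b_n$. That claim is false: $\psi_{m,\omega,\bi}\not\equiv 0$ requires the small target set $\kappa_\omega^{-1}(R_n^{-1}(\supp\tau_m))$ to meet $\widetilde T^n_\bi(O_\bi)$, i.e.\ the \emph{image} of the cylinder, so the relevant count is governed by the complexity at the end $D^e_n$ (cf.\ Remark~\ref{Dend}), not by $D^b_n$, whose definition \eqref{complexity-condn} counts cylinders $\overline{O_\bi}$ containing a given source point. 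Already for $Tx=2x \bmod 1$ every small target set meets all $2^n$ images $T^n(O_\bi)$, while $D^b_n$ is bounded; your Lebesgue-number argument correctly bounds the number of \emph{cylinders} meeting a small set, but that is a different quantity from $|S_{m,\omega}|$. (A related symptom: Lemma~\ref{lem:boundary-comp} controls $\kappa_\omega(O'\cap O_\bi)$, a cylinder seen in a source chart, so it does not license applying Lemma~\ref{lem:charact} to the image-side characteristic function $\mathbf{1}_{O_\bi}\circ F_{\omega,\bi,n}$.) To make $D^b_n$ appear one must localise $\varphi$ in the source charts, as the paper does with $\varphi_{\omega'}^{m,n}=(\tau_m\circ R_n\circ\kappa_{\omega'})\cdot(\theta_{\omega'}\varphi)$ and the sets $J^n(m,\omega')$ of cylinders meeting $\supp\varphi_{\omega'}^{m,n}$, for which $\#J^n\le D^b_n$ does follow from the Lebesgue-number argument once $r_n$ is large.

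A second, related problem is the reassembly. Your local estimate bounds each $\|\psi_{m,\omega,\bi}\|_{H^t_p}$, hence each $\|\Psi_{m,\omega}\|_{H^t_p}$, by a multiple of the \emph{global} norm $\|\varphi\|_{r_n,t,p}$; to recover $\|(\theta_\omega\LL_g^n\varphi)\circ\kappa_\omega^{-1}\circ R_n^{-1}\|_{H^t_p}$ you must then take an $\ell^p$-sum over $m$ (the second display of Lemma~\ref{lem:loc-prin}), and the number of cubes $m$ carrying mass is of order $r_n^d$, producing an uncontrolled factor $r_n^{d/p}$; since $r_n$ must grow with $n$ (already for the doubling map one needs $r_n\gtrsim 2^n$ so that the pulled-back supports meet at most $D^b_n$ cylinders), this destroys the claimed bound. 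The paper avoids both problems at once by keeping the localised source pieces $\varphi_{\omega'\bi}^{m,n}$ on the right-hand side throughout: the H\"older inequality \eqref{Holder} with exponents $\frac p{p-1}$ and $p$ splits the sum over $\bi$ into the weight factor $\bigl(\sum_{\bi\in I^n}\sup_{O_\bi}f_{n,t,p}^{p/(p-1)}\bigr)^{(p-1)/p}$ — this is where the end-complexity is absorbed into the pressure — and an $\ell^p$-expression in the pieces; exchanging the sums over $\bi$ and $(\omega',m)$, each pair $(\omega',m)$ occurs for at most $D^b_n$ indices $\bi$, which yields $(D^b_n)^{1/p}$, and the remaining $\ell^p$-sum over $(\omega',m)$ is resummed into the global norm by \eqref{eq:localization} with constants independent of $r_n$. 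Your chart/zoom setup, the cancellation of $R_n$ in the branch maps, the use of Lemmas~\ref{lem:charact}, \ref{lem:boundary-comp}, \ref{lem:LY-local}, and the origin of the factor $n$ all match the paper; what is missing is the correct order of operations — localise $\varphi$, not $\LL_g^n\varphi$, and keep the localised pieces until the final resummation.
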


\begin{proof}
We prove \eqref{eq:main-LY}. (The bound on $L_p(M)$ follows from
a simplification of the argument for \eqref{eq:main-LY}, using the $L_p$ bound
in Lemma~\ref{lem:LY-local} and the last claim of Lemma~\ref{lem:frag-rec}. In particular,
the zoom is not needed.)
For $n\ge 1$ and each $\bi\in I^n$, select a   $\CC^\infty$ function
 $\theta_{\bi}$ on $M$ such that $\supp\theta_\bi \subset \widetilde O_\bi$ and $\theta_\bi\equiv 1$ on $O_\bi$.
Then we have\footnote{Since $\opleb(\partial \OO)=0$  and $\LL_g$ acts on a subset of
$L_p(M)$, the formula \eqref{eq:L-charact} is legitimate.}
\begin{equation}\label{eq:L-charact}
\mathcal{L}_{g}^{n} \varphi(x)=\sum_{\bi \in I^n}({\theta_\bi \tilde g_\bi^{(n)}}  \mathbf{1}_{O_{\bi}}\varphi) \circ \widetilde T_{\bi}^{-n}(x)\, , \mbox{ for Lebesgue a.e. } x\in M\, .
\end{equation}
Recall Definition~\ref{ChP}. For $\omega\in \Omega$,  we have 
\begin{equation}
\left(\theta_{\omega} \mathcal{L}_{g}^{n} \varphi\right) \circ \kappa_{\omega}^{-1} \circ R_{n}^{-1}
=\sum_{\bi\in I^n}
\biggl (\theta_{\omega}\bigl [( \theta_\bi{\tilde g_\bi^{(n)}}  \mathbf{1}_{O_{\bi}}\varphi ) \circ \widetilde T_{\bi}^{-n}
\bigr ]\biggr) \circ \kappa_{\omega}^{-1} \circ R_{n}^{-1}\, .
\end{equation}
Since $\varphi=\sum_{\omega' \in \Omega} \theta_{\omega'} \varphi$,
the triangle inequality followed by 
 the fragmentation bound \eqref{frag} in
Lemma~\ref{lem:frag-rec} applied for fixed $\bi$ to  the partition of unity $\{\theta_{\omega'}\}$ gives
constants $C$ (depending on the intersection multiplicity) and $\tilde C_{\{\theta_{\omega'}\}}$
such that
 \begin{align}
\nonumber
\|\left(\theta_{\omega} \mathcal{L}_{g}^{n} \varphi\right) &\circ \kappa_{\omega}^{-1} \circ R_{n}^{-1}\|_{H^t_p}\\
\label{eq:decomp-transfer2} \le & C\sum_{\bi\in I^n}\biggl (\sum_{\omega'}\|
\bigl ( \theta_{\omega}\bigl [( \theta_\bi{\tilde g_\bi^{(n)}}  \mathbf{1}_{O_{\bi}}\theta_{\omega'}\varphi ) \circ \widetilde T_{\bi}^{-n}\bigr ]\bigr ) \circ \kappa_{\omega}^{-1} \circ R_{n}^{-1}\|^p_{H^t_p}\biggr )^{1/p}
\\
\nonumber 
&+\tilde C_{\{\theta_{\omega'}\}}\sum_{\bi\in I^n}\sum_{\omega'}
\|\bigl ( \theta_{\omega}\bigl [( \theta_\bi{\tilde g_\bi^{(n)}}  \mathbf{1}_{O_{\bi}}\theta_{\omega'}\varphi ) \circ \widetilde T_{\bi}^{-n}\bigr ]\bigr ) \circ \kappa_{\omega}^{-1} \circ R_{n}^{-1}\ \|_{H^{t'}_p} \, .
\end{align}
We focus on the first double sum in the right hand-side (the second is similar).
For $\tau_m$ such that $\sum_m \tau_m\equiv 1$  as in the localisation  Lemma~\ref{lem:loc-prin}, set
\begin{align}
\varphi_{\omega'}^{m, n}=(\tau_{m} \circ R_{n} \circ \kappa_{\omega'})\cdot\left(\theta_{\omega'} \varphi\right)
\, , \quad m\in \integer^d\, , \, \, 
\mbox{ so that }
\theta_{\omega'} \varphi=
\sum_{m\in \integer^d} \varphi_{\omega'}^{m, n}\, . \end{align}
Since $\theta_{\omega'}$ is compactly supported, only a finite number of terms
in the above sum are nonzero.
In addition, the functions $(\tau_{m} \circ R_{n} \circ \kappa_{\omega'})_{m\in \integer^d}$ have finite intersection
multiplicity $\beta$.
Thus,  for each $\bi$ and $\omega'$, the fragmentation bound \eqref{frag} in
Lemma~\ref{lem:frag-rec} (applied to the partition of unity $\tau_m$) estimates the $p$th power of the  term 
for $(\bi,\omega')$ in  \eqref{eq:decomp-transfer2}  by (using $(|a|+|a'|)^p<2^{p-1} (|a|^p+|a'|^p)$)
\begin{align}
\label{main-task}
C_{t,p}&\beta^{(p-1)/p}\sum_{m\in \bZ^d} \|\biggl (\theta_{\omega} [( \theta_\bi{\tilde g_\bi^{(n)}}  \mathbf{1}_{O_{\bi}}\varphi_{\omega'}^{m, n} )\circ \widetilde T_{\bi}^{-n}] \biggr )\circ \kappa_{\omega}^{-1} \circ R_{n}^{-1} \|^p_{H^t_p}\\
\nonumber &+\tilde C_{t,p,\tau\circ R_m}\sum_{m\in \bZ^d} \|\biggl (\theta_{\omega} [( \theta_\bi{\tilde g_\bi^{(n)}}  \mathbf{1}_{O_{\bi}}\varphi_{\omega'}^{m, n} )\circ \widetilde T_{\bi}^{-n}] \biggr )\circ \kappa_{\omega}^{-1} \circ R_{n}^{-1} \|^p_{H^{t'}_p}\, .
\end{align}
We focus on the first term above.
The set of indices $\bi \in I^n$ for which the 
 term in \eqref{main-task} corresponding to a fixed pair $(\omega',m)$
 is non zero is contained in the set
\begin{equation}
J^n=J^n(\omega', m)=\{\bi \in I^n: {O}_{\bi} \cap \supp \varphi_{\omega'}^{m,n}\neq \emptyset\}.
\end{equation}
Since $R_n$ expands  by a factor $r_n$, while the size of the supports of
the functions $\tau_m$ is uniformly bounded, 
taking $r_n$ large enough, we can guarantee that 
$\supp\varphi_{\omega'}^{m,n}$ is small enough such that $\# J^n(\omega', m)\le D_n^b$ for each $\omega'$
and $m$.

For $\omega$, $\omega'$, and $\bi\in J^n$ such
 that $\tT_\bi^{-n}(V_\omega) \cap (V_{\omega'} \cap O_\bi)\ne \emptyset$, setting
\begin{equation*}
\begin{split}
& {\varphi}_{\omega'\bi}^{m, n}=\tau_{m}\cdot 
\bigl ( [\mathbf{1}_{O_{\bi}}  \cdot 
( \theta_{\omega'} \varphi)  ] \circ \kappa_{\omega'}^{-1} \circ R_{n}^{-1}\bigr )\\ 
&F=R_{n} \circ \kappa_{\omega'} \circ \widetilde T_{\bi}^{-n} \circ \kappa_{\omega}^{-1} \circ R_{n}^{-1}\, ,\, \,\,
f=\tilde \tau_m \cdot \bigl (\bigl (\theta_\omega [\tilde\theta_{\omega'} \theta_{\bi}\tilde g^{(n)}_\bi]\circ \widetilde T_{\bi}^{-n}\bigr )\circ \kappa_\omega^{-1} \circ R_n^{-1}\bigr ) \, ,
\end{split}
\end{equation*}
for  $\CC^\infty$ functions $\tilde\theta_{\omega'}:\widetilde M \to [0,1]$ ,
$\tilde \tau_m:\real^d \to [0,1]$ with 
$$\theta_{\omega'}(x) =\tilde\theta_{\omega'}(x)\theta_{\omega'}(x)\, , \,\,
\forall x\in \widetilde M \, , \quad 
\tilde\tau_m(u)\tau_m(u)= \tau_m(u)\, ,\,\,\forall u \in \real^d\, ,
$$
we have
\begin{equation}\label{chartsdyn}
(\theta_{\omega} [({\theta_\bi\tilde g_{\bi}^{(n)}}  \mathbf{1}_{O_{\bi}}\varphi_{\omega'}^{m, n}) \circ \widetilde T_{\bi}^{-n} ])\circ \kappa_{\omega}^{-1} \circ R_{n}^{-1}=
f \cdot ( {\varphi}_{\omega'\bi}^{m, n} \circ F)\, .
\end{equation}
Increasing $r_n$ if needed (and choosing $\tilde \tau_m$ with a small enough
support), the map $F=F(n,\bi,\omega,\omega')$ satisfies  the condition \eqref{control} of Lemma~\ref{lem:LY-local}, if we take for
$U$ the intersection of $R_n(\kappa_\omega(\tT^n_\bi(\widetilde O_{\bi}\cap V_{\omega'})))$
with the interior of the support of $\tilde \tau_m$. Then, Lemma~\ref{lem:LY-local} 
applied to the map $F$, the weight $f=f(n,m,\bi,\omega,\omega')$,
 which is $\CC^\alpha$ and supported on $U$, 
 and the test function ${\varphi}_{\omega'\bi}^{m, n}$ gives
 constants $C_n$ (depending on  $r_n$) and $C_t$ such that,
  recalling $\tilde \nu^{t}_{n,\bi}$ from \eqref{eq:hyp-indext}, and, setting
 $\Theta_\bi=\supp \theta_\bi$, 
\begin{align}
\nonumber
\|f\cdot ({\varphi}_{\omega'\bi}^{m, n}& \circ F)\|_{H_{p}^{t}}\\
\label{eq:after-locLY}&\leq C_{t} 
\sup _{\Theta_\bi }|{\tilde g_{\bi}^{(n)}}| 
\sup _{\Theta_\bi}|\det D \tT^{n}_\bi|^{\frac{1}{p}} 
\sup _{\Theta_\bi}\tilde \nu^{t}_{n,\bi}\|{\varphi}_{\omega'\bi}^{m, n}\|_{H_{p}^{t}}
+C_{n}\|{\varphi}_{\omega'\bi}^{m, n}\|_{H_{p}^{t'}}\, .
\end{align}
Next, using bounded distortion for uniformly expanding maps (see e.g. \cite[III.1]{Man}),  there exists $C>0$ such that for all $n$ and $\bi$,
\begin{equation*}
\begin{split}
\sup _{\Theta_\bi}(|\tilde {g}_{\bi}^{(n)}|) \sup _{\Theta_\bi}(|\det D \tT^{n}_\bi|^{1 / p})&
\leq C \inf _{\Theta_\bi}(|\tilde {g}_{\bi}^{(n)} \| \det D \tT^{n}_\bi|^{1 / p})\, .
\end{split}
\end{equation*}
Finally,   using $(\inf a) \cdot(\sup b) \leq \sup (a \cdot b)$,  we have
\begin{equation}\label{sup-O}
  \sup _{\Theta_\bi}|{\tilde g_{\bi}^{(n)}}| \sup _{\Theta_\bi}|\det  D \tT^{n}|^{\frac{1}{p}} \sup _{\Theta_\bi} \tilde \nu^{t}_{n,\bi}\le C \sup _{\Theta_\bi}(|{ \tilde g^{(n)}_\bi}||\det D\tT^{n}_\bi|^{\frac{1}{p}}\tilde \nu^{t}_{n, \bi})\, .
\end{equation}
Setting $\Sigma_\bi=\{ (\omega',m) \mid \bi \in J^n(\omega',m)\}$
and
\begin{equation}\label{misfact}
{\Xi}_{n,\bi}=\sup _{\Theta_\bi} [| \tilde g^{(n)}_\bi||\det D \tT^{n}_\bi|^{\frac{1}{p}} \tilde \nu^{t}_{n, \bi}]\, ,
\end{equation}
by \eqref{eq:after-locLY}, \eqref{chartsdyn}, \eqref{sup-O}, and the Minkowski inequality, we have
\begin{align}
\nonumber
\sum_{\bi \in I^n}&
\bigg(\sum_{(\omega',m)\in \Sigma_\bi}\|({\theta_{\bi}\tilde g_{\bi}^{(n)}}) \circ \widetilde T_{\bi}^{-n} \circ \kappa_{\omega}^{-1} \circ R_{n}^{-1} \cdot {\varphi}_{\omega'\bi}^{m, n} \circ F\|^p_{H_{p}^{t}}\bigg)^{\frac 1p}\\
\label{before-holder}&\le C_{t,p}\sum_{\bi}{\Xi}_{n,\bi}\bigg(\sum_{(\omega',m)\in \Sigma_\bi}\|{\varphi}_{\omega'\bi}^{m, n}\|^p_{H^t_p}\bigg)^{\frac 1p}+C_{n}\sum_{\bi}\bigg(\sum_{(\omega',m)\in \Sigma_\bi}\|{\varphi}_{\omega'\bi}^{m, n}\|^p_{H_{p}^{t'}}\bigg)^{\frac 1p }\, .
\end{align}
Let us estimate the first term above. For all $q\ge 1$, the\footnote{We use the standard notation
$(\sum_i |a_i|^{q'})^{1/q'}=\sup_i |a_i|$ if $q'=\infty$.} H\"older inequality gives
\begin{align}
\nonumber \sum_{\bi\in I^n}{\Xi}_{n,\bi}
\bigl(\sum_{(\omega',m)\in \Sigma_\bi} &\|{\varphi}_{\omega'\bi}^{m, n}\|^p_{H^t_p}\bigr)^{\frac 1p}\\
\label{Holder}&\le \bigl (\sum_{\bi}{\Xi}_{n,\bi}^{q}\bigr)^{\frac 1q}
\bigl(\sum_{\bi}\big(\sum_{(\omega',m)\in \Sigma_\bi}\|{\varphi}_{\omega'\bi}^{m, n}\|^p_{H^t_p}\big)^{\frac{q}{(q-1)p}}
\bigr)^{\frac{q-1}{q}} \, .
\end{align}
To estimate $\left\|\varphi_{\omega'\bi}^{m, n}\right\|_{H_{p}^{t}}$, we argue as in \cite{Tho} to get rid of the characteristic functions $\mathbf{1}_{O_\bi}$. On the one hand, if the support  of $\varphi_\omega^{n,m}$ is small enough, which is guaranteed if $r_n$ is large enough,
then
Lemma~\ref{lem:boundary-comp} provides a neighbourhood $O'$ of this support and a matrix $A$
such that the intersection of $R_n(A(\kappa_{\omega'} (O' \cap O_{\bi})))$ with almost any line parallel to a coordinate axis has at most $L_0 n$ connected components. Hence,
since\footnote{This is the only place where this assumption is used.} $t<1/p$, Lemma~\ref{lem:charact} applied to multiplication by $\mathbf{1}_{O'\cap O_\bi}\circ \kappa_{\omega'}^{-1}\circ A^{-1}\circ R_n^{-1}$, using that  $A$ is orthogonal and commutes with $R_n$, implies,
for all $\bi\in J^n$,
\begin{equation}
\| (\mathbf{1}_{O_{\bi}} \circ \kappa_{\omega'}^{-1} \circ R_{n}^{-1})\cdot v
 \|_{H_{p}^{t}} \leq C_{t,p} L_0 n\|
v\|_{H_{p}^{t}}\, , \forall v \, .
\end{equation}
 Thus, we obtain
\begin{equation}\label{sigma}
\|\varphi_{\omega'\bi}^{m, n}\|_{H_{p}^{t}}
 \leq C_{t,p} L_0 n\|\tau_m \cdot (\theta_{\omega'} \varphi) \circ \kappa_{\omega'}^{-1} \circ R_{n}^{-1}\|_{H_{p}^{t}}\, ,\, \forall \bi \in J^n\, .
\end{equation}
We can now estimate  the second factor  in  the right hand-side of \eqref{Holder}. Using \eqref{sigma} 
and \eqref{eq:localization} from the localisation Lemma~\ref{lem:loc-prin},
we have,
for any $q\in[ 1, \frac p {p-1}]$,
\begin{align}
\label{rhs} &\bigg(\sum_{\bi\in I^n}\bigg(\sum_{(\omega',m)\in \Sigma_\bi}\|{\varphi}_{\omega'\bi}^{m, n}\|^p_{H^t_p}\bigg)^
{\frac{q}{(q-1)p}}\bigg)^{\frac{q-1}{q}}\\
\nonumber &\,\,\,\,\le C_{t,p} n \bigg(\sum_{\bi}\big(\sum_{(\omega',m)\in \Sigma_\bi}
\|\tau_m\cdot (\theta_{\omega'} \varphi) \circ \kappa_{\omega'}^{-1} \circ R_{n}^{-1}\|^p_{H_{p}^{t}}\big)^{\frac{q}{(q-1)p}}\bigg)^{\frac{q-1}{q}}\\
\nonumber &\,\,\,\,\le C_{t,p} n \bigg(\big(\sum_{\omega',m}  D^b_n\cdot \sup_{\bi \in J^n(\omega', m)}
\|\tau_m\cdot (\theta_{\omega'} \varphi) \circ \kappa_{\omega'}^{-1} \circ R_{n}^{-1}\|^p_{H_{p}^{t}}\big)^{\frac{q}{(q-1)p}}\bigg)^{\frac{q-1}{q}}\\
\label{last-complexity}&\,\,\,\,\le   C_{t,p} n (D^b_n)^{1/p} \big(\sum_{\omega'}\| (\theta_{\omega'} \varphi) \circ \kappa_{\omega'}^{-1} \circ R_{n}^{-1}\|^p_{H_{p}^{t}}\big)^{\frac 1p}\, ,
\end{align}
exchanging the sums over $\bi$ and $(\omega',m)$ (which is legitimate
since $\frac q {(q-1)p}\ge 1$) and
using $\# J^n (\omega', m)\le D^b_n$ for
all $m$ and $\omega'$ in the penultimate line.
Combining  \eqref{last-complexity}, \eqref{Holder}, and \eqref{before-holder} with \eqref{eq:decomp-transfer2}, we obtain, for any $q\in[ 1, \frac p {p-1}]$, 
\begin{align}
\nonumber \|\left(\theta_{\omega} \mathcal{L}_{g}^{n} \varphi\right) \circ &\kappa_{\omega}^{-1} \circ R_{n}^{-1}\|_{H^t_p}
\le  C_n   \sum_{\omega'}\|(\theta_{\omega'} \varphi) \circ \kappa_{\omega'}^{-1} \circ R_{n}^{-1}\|_{H_{p}^{t'}}\\
\nonumber &+
 C_{t,p} n (D^b_n)^{1/p}\big (\sum_{\bi\in I^n}{\Xi}_{n,\bi}^{q}\big)^{\frac 1q} \big(\sum_{\omega'}\|(\theta_{\omega'} \varphi) \circ \kappa_{\omega'}^{-1} \circ R_{n}^{-1}\|^p_{H_{p}^{t}}\big)^{\frac 1p}
\, .
\end{align}
Hence, the reconstitution bound \eqref{rec} in Lemma~\ref{lem:frag-rec} applied to the
partition of unity $\theta_{\omega'}$ gives
\begin{align}
\nonumber \|\left(\theta_{\omega} \mathcal{L}_{g}^{n} \varphi\right) \circ \kappa_{\omega}^{-1}& \circ R_{n}^{-1}\|_{H^t_p}
\le C_{n}\sup_{\omega'\in \Omega}\|(\theta_{\omega'} \varphi) \circ \kappa_{\omega'}^{-1} \circ R_{n}^{-1}\|_{H_{p}^{t'}}\\
\label{changeq}&+C_{t,p}n (D^b_n)^{1/p} \big (\sum_{\bi\in I^n}{\Xi}_{n,\bi}^{q}\big)^{\frac 1q} \sum_{\omega'\in \Omega}\|(\theta_{\omega'} \varphi) \circ \kappa_{\omega'}^{-1} \circ R_{n}^{-1}\|_{H_{p}^{t}}\, .
\end{align}
Since $\Omega$ is finite and the map $q\mapsto \|v\|_{\ell^q}$
is strictly decreasing on $[1,\infty)$ for any fixed $v\in \real^D$, this implies \eqref{eq:main-LY} by
\eqref{misfact} and
the definition \eqref{eq:zoom-norm} of $\|\cdot\|_{r_n,t,p}$.
(Replacing  $\Theta_\bi$ by $O_\bi$, $\tilde g_\bi^{(n)}$ by $g^{(n)}$,
$\det D \tT_\bi^n$ by $\det DT^n$,  and $\tilde \nu_{n,\bi}$
by $\nu_n$ in \eqref{misfact} costs a factor
$(1+\epsilon)^n$, for arbitrarily small  $\epsilon>0$, up to taking $\theta_\bi$ with small
enough support.)
\end{proof}

\subsection{Proof of Theorem~\ref{thm:ess-bound} and  Corollary~\ref{lecor}}\label{sec:proof-thm}

\begin{proof}[Proof of Theorem~\ref{thm:ess-bound}]
The claim on the spectral radius on $L_p=\HH^0_p$ is an obvious
consequence on the $L_p$ bound in  Proposition~\ref{lem:main-LY}.

For the bound on the essential spectral radius, since each norm $\|\cdot\|_{r_{n_0},t,p}$ is equivalent to $\|\cdot\|_{\HH^t_p}$, and since the inclusion of $\HH^t_p$ in $L_p(M)$ is compact for $t>0$, the Lasota--Yorke bound in Proposition~\ref{lem:main-LY} implies, by a result of Hennion \cite[Cor.~1]{Hen}, that
 \begin{align}
 \label{eq:ess-spec}
r_{\text{ess}}(\LL_g|_{\HH^t_p})
&\le \rho=\E^{\frac {D^b(T)}p }\lim_{n\to \infty} \bigg(\sum_{\bi\in I^n}\sup _{O_\bi}\big(|g^{(n)}|
|\det D T^{n}|^{\frac 1p}\nu_n^t \big) ^{\frac p {p-1}}\bigg)^{\frac{p-1}{ p n}}\, .
\end{align}
(Indeed,  fix $n_0\ge 1$ very large, so that the limit in \eqref{eq:ess-spec}
is almost attained, decompose $n=\ell n_0+ n'$, with $\ell \ge 0$ and $0\le n'<n_0$,
and apply Proposition~\ref{lem:main-LY} inductively to bound  $\LL^{n}_g$ 
for the norm $\|\cdot \|_{r_{n_0},t,p}$, which is  equivalent to the norm of $\HH^t_p(M)$.)
Then  \eqref{eq:ess-spec}
implies the bound \eqref{eq:ess-bound1} for the essential spectral
radius, by definition of $P^*_{\optop}$.
\smallskip

To show \eqref{eq:ess-bound1t'}, we modify the proof 
of Proposition~\ref{lem:main-LY} as follows:
 In the definition \eqref{misfact} of $\Xi_{n,\bi}$, we
 replace $t$ by $t-s$. In \eqref{before-holder}, \eqref{Holder},
 and \eqref{rhs}, we replace $\varphi_{\omega',\bi}^{m,n}$ by
 $\sup_{\Theta_\bi}\tilde \nu_{n,\bi}^{s} \varphi_{\omega',\bi}^{m,n}$.
 In the line after \eqref{rhs}, we insert $(\sup_{\Theta_\bi}\tilde \nu_{n,\bi}^{s})^p$
 before the factor $\| \tau_m \cdot ...\|^p_{H^t_p}$.
 In \eqref{last-complexity} and the line above it, we replace $D_n^b$
 by $(1+\epsilon)^nD_n^b(\{\nu_n^{s}\})$ (with $\epsilon$
as in the end of the proof 
of Proposition~\ref{lem:main-LY}).
 \end{proof}

\begin{proof}[Proof of Corollary~\ref{lecor}]
Since \eqref{changeq} holds for any $q\in [1, \frac p {p-1}]$, the
conclusion of  Proposition~\ref{lem:main-LY}
holds replacing $\bigl( \sum_{\bi\in  I^n}  \sup _{O_\bi} f^{\frac p {p-1}}_{n,t,p}\bigr)^{\frac{p-1}{p}}$
by the infimum  of
$$\bigl( \sum_{\bi\in  I^n}  \sup _{O_\bi} f^{q}_{n,t,p}\bigr)^{\frac 1 q}$$
over such $q$. Thus \eqref{eq:ess-bound1}
in Theorem~\ref{thm:ess-bound}
holds replacing $\frac {p-1} {p}  P^*_{\optop}(\{\frac p {p-1} \log f_{n,t,p}\})$ by
the infimum over such $q$ of  $\frac {1} {q}  P^*_{\optop}(\{q\log f_{n,t,p}\})$.
The bound \eqref{eq:ess-bound2} follows from this version of \eqref{eq:ess-bound1}
 combined with  Theorem~\ref{thm:var-princ} with 
$G
=g|\det DT|^{\frac 1p}$:  We have, for  $q\in[ 1, \frac p {p-1}]$,
\[
\begin{split}
\log r_{\text{ess}}(\LL_g|_{\HH^t_p})&-\frac {D^b(T)}p\le \frac{1}{q}
 P^*_{\optop} (q\log (|G^{(n)}| \nu_n^{t}))\\
&=\frac 1q\sup_{\mu\in \Erg(T)}
\left\{ h_{\mu}(T)+\int q\log |G|\D \mu-tq\chi_{\mu}(D T)\right\}\\
&=\sup_{\mu\in \Erg(T)}\left\{ \frac 1qh_\mu(T)+\int \log |g(\det DT)^{\frac 1p}| \D \mu-t\chi_\mu(DT)\right\}\, .
\end{split}
\]
The final claim is shown just like the second bound of Corollary~\ref{cor1}.
\end{proof}

\section{Proof of Theorem~\ref{thm:var-princ} on the Subadditive Variational Principle}\label{sec:pressure}
We show  Theorem~\ref{thm:var-princ} in \S\ref{both}, adapting the proof\footnote{Applying directly \cite[Thm~1.3]{Buz}
and using Lemma~\ref{copyBT} would also give Theorem~\ref{thm:var-princ}, along the lines 
of \cite[Lemma~B.6]{Babook2}. (Note that
small boundary pressure of $T^m$ for $\log f_{m,t}$ for all large enough $m$
is equivalent to the condition in Theorem~\ref{thm:var-princ}
by Lemma~\ref{copyBT}.)}  of \cite[Thm~3.1(i)]{Buz} to subadditive potentials. 
For this, we first state and prove a key proposition about measures with $\mu(\SS_\OO)>0$
in \S\ref{>0} 
and next recall in \S\ref{symbd} the symbolic dynamics of a piecewise expanding
map and a variational principle of Cao--Feng--Huang.

\subsection{Measures Giving Nonzero Mass to $\SS_\OO$}\label{>0}
The proof of  Theorem~\ref{thm:var-princ} is based on the following proposition,  inspired  from \cite[Prop.~3.1]{Buz}:

\begin{prop}\label{prop:key}
Let $T$ be a piecewise $\CC^{\bar\alpha}$ expanding map. Recall
$\SS_\OO$  
from \eqref{singset}.
For each $\mu\in \Erg(T)$ such that $\mu(\SS_\OO)>0$, every $t\ge 0$,
and each piecewise $\CC^\alpha$ function $G:M\to \real^+_*$, we have
\[
h_{\mu}(T)+\int  \log G \D \mu-t\chi_{\mu}(D T)\le   P^*_{\optop}(\{\log(G^{(n)}\nu_n^t)\},\SS_\OO)\, .
\]
\end{prop}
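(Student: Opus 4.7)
The strategy adapts the proof of \cite[Prop.~3.1]{Buz} to the subadditive potential $\phi_n = \log(G^{(n)} \nu_n^t)$.

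First, I show $\mu(\SS_\OO) = 1$. Indeed $T^{-1}(\SS_\OO) \subset \SS_\OO$, so $T$-invariance of $\mu$ gives $\mu(\SS_\OO \triangle T^{-1}\SS_\OO) = 0$; ergodicity combined with the hypothesis $\mu(\SS_\OO) > 0$ then forces $\mu(\SS_\OO) = 1$. Applying $T$-invariance and countable subadditivity to $\SS_\OO = \bigcup_{k\ge 0}T^{-k}(\partial\OO)$ yields $\mu(\partial\OO) > 0$. By Kingman's subadditive ergodic theorem for $\nu_n$ and Birkhoff's theorem for $\log G$,
\[
\lim_{n\to\infty}\tfrac{1}{n}\int \phi_n\, d\mu = \int \log G\, d\mu - t\chi_\mu(DT).
\]

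Second, consider the finite Borel partition $\mathcal{P} = \OO \cup \{\partial\OO\}$. Since $\diam(\OO^{(n)}) \le \lambda^{-n}\diam(M) \to 0$, $\mathcal{P}$ is a generator under $T$, so $h_\mu(T) = \lim_n \tfrac{1}{n}H_\mu(\mathcal{P}^{(n)})$. Each atom of $\mathcal{P}^{(n)}$ is either an open cylinder $O_\bi$ with $\bi\in I^n$ or is contained in $E_n = \bigcup_{k=0}^{n-1}T^{-k}(\partial\OO)\subset\SS_\OO$. Since $\phi_n$ is bounded and extends continuously from each $O_\bi$ to $\overline{O_\bi}$, a standard log-sum (Jensen) inequality gives
\[
H_\mu(\mathcal{P}^{(n)}) + \int \phi_n \, d\mu \le \log \sum_{C:\,\mu(C)>0} \sup_C e^{\phi_n}.
\]
By $\mu(\SS_\OO) = 1$, every charged atom meets $\SS_\OO$: charged cylinders $O_\bi$ automatically satisfy $\overline{O_\bi} \cap \SS_\OO \ne \emptyset$, and each charged boundary atom $C \subset E_n$ lies in $\overline{O_\bi}$ for some such $\bi$, with $\sup_C e^{\phi_n} \le \sup_{O_\bi} e^{\phi_n}$ by continuity.

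The main obstacle is then to establish
\[
\log \sum_{C:\,\mu(C)>0}\sup_C e^{\phi_n} \le \log \sum_{\bi:\,\overline{O_\bi}\cap\SS_\OO \ne \emptyset} \sup_{O_\bi} e^{\phi_n} + o(n),
\]
after which dividing by $n$ and letting $n\to\infty$ yields $h_\mu(T) + \int \log G\, d\mu - t\chi_\mu(DT) \le P^*_{\optop}(\{\phi_n\},\SS_\OO)$. A naive count of how many boundary atoms sit inside a given $\overline{O_\bi}$ grows like $2^n$, which would add an unwanted $\log 2$ to the exponent. Following the outline of the paper, this loss is avoided by passing to a symbolic model $\Sigma \subset (I\cup\{\ast\})^{\bN}$ coding orbits together with a consistent labelling across singular times, lifting $\mu$ to a shift-invariant $\hat\mu$ on the compact $\Sigma$, and applying the Cao--Feng--Huang \cite{CFH} variational principle to the continuous subadditive lift $\hat\phi_n = \phi_n \circ \pi$. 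The Birkhoff frequency bound $\#\{k<n : T^k x\in\partial\OO\}\sim n\mu(\partial\OO)$ for $\mu$-typical $x$, together with careful control of the factor map $\pi:\Sigma\to M$, identifies $P_{\optop}(\{\hat\phi_n\},\Sigma)$ with $P^*_{\optop}(\{\phi_n\},\SS_\OO)$ and closes the inequality.
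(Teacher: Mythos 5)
Your reduction steps are fine ($\mu(\SS_\OO)=1$, hence $\mu(\partial\OO)>0$; Kingman/Birkhoff for $\lim\frac1n\int\phi_n\,\D\mu$; the log-sum bound $H_\mu(\mathcal{P}^{(n)})+\int\phi_n\,\D\mu\le\log\sum_C\sup_C e^{\phi_n}$), but the proof is not complete: the step you yourself label ``the main obstacle'' --- controlling the multiplicity of charged atoms of $\mathcal{P}^{(n)}$ that involve the boundary symbol, so as to replace the sum over atoms by the sum over cylinders $\bar O_\bi$ meeting $\SS_\OO$ without an extra exponential factor --- is exactly the content of the proposition, and it is not proved. The closing paragraph only gestures at a fix (a symbolic model over $I\cup\{\ast\}$, a lift $\hat\mu$, and the Cao--Feng--Huang variational principle for $\hat\phi_n=\phi_n\circ\pi$), and that gesture does not work as described: CFH applies to a continuous subadditive potential on the \emph{whole} compact shift space and yields the unrestricted pressure as a supremum over \emph{all} invariant measures, so the quantity it controls dominates $P^*_{\optop}(\{\log(G^{(n)}\nu_n^t)\})$ and cannot be identified with the pressure \emph{relative to} $\SS_\OO$, which is precisely what the statement requires; moreover $\phi_n$ does not extend continuously to boundary-coded sequences, and your claim that every charged boundary atom lies in a single $\overline{O_\bi}$ is false in general (such an atom can straddle many closed cylinders). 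Note also that the justification ``$\diam(\OO^{(n)})\to0$ hence $\mathcal{P}$ is a generator'' is insufficient when $\mu(\partial\OO)>0$, since atoms of $\bigvee_kT^{-k}\mathcal{P}$ whose itinerary hits the boundary symbol need not shrink to points.

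For comparison, the paper avoids the atom-multiplicity problem altogether: it never uses the partition enlarged by $\partial\OO$. Instead it (i) uses Lemma~\ref{copyBT} to bound, for a suitable block length $m_1$, the sum over $nm_1$-cylinders meeting $\SS_\OO$ by $\exp\bigl(nm_1(P^*_{\optop}(\{\log(G^{(n)}\nu_n^t)\},\SS_\OO)+2\epsilon)\bigr)$; (ii) uses Birkhoff/Oseledec plus H\"older distortion on cylinders to get a uniform pointwise lower bound $e^{nm_1(\int\log G\,\D\mu-t\chi_\mu(DT)-2\epsilon)}$ for $\sup_{O_\bi}(G^{(nm_1)}(\nu_{m_1}^t)^{(n)})$ over the family $K_n$ of $nm_1$-cylinders meeting $R\cap\SS_\OO$, which yields an upper bound on $\#K_n$; and (iii) converts $\#K_n$ into entropy via Rudolph's formula, $h_\mu(T)=\liminf_\ell\frac1\ell\log K'_\ell$ with $K'_\ell$ the minimal number of $\ell$-cylinders covering a fixed fraction of the measure. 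Since only genuine cylinders (automatically counted in the pressure on $\SS_\OO$) appear, no bookkeeping of boundary atoms and no symbolic/CFH argument is needed in this proposition (CFH enters only in the proof of Theorem~\ref{thm:var-princ}). To repair your argument you would need either to carry out this cylinder-counting scheme or to genuinely establish the multiplicity bound you postulate; as written, the key inequality is assumed rather than proved.
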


\noindent Our proof  of the proposition uses the following  lemma
  (see \cite{BaTsu1}, \cite[Lemma~B.3]{Babook2}):
 \begin{lemma}\label{copyBT}
 Let $T$ be a piecewise $\CC^{\bar\alpha}$ expanding map, and let $G:M\to \real$ be piecewise
 continuous. Then, for any measurable set $E\subset M$ and any $t\ge 0$,
 $$
 P^*_{\optop}(T,\{ \log (|G^{(n)}| \nu_n^t) \mid n\ge 1\}, E)=
 \lim_{m \to \infty} \frac 1 m P^*_{\optop}(T^m, \log (|G^{(m)}| \nu_m^t)  , E)\, .
 $$
 \end{lemma}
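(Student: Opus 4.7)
The plan is to establish both inequalities of the claimed equality, using that the $k$-cylinders of $T^m$ (relative to the partition $\OO^{(m)}$ of $M$) are exactly the $km$-cylinders of $T$.

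\textbf{The easy direction ($\leq$).} The sequence $f_n = |G^{(n)}|\nu_n^t$ is submultiplicative for $T$: $|G^{(n)}|$ is multiplicative (a.e.) since $G$ is a piecewise continuous function and $\nu_n$ is submultiplicative under composition. Hence
\[
f_{km}(x) = |G^{(km)}(x)|\nu_{km}(x)^t \le \prod_{j=0}^{k-1} f_m(T^{jm}(x)) = (f_m)^{(k)}_{T^m}(x)
\]
pointwise. Summing $\sup_{O_\bi}$ over $\bi \in I^{km}$ with $E \cap \bar O_\bi \neq \emptyset$, taking $(1/(km))\log$, and letting $k \to \infty$ yields, for every $m\ge 1$,
\[
P^*_{\optop}(T,\{\log f_n\},E) \le \tfrac{1}{m}P^*_{\optop}(T^m,\log f_m,E),
\]
so the left-hand side bounds $\liminf_m \tfrac{1}{m} P^*_{\optop}(T^m,\log f_m,E)$.

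\textbf{The converse direction ($\ge$).} The first step is to show that the sequence $m \mapsto \tfrac{1}{m}P^*_{\optop}(T^m,\log f_m,E)$ is monotone decreasing along multiples. Applying the same submultiplicativity with steps of size $nm$ yields $(f_{nm})^{(k)}_{T^{nm}} \le (f_m)^{(nk)}_{T^m}$, which on the level of pressures gives $P^*_{\optop}(T^{nm},\log f_{nm},E) \le n\, P^*_{\optop}(T^m,\log f_m,E)$, hence the claimed monotonicity. Combined with the easy direction, this yields a convergent sequence along cofinal multiples. The second step is to bound the $T^m$-pressure above by the partition sum of $T$. For $\bi = (\bi_0,\dots,\bi_{k-1})$ with $\bi_j \in I^m$, submultiplicativity again gives $\sup_{O_\bi}(f_m)^{(k)}_{T^m} \le \prod_j \sup_{O_{\bi_j}} f_m$. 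The admissibility $E \cap \bar O_\bi \neq \emptyset$ implies $T^{jm}(E) \cap \bar O_{\bi_j} \neq \emptyset$ for each $j$. When $E$ is $T$-forward invariant (or is replaced by its forward orbit without changing the pressure), each factor is restricted to $I^m(E)$, producing
\[
P^*_{\optop}(T^m,\log f_m, E) \le \log \sum_{\bi \in I^m(E)} \sup_{O_\bi} f_m,
\]
and dividing by $m$ and letting $m\to\infty$ (the right-hand side converges to $P^*_{\optop}(T,\{\log f_n\},E)$ by definition, noting that $n \mapsto \log \sum_{I^n(E)}\sup f_n$ is subadditive because of $f_n$'s submultiplicativity) gives the desired upper bound.

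\textbf{Main obstacle.} The subtle point is the $E$-restriction in the converse direction. The naive bound $b_k(m,E) \le (\sum_{I^m(E)}\sup f_m)(\sum_{I^m}\sup f_m)^{k-1}$ (dropping the $E$-constraint after the first step) only gives the \emph{full} pressure $P^*_{\optop}(T,\{\log f_n\},M)$ in the limit, which is too weak when $E$ is small. Overcoming this requires tracking $T^{jm}(E)$ through each factor; this succeeds cleanly when $E$ is forward invariant. For the main application $E = \SS_\OO$, one verifies that $\SS_\OO \cup T(\partial\OO) \cup T^2(\partial\OO) \cup \cdots$ is forward invariant, and that adding these piecewise-smooth boundary orbits does not increase the subadditive pressure (their contribution being of strictly smaller effective dimension). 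Alternatively, one can invoke a subadditive variational principle (as in Cao--Feng--Huang) to represent both limits as the same supremum over invariant measures supported in/near $E$, thereby bypassing the combinatorial accounting altogether.
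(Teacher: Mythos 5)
Your ``$\le$'' direction is correct and coincides with the paper's argument: the depth-$k$ cylinders of $T^m$ are exactly the $km$-cylinders of $T$, the admissibility condition $E\cap\bar O_{\bi}\ne\emptyset$ is the same on both sides, and submultiplicativity gives $f_{km}\le (f_m)^{(k)}_{T^m}$. The genuine gap is in the ``$\ge$'' direction. What you need there is precisely the bound $P^*_{\optop}(T^m,\log f_m,E)\le\log\sum_{\bi\in I^m,\,E\cap\bar O_{\bi}\ne\emptyset}\sup_{O_{\bi}}f_m$, and you establish it only when $E$ is forward invariant --- a hypothesis the lemma does not make and which fails for the sets it is actually applied to ($\partial\OO$ and $\SS_\OO=\bigcup_{k\ge0}T^{-k}(\partial\OO)$ are backward, not forward, invariant). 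Neither of your patches closes this. Replacing $E$ by $\bigcup_{j\ge0}T^j(E)$ can only increase the relative pressure, and the claim that adjoining the forward images $T^j(\partial\OO)$ ``does not increase the subadditive pressure'' because of ``smaller effective dimension'' is unsupported: the pressure carried by codimension-one sets is exactly what the small boundary pressure condition is about, and it need not be negligible (cf.\ \eqref{Buzbd}, \eqref{Buzbd2}). Invoking Cao--Feng--Huang is not a shortcut either: that variational principle concerns the full pressure of a continuous map on a compact metric space, whereas here one needs the relative pressure $P^*_{\optop}(\cdot,E)$ of a discontinuous piecewise map on a measurable subset; in the paper, \cite{CFH} enters only after passing to the shift $\Sigma(T)$, and only for the full pressure.

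For comparison, the paper's proof of the ``$\ge$'' direction is a few lines and never tracks images of $E$: since the left-hand side is a limit, for every $\epsilon>0$ there is an $m$ with $\frac1m\log\sum_{\bi\in I^m:\,E\cap\bar O_{\bi}\ne\emptyset}\sup_{O_{\bi}}f_m\le P^*_{\optop}(T,\{\log f_n\},E)+\epsilon$, and one then uses that the limit in $k$ defining $P^*_{\optop}(T^m,\log f_m,E)$ is an infimum, so that it is bounded by its $k=1$ term, which is exactly this partition sum. Note that this infimum property is the very inequality you were trying to prove by hand, and its verification (subadditivity in $k$ of the restricted partition sums) again requires that admissibility of a long cylinder pass to its later blocks --- the point you correctly isolated; for $E=\SS_\OO$ this can be argued by noting that an interior point of a cylinder which lies in $\SS_\OO$ can first hit $\partial\OO$ only after time at least the depth of the cylinder, so its image under the initial block again lies in $\SS_\OO$ --- no forward invariance is needed. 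So you located the right crux, but as written your proposal proves the lemma only for forward-invariant $E$ and does not cover the case needed in the paper.
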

 
 \begin{proof}
  The limit in the right-hand side exists in $\real \cup \{-\infty\}$
 by submultiplicativity. By definition, for each $\epsilon >0$, there exists
 $m\ge 1$ such that
 \begin{align*}
 P^*_{\optop}(T,\{ \log (|G^{(n)}| \nu_n^t) \mid n\ge 1\}, E)+\epsilon 
 &\ge\frac 1 m \log  \sum_{\bi \in I^m\mid E\cap O_\bi \ne \emptyset} \sup_{O_\bi} (|G^{(m)}| \nu_m^t)
 \\
 &\ge \frac 1 m P^*_{\optop}(T^m, \log (|G^{(m)}| \nu_m^t) , E)\, .
 \end{align*}
 (we used that the limit defining $P^*_{\optop}(T^m, \log f , E)$
 is an infimum).
 Thus
 $$P^*_{\optop}(T,\{ \log (|G^{(n)}| \nu_n^t) \mid n\ge 1\}, E)\ge
 \lim_{m \to \infty} \frac 1 m P^*_{\optop}(T^m, \log (|G^{(m)}| \nu_m^t)  , E)\, .
 $$
 The other inequality follows from submultiplicativity, which gives, for any  $m>0$, 
\begin{align*}
&P^*_{\optop}(T,\{ \log (|G^{(n)}| \nu_n^t) \mid n\ge 1\}, E)= 
\lim_{k\to \infty}
\frac{1}{mk}
 \log  \sum_{\bi \in I^{mk}\mid E\cap O_\bi \ne \emptyset} \sup_{O_\bi} (|G^{(mk)}| \nu_{mk}^t)
 \\
&\qquad\qquad\qquad\le 
\frac{1}{m} \lim_{k\to \infty} \frac 1 k
\log  \sum_{\bi \in (I^{m})^k\mid E\cap O_\bi \ne \emptyset} \sup_{O_\bi} (|G^{(m)}|^{(k)} \nu_{m}^{tk})
 \\
&\qquad\qquad\qquad=
\frac 1 m P^*_{\optop}(T^m, \log (|G^{(m)}| \nu_m^t)  , E)
 \, .
\end{align*}
 \end{proof}
 
\begin{proof}[Proof of Proposition~\ref{prop:key}]
We  may assume $\inf G>0$ because, if $G_k$ is a  sequence of  piecewise continuous
functions  such that
$\inf_M G_k>0$, with  $G_{k}\ge G_{k+1}\ge |G|$ for all
$k$, and $\lim_{k\to\infty}\|G_k- |G|\|_{L_\infty(M)}=0$,
 then for  any measurable set $E$, 
applying our definitions to the sequences
$f_n=\log (G^{(n)} \nu_n^t)$ and (for a fixed $k$) $f'_n=\log (G_k^{(n)} \nu_n^t)$,
\begin{equation}\label{zeroclaim}
\lim_{k\to \infty}P^*_{\optop}(T,\{ \log (G_k^{(n)} \nu_n^t) \}, E)=
P^*_{\optop}(T,\{ \log (|G^{(n)}| \nu_n^t) \}, E)\, , \,\forall t\ge 0\, .
\end{equation}
To show \eqref{zeroclaim}, it suffices to show  that for all $t\ge 0$
(see\footnote{There are typos in the proof there: $G_n$ should be replaced by $G$ (twice) in the third line
of that proof, and $Q_*(T,G, \lambda^{(*)},\WW)+2\epsilon$ should be
$Q_*(T,G, \lambda^{(*)},\WW,m)+\epsilon$
in the 5th line.} \cite[Lemma~B.4]{Babook2})
$$
\lim_{k \to \infty} P^*_{\optop}(T,\{ \log (G_k^{(n)} \nu_n^t) \mid n\ge 1\}, E)\le
P^*_{\optop}(T,\{ \log (|G^{(n)}| \nu_n^t) \mid n\ge 1\}, E) \, .
$$ 
Fix $E$ and $t$. For any $\epsilon>0$, there exists $m=m(\epsilon)$ large enough such that 
$$
P^*_{\optop}(T,\{ \log (|G^{(n)} |\nu_n^t) \mid n\ge 1\}, E)+\epsilon
\ge \frac 1 m
\log  \sum_{\bi \in I^m\mid E\cap O_\bi \ne \emptyset} \sup_{O_\bi} (|G^{(m)}| \nu_m^t)\, .
$$
Then take $k_0(m)$ such that for all $k\ge k_0$
$$
\frac 1 m
\log  \sum_{\bi \in I^m\mid E\cap O_\bi \ne \emptyset} \sup_{O_\bi} (G_k^{(m)} \nu_m^t)
\le \frac 1 m\bigl ( \epsilon+
\log  \sum_{\bi \in I^m\mid E\cap O_\bi \ne \emptyset} \sup_{O_\bi} (|G^{(m)}| \nu_m^t))
 \, .
$$ 
We conclude the proof of \eqref{zeroclaim} by  submultiplicativity: For all $k\ge k_0$,
$$P^*_{\optop}(T,\{ \log (G_k^{(n)} \nu_n^t) \mid n\ge 1\}, E)\le
P^*_{\optop}(T,\{ \log (|G^{(n)}| \nu_n^t) \mid n\ge 1\}, E)+2\epsilon  \, .
$$

So let us assume that $\inf |G|>0$. By definition,
$$
(G^{(n)} \nu_n^t)(x)=\exp\left(\sum_{k=0}^{n-1}\log G(T^k(x))
-t \log \inf_{ \|v\|=1} \|D_{ x} T ^n (v) \|\right)\, .
$$
We introduce a generalisation of \eqref{defnn}, setting, for $n, m\ge 1$,
$$
f^{(n,m)}(x)=\prod_{k=0}^{n-1} f(T^{k m}(x)) \, , \,\, x\in M\, .
$$
Fix $\epsilon>0$.
First, by Lemma~\ref{copyBT} there exist $m_0\ge 1$ and  a sequence $n_0(m)\ge 1$  such that 
(using the convention that the supremum of any function over the empty set is zero)
\begin{align}
\nonumber \sum_{\bi\in I^{n m}}\sup_{O_{\bi}}(G^{(mn)}(\nu_{m}^t)^{(n,m)})&
\le \E^{{n \left(P^*_{\optop}(T^m, \log (G^{(m)}\nu_{m}^t),\SS_\OO)+\epsilon\right)}}\, , \, \forall m \ge 1\, ,
\forall n\ge n_0(m) ,\\
\label{one-side}&
\le \E^{{nm \left(P^*_{\optop}(\{\log (G^{(k)}\nu_{k}^t\},\SS_\OO)+2\epsilon\right)}}\, , \, \forall m \ge m_0\, ,
n\ge n_0(m)  .
\end{align}

Next, for any  $\mu\in \Erg(T)$, Oseledec's theorem \cite{Via} implies
\begin{equation}
\lim_{m\to \infty}  \frac{t}{m} \log \nu_m(x) =-t \chi_{\mu}(DT)\, ,
\quad  \mbox{ for } \mu \mbox{ almost every } x\, .
\end{equation}
Thus,  by the Birkhoff ergodic theorem, there exists a
set $R\subset M$, with $\mu(R)>1- \frac {\mu(\SS_\OO)}2$,
and there exists an integer $m_0\le m_1(\epsilon)<\infty$  such that 
\begin{equation}
(G^{(m_1)} \nu_{m_1}^t)(x)\ge \E^{m_1\left(\int \log G \D\mu-t\chi_\mu(DT)-\epsilon\right)}\, ,\,
\forall x \in R
\, .
\end{equation}
Therefore
\begin{equation}\label{oseledec}
(G^{(m_1)} \nu_{m_1}^t)^{(n,m_1)} (x)\ge \E^{n m_1\left(\int \log G \D\mu-t\chi_\mu(DT)-\epsilon\right)}\, ,\,
\forall x \in R\, ,\, \forall n\ge 1
\, .
\end{equation}
For each $n\ge 1$, define 
\[
 K_n=\{\bi\in I^{nm_1}\mid O_{\bi}\cap R \cap \SS_\OO\ne \emptyset\}  \, .
\]
Since $\KK_n=\cup_{\bi \in  K_n} O_\bi$ contains $R\cap \SS_\OO$, we have $
\inf_n \mu(\KK_n)\ge \mu(R\cap \SS_\OO)>\frac {\mu(\SS_\OO)}2>0$.
Next, since $\log G$ is\footnote{We do not see why piecewise uniform
continuity of  $\phi$  suffices for \cite[(5)]{Buz}.} piecewise H\"older,
and   $\diam(\OO^{(n)})\le \diam (M)/\lambda ^n$,
there exist $C_G<\infty$ and  $n_1(\epsilon)\ge 1$ such that, for all $n\ge  n_1$
and all $\bi \in I^{nm_1}$,
\begin{equation}\label{unif-cont}
|\log G^{(nm_1)}(x)- \log G^{(nm_1)}(y)|\le C_G (\diam (\OO))^\alpha \le n \frac \epsilon 2\,  ,\, 
\, \forall x,y\in O_{\bi} \, ,
\end{equation}
and, in addition 
since $\nu_{m_1}$ is  H\"older on  $O_\bj$
for each $\bj\in I^{m_1}$,  we have for all $n\ge  n_1$,
\begin{equation}\label{unif-cont'}
t |\log (\nu_{m_1})^{(n, m_1)}(x)- \log (\nu_{m_1})^{(n,m_1)}(y)| \le n \frac \epsilon 2\,  ,\,
\, \forall x,y\in O_{\bi}\, ,\,\, \forall   \bi \in I^{nm_1}\, .
\end{equation}
It follows from (\ref{unif-cont}--\ref{unif-cont'}) and \eqref{oseledec} that, for all $n\ge n_1$,
\begin{align}
\sum_{\bi\in K_n}\sup_{x \in O_{\bi}} (G^{(nm_1)} (\nu_{m_1}^t)^{(n)})(x)&
\ge \sum_{\bi\in K_n} \E^{-n\epsilon} \sup_{x \in O_{\bi}\cap R}(G^{(m_1)}\nu_{m_1}^t)^{(n)}(x) \\
\nonumber &\ge \# K_n \cdot  \E^{nm_1\left(\int \log G \D\mu -t\chi_\mu(DT)-2\epsilon\right)}\, .
\end{align}

 Therefore, since $m_1\ge m_0$, recalling \eqref{one-side}
we have, for all $n\ge \max\{ n_0(m_0), n_1\}$,
\begin{equation}\label{cardin}
\# K_n \le C\E^{nm_1\left(P^*_{\optop}(\{\log(G^{(n)}\nu_n^t)\}, \SS_\OO)+2\epsilon\right)}
 \cdot  \E^{-nm_1\left(\int \log G \D\mu-t\chi_\mu(DT)-2\epsilon\right)}\, .
\end{equation}
Rudolph's formula for the entropy (see \cite[\S 5.1,  \S 5.10]{Rud}) says that  if $\mu\in \Erg(T)$  then, 
for any fixed $\gamma \in (0,1)$ and any finite generator, denoting by $K'_\ell$ the minimal cardinality of a collection of $\ell$-cylinders  whose union
has  measure at least $\gamma$,  we have
$ h_{\mu}(T)=\liminf _{\ell \rightarrow \infty} \frac{1}{\ell} \log K'_{\ell}$.
Therefore,  taking $\OO$ as a generator and $\gamma=\frac {\mu(\SS_\OO)} 2$, we have $\# K_n\ge K'_{nm_1}$ so that
\begin{align*}
h_{\mu}(T) &\le \liminf _{n \rightarrow \infty} \frac{1}{nm_1} \log \# K_{n}\\
&\le P^*_{\optop}(\{\log(G^{(n)}\nu_n^t)\}, \SS_\OO)+2\epsilon- \int \log G d\mu +t\chi_\mu(DT)+2\epsilon\, ,
\end{align*}
where we used \eqref{cardin} for the second inequality.  To conclude, let  $\epsilon \to 0$.
\end{proof}

\subsection{Symbolic Dynamics. Continuous Subadditive Variational Principle}\label{symbd}
We use the   symbolic dynamics  for a piecewise
$\CC^{\bar\alpha}$ expanding map $T$ from \cite[Beginning of \S3]{Buz}: Set
\[
\Sigma_0(T)=\{\bi_{\infty}=(i_0,i_1,\dots)\in I^{\integer_+}\mid  \exists x\in M \mid  T^n x\in O_{i_n} \, , \forall n\ge 0 \}\, ,
\]
and let   $\Sigma(T)$ be the closure of $\Sigma_0(T)$ in $I^{\integer_+}$
for  the product topology of the discrete topology on $I$. (This topology is compatible with the distance  
$\operatorname{dist}(\bi_{\infty}, \bj_{\infty})=2^{-n}$, where $n(\bi_{\infty}, \bj_{\infty})=\min\{k\in \integer_+\mid \bi^{k}_\infty\neq \bj^k_{\infty}\}$, 
where $\bi^k_{\infty}=(i_0,i_1,\dots,i_{k-1})\in I^k$,
and with the convention $\min \emptyset =\infty$.)  Let $\sigma:\Sigma(T)\to \Sigma(T)$ be the left-shift on $\Sigma(T)$.

By compactness of $M$, and since $T$ is piecewise expanding, for each $\bi_\infty\in \Sigma(T)$ there exists a unique $x\in M$ such that $\cap_{n\ge 1}\overline{O}_{\bi^n_\infty}=\{x\}$. This defines a map $\pi:\Sigma(T)\to M$ by $\pi(\bi_\infty)=\cap_{n\ge 1}\overline{O}_{\bi_\infty^n}$. Setting $\SS_\OO^\pi=\pi^{-1}(\SS_\OO)$ and $\partial \OO^\pi=\pi^{-1}(\partial \OO)$, it is easy to check that  the restriction
\[
\pi_*: \Sigma(T) \setminus \bigcup_{k \geq 0} \sigma^{-k}(\partial \OO^\pi) =\Sigma(T) \setminus \SS_\OO^\pi
\rightarrow M \setminus \bigcup_{k \geq 0} T^{-k} (\partial \OO)=M\setminus \SS_\OO
\]
is measurable, and bijective, with measurable inverse, and we have $\pi_*\circ \sigma=T\circ \pi_*$. 
Note that $\sigma$ is a continuous transformation of the compact metric space $\Sigma(T)$.

Next, given a function   $f:M\to \real$, we define  $f^\pi:\Sigma(T)\to \real$ by
\begin{equation}\label{rel-pot0}
f^\pi(\bi_\infty)=\lim_{m\to \infty}\inf_{O_{\bi^m_\infty}} f \, .
\end{equation} 
Then we have
\begin{equation}
f^\pi (\bi_\infty)=f(\pi(\bi_\infty)) \, ,\quad \forall \bi_\infty :  \pi(\bi_\infty)\notin \partial \OO \, .
\end{equation}
Moreover, for each $\alpha>0$ there exists $\alpha'$ such that
if $f$ is piecewise $\alpha$-H\"older, respectively continuous, on $M$
  then $f^\pi$ is $\alpha'$-H\"older,  respectively continuous, on $\Sigma(T)$.
  If $\phi :\Sigma(T)\to \real_+^*$ is continuous then 
  the topological pressure $P_{\optop}(\sigma, \log \phi)$ 
  is well-defined. If $f$ is such that $f^\pi:\Sigma(T)\to \real_+^*$ is continuous, then
\[
P_{\optop}(\sigma, \log f^\pi)=P^*_{\optop}(T, \log f)\, .
\]
 More generally, if $f_n:M\to \real_+^*$ is a  submultiplicative sequence of 
functions with each $f^\pi_n$ continuous, it is easy to see that
\begin{equation}\label{easy}
P_{\optop}(\sigma, \{\log f_n^\pi\})=P^*_{\optop}(T,\{ \log f_n\}) \, ,
\end{equation}
where the topological pressure  in the left-hand side is defined using $(n,\epsilon)$-separated
sets for continuous transformations of compact metric spaces in \cite[p.~640]{CFH}, or
in the case of left-shift $\sigma$ 
(see \cite[\S4, p.~649]{CFH})
using cylinders.

The topological entropy of $\sigma$  is finite. Thus, for  
a submultiplicative sequence  of continuous functions $\phi_n:\Sigma(T)\to\real_+$,  the variational principle
 in \cite[Thm~1.1 and \S4]{CFH} says that (the limit below exists by subadditivity)
\begin{equation}\label{var-prin-shift}
P_{\optop}(\sigma, \{\log \phi_n\})=\sup_{\mu_\sigma \in \Erg(\sigma)} \{ h_{\mu_\sigma}(\sigma)+
\lim_{n\to \infty} \frac 1n\int \log \phi_n \D\mu_\sigma\}\, .
\end{equation}

\subsection{Proof of Theorem~\ref{thm:var-princ}}\label{both}
\begin{proof}[Proof of Theorem~\ref{thm:var-princ}]
Fix $t\ge 0$. 
In  Step I, we shall prove that\footnote{This does not require small boundary pressure.}
$$\sup_{\mu\in \Erg(T)} h_{\mu}(T)+\int  \log |G| \D \mu-t\chi_{\mu}(D T)
\le\log  P^*_{\optop}(\{\log (G^{(n)}\nu_n^t)\})\, .$$
In  Step II, we shall find    $\mu_{0,t}\in \Erg(T)$ with
$$
h_{\mu_{0,t}}(T)+\int  \log |G| \D \mu_{0,t}-t\chi_{\mu_{0,t}}(D T)=\log  P^*_{\optop}(\{\log (G^{(n)}\nu_n^t)\})\, .
$$ 
Both steps will use Proposition~\ref{prop:key}.

We can assume $\inf |G|>0$ by \eqref{zeroclaim}.
We associate  the sequence of continuous functions $\{\log f_{n,t}^\pi\}$ 
to  $\{\log f_{n,t}=\log (|G^{(n)}|\nu_n^t)\}$ via \eqref{rel-pot0}.

We start with Step I.  Let $\mu \in \Erg(T)$.
Assume first that $\mu(\SS_\OO)=0$. Then,
\[\pi:(\Sigma(T), 
\mu \circ \pi, \sigma) 
\rightarrow (M, 
\mu, T) \, , \] 
(for the Borel $\sigma$ algebras of $\Sigma(T)$, $M$) is a  measure-theoretic isomorphism. Thus
 \[
 h_{\mu}(T)+\int \log |G| d\mu-t\chi_\mu(DT)=
 h_{\mu\circ \pi^{-1}}(\sigma)+\lim_n \frac 1 n\int \log f_{n,t}^\pi \D\mu\circ \pi^{-1}
 \, .\]
Next, by 
the variational principle \eqref{var-prin-shift},
\[
h_{\mu\circ \pi^{-1}}(\sigma)+ \lim_n \frac 1 n
\int \log  f_{n,t}^\pi \D\mu\circ \pi^{-1}\le P_{\optop}(\sigma,\{ \log f_{n,t}^\pi\})\, .
\]
Therefore, since $P_{\optop}(\sigma, \{\log f_{n,t}^\pi\})=P^*_{\optop}(T,\{\log f_{n,t}\})$ by \eqref{easy}, we have 
$$
 h_{\mu}(T)+
 \int \log |G| \D\mu - t\chi_\mu(DT)\le P^*_{\optop}( T, \{\log f_{n,t}\}) \, .
$$
Finally, if $\mu(\SS_\OO)>0$, then Proposition~\ref{prop:key}  gives
\begin{align*}
h_{\mu}(T)+
\int \log |G| \D\mu- t\chi_\mu(DT)&\le P^*_{\optop}(T,\{\log f_{n,t}\},\SS_\OO)
\le P^*_{\optop}(T,\{\log f_{n,t}\})\, .
\end{align*} 

We move to  Step II.
Since $\sigma$ is expansive and $\Sigma(T)$ is compact, the functions 
$$\mu_\sigma\mapsto h_{\mu_\sigma}(\sigma^n)
\mbox{ and } \mu_\sigma\mapsto  \lim_n \frac 1 n \int \log f_{n,t}^\pi\  \D\mu_\sigma
\, , \qquad \mu_\sigma \in \Erg(\sigma)
$$ 
are upper semi-continuous  (indeed $-\int \lim_n 
\frac 1 n  \log \nu_{n}^\pi \ \D\mu>0$ is lower semi-con\-tinuous, as it is
the smallest Lyapunov exponent $\chi_\mu(DT)$, see e.g. \cite[Lemma~9.1]{Via}, hence $ \lim_n 
\frac 1 n  \int \log \nu_{n}^\pi \ \D \mu<0$ and $\lim_n 
\frac 1 n  \int \log f_{n,t}^\pi\ \D \mu$ are upper semi-continuous). Therefore, there exists $\mu_{0}\in \Erg(\sigma)$ with
\begin{align*}
h_{\mu_{0}}(\sigma)+\lim_n \frac 1 n\int \log f_{n,t}^\pi \ \D\mu_{0}&=\sup_{\mu_\sigma\in \Erg(\sigma)}h_{\mu_\sigma}(\sigma)
+\lim_n \frac 1 n\int \log f_{n,t}^\pi\ \D\mu_\sigma\\
&=P_{\optop}(\sigma,\{ \log f_{n,t}^\pi\} )
\, ,
\end{align*}
where the second inequality follows from the  variational principle \eqref{var-prin-shift}. 

Setting $\mu_{0,t}=\mu_0\circ \pi^{-1}$, suppose that $\mu_{0,t}(\SS_\OO)>0$, so that $\mu_0(\pi^{-1}(\SS_\OO))>0$. Then  Proposition~\ref{prop:key} and the small boundary condition \eqref{eq:small-boundary} would imply
\begin{align*}
P_{\optop}(\sigma, \{\log f_{n,t}^\pi\})&=h_{\mu_0}(\sigma)+\lim_n \frac 1 n\int \log f_{n,t}^\pi \D\mu_0
\le P_{\optop}(\sigma,\{\log f_{n,t}^\pi\}, \pi^{-1}\partial \OO)\\
&\le P^*_{\optop}(T,\{\log f_{n,t}\}, \partial \OO)<
P^*_{\optop}(T,\{\log f_{n,t}\})\, .
\end{align*}
This would contradict $P^*_{\optop}(T,\{\log f_{n,t}\})=P_{\optop}(\sigma, \{\log f_{n,t}^\pi\})$.
Thus,  $\mu_{0,t}(\SS_\OO)=0$ and, arguing as in Step I, we have  
\begin{align*}
h_{\mu_{0,t}}(T) +\lim_n \frac 1 n\int \log f_{n,t}\D\mu_{0,t}
&=h_{\mu_0}(\sigma)+\lim_n \frac 1 n\int \log f_{n,t}^\pi \D\mu_0  \, .
\end{align*}
The right-hand side is  $P_{\optop}(\sigma,\{ \log f_{n,t}^\pi\})=P^*_{\optop}(T,\{ \log f_{n,t}\})$,  and
 we conclude. 
 \end{proof}


\end{document}